\documentclass[11pt]{article}

\usepackage[utf8]{inputenc}
\usepackage{fullpage}
\usepackage{marginnote}
\usepackage{graphicx}
\usepackage{array}
\usepackage{hyperref}
\usepackage{paralist}
\usepackage{amsthm}
\usepackage{amsmath}
\usepackage{amssymb}
\usepackage{color}
\usepackage{mathabx}

\title{A trinity of duality: non-separable planar maps, $\beta$-(1,0) trees and synchronized intervals}
\author{Wenjie Fang \\ Laboratoire de l'Informatique du Parall\'elisme, ENS de Lyon \\ wenjie.fang@ens-lyon.fr}

\newtheorem{thm}{Theorem}[section]
\newtheorem{prop}[thm]{Proposition}
\newtheorem{lem}[thm]{Lemma}
\newtheorem{coro}[thm]{Corollary}

\newcommand{\opname}[1]{\operatorname{\textup{\textsf{#1}}}}

\newcommand{\dtreeref}{\cite{JS1998bijective}}
\newcommand{\dsymref}{\cite{kitaev-involution}}
\newcommand{\dinvref}{\cite{kitaev-nsp}}
\newcommand{\tambijref}{\cite{tam-non-sep}}
\newcommand{\tutte}{\cite{Tutte:census}}
\newcommand{\kitaev}{\cite{kitaev-book}}
\newcommand{\dnsref}{\cite{kitaev2009decomp}}
\newcommand{\tamref}{\cite{PRV2014extension}}

\newcommand{\rootB}{\opname{root}}
\newcommand{\flT}{\opname{fl}}
\newcommand{\contI}{\opname{cont}}
\newcommand{\PiM}{\Pi_{\mathcal{M}}}
\newcommand{\PiB}{\Pi_{\mathcal{B}}}
\newcommand{\PiT}{\Pi_{\mathcal{T}}}
\newcommand{\PiS}{\Pi_{\mathcal{S}}}
\newcommand{\PiI}{\Pi_{\mathcal{I}}}
\newcommand{\DeltaM}{\Delta_{\mathcal{M}}}
\newcommand{\DeltaB}{\Delta_{\mathcal{B}}}
\newcommand{\DeltaT}{\Delta_{\mathcal{T}}}
\newcommand{\DeltaS}{\Delta_{\mathcal{S}}}
\newcommand{\DeltaI}{\Delta_{\mathcal{I}}}
\newcommand{\oplusM}{\oplus_{\mathcal{M}}}
\newcommand{\oplusB}{\oplus_{\mathcal{B}}}
\newcommand{\oplusT}{\oplus_{\mathcal{T}}}
\newcommand{\oplusS}{\oplus_{\mathcal{S}}}
\newcommand{\oplusI}{\oplus_{\mathcal{I}}}
\newcommand{\hdM}{\operatorname{hd}_{\mathcal{M}}}
\newcommand{\hdB}{\operatorname{hd}_{\mathcal{B}}}
\newcommand{\hdT}{\operatorname{hd}_{\mathcal{T}}}
\newcommand{\hdS}{\operatorname{hd}_{\mathcal{S}}}
\newcommand{\hdI}{\operatorname{hd}_{\mathcal{I}}}
\newcommand{\tlM}{\operatorname{tl}_{\mathcal{M}}}
\newcommand{\tlB}{\operatorname{tl}_{\mathcal{B}}}
\newcommand{\tlT}{\operatorname{tl}_{\mathcal{T}}}
\newcommand{\tlS}{\operatorname{tl}_{\mathcal{S}}}
\newcommand{\tlI}{\operatorname{tl}_{\mathcal{I}}}
\newcommand{\hT}{h_{\mathcal{T}}}
\newcommand{\rpath}{\opname{rpath}}
\newcommand{\dual}{\operatorname{dual}}
\newcommand{\mir}{\operatorname{mir}}
\newcommand{\fdeg}{\opname{fdeg}}

\newcommand{\tdef}[1]{\textcolor{blue}{\underline{{\smash{\textit{#1}}}}}}
\newcommand{\NR}{\operatorname{NR}}

\begin{document}
\maketitle

\abstract{
  The dual of a map is a fundamental construction on combinatorial maps, but many other combinatorial objects also possess their notion of duality. For instance, the Tamari lattice is isomorphic to its order dual, which induces an involution on the set of so-called ``synchronized intervals'' introduced by Pr\'eville-Ratelle and the present author. Another example is the class of $\beta$-(1,0) trees, which has a mysterious involution $h$ proposed by Claesson, Kitaev and Steingr\'imsson (2009). These two classes of combinatorial objects are all in bijection with the class of non-separable planar maps, which is closed by map duality. In this article, we show that we can identify the notions of duality in these three classes of objects using previously known natural bijections, which leads to a bijective proof of a result from Kitaev and de Mier (2013). We also discuss how various statistics are transferred by duality and by the bijections we used.
}

\section{Introduction}

Combinatorial maps (or \emph{maps} for short) are sometimes found to be in bijection with surprisingly many classes of combinatorial objects of diverse flavors. For instance, non-separable planar maps were found to be in bijection with so-called $\beta(1,0)$-trees by Cori, Jacquard and Schaeffer \cite{desc-tree, JS1998bijective}, which are in bijection with several classes of pattern-avoiding permutations (\textit{cf.} \cite[Chapter~2.2]{kitaev-book}). More recently, in \tambijref{}, a bijection between non-separable planar maps and intervals in generalized Tamari lattices was discovered. These intervals were first defined in \tamref{}, and were implicitly showed to be in bijection with a special kind of intervals in the classic Tamari lattice called ``synchronized intervals''.

In the study of maps, map duality is an important concept. It was first observed by Tutte \cite{Tutte:census} that the dual of a non-separable planar map is still non-separable, meaning that map duality is an involution in the class of non-separable planar maps. Other notions of duality can also be defined on combinatorial classes we mentioned above. For instance, given a partial order, its order dual is the reversed order relation. When defined on binary trees, it is well-known that the Tamari lattice is isomorphic to its order dual, and such an isomorphism can be given by taking the mirror image of binary trees, which induces an involution on synchronized intervals (implicitly proved in \tamref{}). For $\beta(1,0)$-trees, there is also a mysterious involution $h$ that was proposed in \cite{kitaev2009decomp} and proved to be an involution in \dsymref{}. These dualities induce equi-distribution results of various statistics (\textit{cf.} \cite{kitaev-book}).

In this paper, we will provide an integrated point of view on the duality of these three classes of objects. We show that the three dualities mentioned above are in fact related by natural bijections, thus can be regarded as the same duality in their recursive structure seen through the prism of different families of objects. To express our result more precisely, we introduce another class of objects called \emph{decorated trees}, first defined in \tambijref{}, in which they relay a natural bijection from non-separable planar maps to synchronized intervals. We now denote by $\textrm{dual}$ the map duality, $\textrm{mir}$ the duality on synchronized intervals, $h$ the involution of $\beta(1,0)$-trees in \dsymref{}, and $h_\mathcal{T}$ its counterpart in decorated trees. We prove that the diagram below commutes.

\begin{center}
  \includegraphics[page=21,scale=0.9]{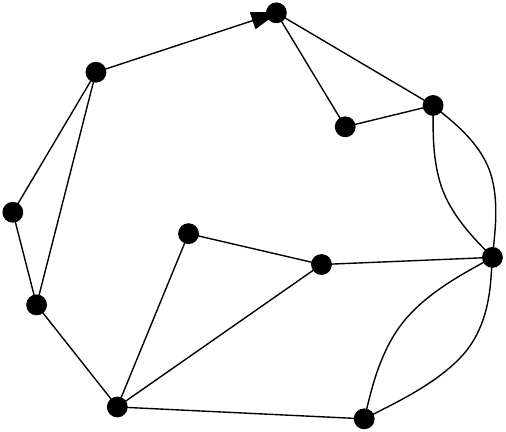}
\end{center}

Here, $\textrm{T}$ and $\textrm{I}$ are natural bijections first defined in \tambijref{}, and $\varphi_T$ is a natural bijection from decorated trees to $\beta(1,0)$-trees that will be defined later. As a consequence, we give an alternative proof of results in \dsymref{}, which states that the number of fixed points of $h$ among $\beta(1,0)$-trees with $n$ edges is equal to the number of self-dual non-separable planar maps with $n+1$ edges. The number of such maps was then given in \cite{count-self-dual}. In \dsymref{}, the authors asked for a bijective explanation, to which our proof responds. Another consequence is that the isomorphisms between different generalized Tamari lattices proved in \tamref{} can now be understood via the more intuitive map duality. We then also discuss statistics that are transferred by these bijections and dualities. Our results thus provide an integrated view of the structures of all three classes of combinatorial objects, which can be used to transfer tools in the study of one class to another and benefits the structural study of these objects.

To get the results mentioned above, we investigated recursive decompositions of all four classes of objects involved. We exhibit for each class of objects a recursive decomposition, and show that all decompositions are isomorphic to each other. In particular, for non-separable planar maps, we used here the so-called \emph{parallel decomposition}, which contrasts with the \emph{series decomposition} used heavily in previous work, such as \dtreeref{}. We then show that the bijections $\mathrm{T}, \mathrm{I}, \varphi_{\mathcal{T}}$ are all \emph{canonical bijections} with respect to these recursive decompositions, albeit the fact that they were originally defined as direct bijections. As a byproduct, we also analyze the relation between parallel and series decompositions of non-separable planar maps, and we find that they are essentially related by map duality.

Our results here are motivated by a recent trend on the bijective understanding of the relations between planar maps and intervals in Tamari-like lattices. It was first discovered by Chapoton in \cite{ch06} using generating series that the number of intervals in the Tamari lattice of order $n$ coincides with that of planar 3-connected triangulations with $3(n+1)$ edges. The same result was then proved bijectively by Bernardi and Bonichon in \cite{BB2009intervals}. Intervals in a generalization of the Tamari lattice, called the $m$-Tamari lattice, was also enumerated by Bousquet-Mélou, Fusy and Préville-Ratelle in \cite{bousquet-fusy-preville}, and the formula they found are similar to those of planar maps. This resemblance is not yet fully explained. More recently, Viennot and Préville-Ratelle proposed the generalized Tamari lattice in \cite{PRV2014extension}, and Préville-Ratelle and the present author proved intervals in all the generalized Tamari lattices of length $n$ are in bijection with non-separable planar maps with $n+2$ edges. Our result is a more fine-grained study on these relations between planar maps and intervals in Tamari-like lattices. There are other works (see \cite{chapoton-chatel-pons, interval-poset}) that relate Tamari intervals to combinatorial objects other than planar maps, which also provide insights to the structure of these intervals. Other than intervals in Tamari-like lattices, non-separable planar maps are also related to some families of pattern-avoiding permutations, which can be traced back to the work of Dulucq, Gire and West \cite{dulucq1993permutations}. This relation also motivated the study of $\beta(1,0)$-trees \dnsref{}. We hope that the structural relations we get here can lead to a transfer of results and methods between non-separable planar maps, generalized Tamari intervals and pattern-avoiding permutations.

The structure of this paper is as follows. In Section~\ref{sec:decomp}, definitions of the concerned combinatorial classes are laid out, alongside with a recursive decomposition and a notion of duality of each class. In Section~\ref{sec:nat-bij}, we prove that the natural bijections $\mathrm{T}$ and $\mathrm{I}$ defined in \tambijref{} are in fact also canonical bijections with respect to the previously stated recursive decompositions. In Section~\ref{sec:inv}, we prove that the three notions of duality in $\beta(1,0)$-trees, non-separable planar maps and Tamari intervals are the same in the light of the canonical bijections. In Section~\ref{sec:series}, we take a small detour about how another recursive decomposition of non-separable planar maps is related to the previously stated one by duality. We end this article by a discussion about how various statistics are transferred under our bijections.

\section{Recursive decompositions of related objects} \label{sec:decomp}

\subsection{Description trees}

In \dtreeref{}, to study non-separable planar maps, Jacquard and Schaeffer propose a combinatorial object that was later called $\beta(1,0)$-tree in \dnsref{}. We now pick up the formalism from \dsymref{} and \dinvref{}. A \tdef{$\beta$-(1,0) tree} is a rooted plane tree with labels on its nodes (internal ones and leaves) such that all leaves are labeled $1$, the label of the root is the sum of the labels of its children, and for other internal nodes, their labels are between $1$ and the sum of the labels of their children. Figure~\ref{fig:tree-case} gives an example of a $\beta$-(1,0) tree.

Let $\mathcal{B}_n$ be the set of $\beta$-(1,0) trees with $n$ edges, and $\mathcal{B} = \cup_{n \geq 1} \mathcal{B}_n$ the set of all $\beta$-(1,0) trees. We denote by $\opname{root}(B)$ the label of the root of a $\beta$-(1,0) tree $B$.

We now consider the following recursive construction of $\beta$-(1,0) trees. Let $B$ be a $\beta$-(1,0) tree and $v$ its root, it must be in exactly one of the following four cases (referred later as \tdef{types} of trees), as in Fig.~\ref{fig:tree-case}.
\begin{itemize}
\item \textbf{Type I}: The root $v$ has only one child, and it is a leaf.
\item \textbf{Type II}: The root $v$ has only one child, and it is an internal node.
\item \textbf{Type III}: The root $v$ has at least two children, and the leftmost one is a leaf.
\item \textbf{Type IV}: The root $v$ has at least two children, and the leftmost one is an internal node.
\end{itemize}

\begin{figure}
\centering
\includegraphics[page=8, scale=0.6]{figure-inv.pdf} \quad \quad \includegraphics[page=9]{figure-inv.pdf}
\caption{A $\beta$-(1,0) tree and four types of trees} \label{fig:tree-case}
\end{figure}

We now define some operations on $\beta$-(1,0) trees that will be used in the recursive decomposition. Let $B$ be a $\beta$-(1,0) tree and $k = \rootB(B)$. For any $1 \leq i \leq k$, we denote by $\DeltaB(B,i)$ the tree obtained by attaching the root of $B$ to a new vertex as a child and label both the old and the new root with $i$. By definition, $\DeltaB(B,i) \in \mathcal{B}$. If $B$ is of type II, we define $\PiB(B)$ as the tree obtained by removing the root and adjusting the label of the new root to the sum of labels of its children. The actions of $\DeltaB$ and $\PiB$ are illustrated on the left part of Figure~\ref{fig:dtree-fct}. The following property is clear. 

\begin{prop}\label{prop:delta-pi-b}
Let $B$ be a $\beta$-(1,0) tree, for any $1 \leq i \leq \rootB(B)$, we have $\PiB(\DeltaB(B,i)) = B$ and $\rootB(\DeltaB(B,i))=i$. Moreover, for any $\beta$-(1,0) tree $B'$ of type II such that $\PiB(B') = B$, there is a unique integer $j$ between $1$ and $\rootB(B)$ such that $B' = \DeltaB(B,j)$. 
\end{prop}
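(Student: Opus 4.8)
The plan is to verify the three assertions directly from the definitions of $\DeltaB$ and $\PiB$, since they are all elementary bookkeeping about labels and tree structure. First I would establish $\rootB(\DeltaB(B,i)) = i$: this is immediate, because $\DeltaB(B,i)$ is defined by attaching the old root of $B$ below a fresh root and labelling \emph{both} copies with $i$, so the new root carries label $i$ by construction. I should also double-check that $\DeltaB(B,i)$ is a legitimate $\beta$-(1,0) tree: its new root has a single child whose label is $i$, and $i = i$ is indeed the sum of the labels of the root's children; the old root, now an internal node, had label $\rootB(B) = k$ equal to the sum of its children's labels, and we relabel it to $i$ with $1 \le i \le k$, which still lies between $1$ and that sum; all other nodes and all leaves are untouched. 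So the validity claim, already asserted in the text ``By definition, $\DeltaB(B,i)\in\mathcal{B}$'', is consistent.

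Next I would prove $\PiB(\DeltaB(B,i)) = B$. By the previous paragraph $\DeltaB(B,i)$ is of type II (single child, which is the internal node that was the old root of $B$), so $\PiB$ applies. Applying $\PiB$ removes the new root and promotes its unique child — the old root of $B$ — back to being the root, readjusting its label to the sum of the labels of its children. But those children are exactly the children of the original root of $B$, with their original labels, so the readjusted label is $\rootB(B) = k$, and we recover $B$ exactly, labels and all.

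For the uniqueness statement, suppose $B'$ is of type II with $\PiB(B') = B$. Since $B'$ has type II, its root $v'$ has a single child $w$, which is internal. Applying $\PiB$ deletes $v'$, makes $w$ the root, and relabels $w$ with the sum $s$ of the labels of its children. So $B$ is obtained from the subtree rooted at $w$ by this relabelling; in particular the children of $w$ in $B'$, together with their subtrees and labels, coincide with the children of the root of $B$ (with subtrees and labels), and $\rootB(B) = s$. Now the constraint on $B'$ being a $\beta$-(1,0) tree forces the label of $v'$ to equal the label of its only child $w$ (a root's label is the sum of its children's labels), and the label of $w$ in $B'$ must lie between $1$ and $s = \rootB(B)$. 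Conversely, the pair (subtree at $w$, common label $j := $ label of $v'$ in $B'$) determines $B'$ completely. Setting $B' = \DeltaB(B,j)$: this matches, because $\DeltaB(B,j)$ also attaches the old root of $B$ below a new root with both labelled $j$, and the old root's subtree in $B$ is precisely the subtree rooted at $w$ with the children of $w$ restored (the only discrepancy being the label of the old root itself, which $\DeltaB$ sets to $j$ — exactly the label $w$ carries in $B'$). Hence $B' = \DeltaB(B,j)$ with $1 \le j \le \rootB(B)$, and $j$ is unique since it is forced to be the label of the root of $B'$.

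I do not anticipate a genuine obstacle here; the only point requiring a little care is keeping straight which label gets ``adjusted'' by $\PiB$ versus ``set'' by $\DeltaB$, and checking in the uniqueness argument that the type-II hypothesis plus the $\beta$-(1,0) label constraints really do pin down $B'$ from $B$ and the single integer $j$. This is routine once one tracks the label of the old root carefully.
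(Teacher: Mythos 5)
Your proof is correct and is exactly the direct verification from the definitions that the paper has in mind (the paper simply declares this proposition ``clear'' and gives no written proof). Your careful tracking of which label is set by $\DeltaB$ versus recomputed by $\PiB$, and the observation that $j$ is forced to be the root label of $B'$, fills in all the routine details correctly.
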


\begin{figure}
\centering
\includegraphics[page=10]{figure-inv.pdf}
\caption{Functions $\DeltaB, \PiB, \oplusB, \hdB$ and $\tlB$ over $\beta$-(1,0) trees} \label{fig:dtree-fct}
\end{figure}

Let $B_1, B_2$ be two $\beta$-(1,0) trees, $k = \rootB(B_1)$ and $1 \leq i \leq k$. We denote by $\oplusB(B_1, i, B_2)$ the tree obtained by identifying the two roots of $\DeltaB(B_1,i)$ and $B_2$, with the original root of $B_1$ as the leftmost child of the new root. The new root is then labeled by the sum of labels of its children. By definition, we have $\oplusB(B_1,i,B_2) \in \mathcal{B}$. Conversely, given a $\beta$-(1,0) tree $B$ of type IV, we denote by $\hdB(B)$ the sub-tree rooted at the leftmost child of the root, with root label adjusted, and $\tlB(B)$ the other parts of the tree with root label adjusted. From definition we know that $\hdB(B)$ and $\tlB(B)$ are both $\beta$-(1,0) trees. These functions are illustrated in the right part of Figure~\ref{fig:dtree-fct}. The names $\hdB$ and $\tlB$ is taken from functional programming terminology, meaning ``head'' and ``tail'' respectively of a list. The following properties of $\oplusB$, $\hdB$ and $\tlB$ are also clear.
\begin{prop}\label{prop:oplus-hd-tl-b}
For two $\beta$-(1,0) trees $B_1, B_2$ and an integer $1 \leq i \leq \rootB(B_1)$, we have $\hdB(\oplusB(B_1, i, B_2)) = B_1$, $\tlB(\oplusB(B_1, i, B_2)) = B_2$ and $\rootB(\oplusB(B_1, i, B_2)) = i + \rootB(B_2)$.

Furthermore, for any $\beta$-(1,0) tree $B'$ of type IV, if $\hdB(B') = B_1$ and $\tlB(B') = B_2$, then there is a unique integer $j$ between $1$ and $\rootB(B_1)$ such that $B' = \oplusB(B_1, j, B_2)$.
\end{prop}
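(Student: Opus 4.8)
The plan is to prove both halves of the statement by directly unwinding the definitions of $\oplusB$, $\hdB$ and $\tlB$ while keeping careful track of the label adjustments; this is the binary-operation analogue of Proposition~\ref{prop:delta-pi-b}, and no new idea is involved. First I would check that $\oplusB(B_1,i,B_2)$ is of type IV, so that $\hdB$ and $\tlB$ apply to it: writing $v$ for its root, the leftmost child of $v$ is the former root of $B_1$ (relabeled to $i$ by $\DeltaB$), which is an internal node since $B_1\in\mathcal{B}$ has at least one edge, and $v$ has at least one further child inherited from the root of $B_2$, so $v$ has at least two children with the leftmost one internal. The three identities then read off from the construction: the label of $v$ is by definition $i$ plus the sum of the labels of the children of the root of $B_2$, i.e.\ $i+\rootB(B_2)$; $\hdB$ returns the subtree hanging from the leftmost child of $v$ with its root label re-adjusted to the sum $\rootB(B_1)$ of its (unchanged) children's labels, which is exactly $B_1$; and $\tlB$ deletes that subtree, keeps $v$ with its remaining children --- precisely the children of the root of $B_2$, untouched --- and re-adjusts the label of $v$ to $\rootB(B_2)$, giving back $B_2$. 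Validity as a $\beta$-(1,0) tree is automatic at each stage, since every non-root internal node retains its label constraint and every root is relabeled to the sum of its children's labels.

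For the converse, let $B'$ be of type IV with $\hdB(B')=B_1$ and $\tlB(B')=B_2$; let $v$ be its root, $w$ its leftmost child, and $j$ the label of $w$ in $B'$. Since $\hdB$ only re-adjusts the root label, the subtree hanging from $w$ agrees with $B_1$ outside its root, and the $\beta$-(1,0) constraint of $B'$ at $w$ forces $1\le j\le\rootB(B_1)$, the right-hand side being the sum of the labels of the children of $w$. One then verifies $B'=\oplusB(B_1,j,B_2)$ directly: the leftmost child of the root of $\oplusB(B_1,j,B_2)$ is the former root of $B_1$ relabeled to $j$ carrying $B_1$'s subtrees, which matches $w$; the remaining children of the root carry the subtrees hanging from the children of the root of $B_2$, which by $\tlB(B')=B_2$ are exactly the non-leftmost subtrees of $v$; and both roots carry the sum of their children's labels. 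Uniqueness is then immediate, since by the first part the leftmost child of the root of $\oplusB(B_1,j',B_2)$ has label $j'$, so $j'=j$.

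The only real difficulty is notational: one must check that each ``root label adjusted'' appearing in the definitions of $\hdB$, $\tlB$ (and $\DeltaB$) exactly undoes the relabeling performed by $\oplusB$ (resp.\ $\DeltaB$), which is the same bookkeeping already carried out for Proposition~\ref{prop:delta-pi-b}, and is visible in Figure~\ref{fig:dtree-fct}.
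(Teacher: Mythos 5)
Your argument is correct and is exactly the definitional unwinding that the paper itself treats as immediate (it states this proposition without proof, declaring the properties ``clear''); your verification that $\oplusB(B_1,i,B_2)$ is of type IV, the label bookkeeping for the three identities, and the observation that the leftmost child's label pins down $j$ in the converse are all sound. No issues.
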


We now have the following recursive decomposition of $\beta$-(1,0) trees according to tree types.

\begin{itemize}
\item \textbf{Type I}: This is the base case of the recursive decomposition.
\item \textbf{Type II}: In this case, the function $\PiB$ applies. By Proposition~\ref{prop:delta-pi-b}, we can get all trees in this case once and only once using $\DeltaB$.
\item \textbf{Type III}: The leftmost child $\ell$ of the root $v$ is a leaf. By removing $\ell$ and decreasing the label of $v$ by $1$, we get a smaller $\beta$-(1,0) tree. We can get any $\beta$-(1,0) tree in this case by first taking an arbitrary $\beta$-(1,0) tree $B'$, then attaching a leaf to the root of $B'$ and the leftmost child and adding $1$ to the root label.
\item \textbf{Type IV}: In this case, the function $\hdB$ and $\tlB$ apply. By Proposition~\ref{prop:oplus-hd-tl-b}, we can get all trees in this case once and only once using $\oplusB$.
\end{itemize}

\subsection{Non-separable planar maps}

We now turn to non-separable planar maps. A \tdef{planar map} is an embedding of an undirected graph into the plane, with one distinguished and oriented edge adjacent to the infinite face called the \tdef{root}. As a convention, the infinite face of a planar map is called the \tdef{outer face}, and it should be on the left of the root. We call the face on the right of the root the \tdef{core face}. The starting vertex of the root is called the \tdef{root vertex}. A planar map is called \tdef{separable} if its edges can be partitioned into two sets $S,T$ such that exactly one vertex $v$ is adjacent to edges of both sets. In this case, the vertex $v$ is called a \tdef{cut vertex}. A \tdef{non-separable planar map} (or simply \tdef{NSP-map}) is a planar map with at least two edges that is not separable. We exclude the two one-edges maps here. Figure~\ref{fig:map} gives an example of a non-separable planar map.

\begin{figure}
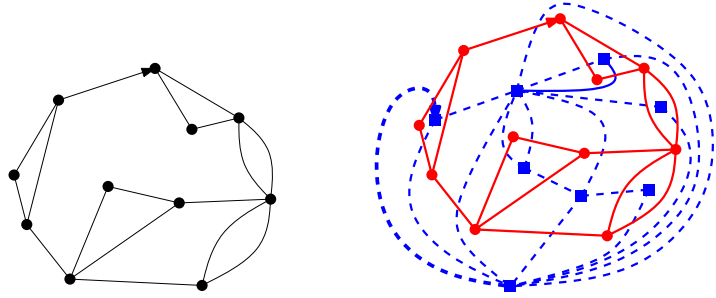

\centering
\includegraphics[page=1, scale=0.7]{figure-inv.pdf} \quad \quad \quad \includegraphics[page=5,scale=0.7]{figure-inv.pdf}
\caption{A non-separable planar map and its dual} \label{fig:map}
\end{figure}

We now consider the duality on NSP-maps. The \tdef{dual} of a planar map $M$, denoted by $\dual(M)$, is the map obtained by the following process: we first put a vertex $v_f$ inside each face $f$ of $M$, then for each edge $e$ adjacent to two (not necessarily distinct) faces $f_1,f_2$, we put an edge $e^*$ that links $v_{f_1}$ and $v_{f_2}$ which only crosses $e$ among edges in $M$ and only once. We say that $v_f$ is the \tdef{dual vertex} of $f$, and $f$ the \tdef{dual face} of $v_f$. Furthermore, the root vertex of $\dual(M)$ is the dual vertex of the outer face of $M$, and the outer face of $\dual(M)$ is the dual face of the root vertex of $M$. This definition of duality is also illustrated in Figure~\ref{fig:map}. We notice that our notion of duality is exactly the same as that in \cite{count-self-dual}. The following property was proved by Tutte in \tutte{}.

\begin{prop}[\tutte{}] \label{prop:non-sep-self-dual}
Let $M$ be an NSP-map, then $\dual(M)$ is also an NSP-map.
\end{prop}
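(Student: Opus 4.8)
The plan is to argue contrapositively: I will show that if $\dual(M)$ is separable, then $M$ itself is separable. Recall that separability of a planar map can be characterized combinatorially: a connected planar map is non-separable precisely when every face is a simple cycle (no repeated vertices on the boundary walk) and, dually, every vertex is incident to a ``simple'' rotation in the sense that the map has no cut vertex. The cleanest route is to use the following well-known equivalence for planar maps with at least two edges: $M$ is non-separable if and only if the boundary of every face of $M$ is a simple cycle. This is essentially because a cut vertex $v$ of $M$ forces some face whose boundary walk visits $v$ twice, and conversely a face boundary that revisits a vertex $v$ exhibits $v$ as a cut vertex.

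Granting this characterization, the argument becomes short. First I would recall that the faces of $\dual(M)$ correspond bijectively to the vertices of $M$, with the face of $\dual(M)$ around $v_f^{\,*}$ — i.e. the face dual to vertex $v$ of $M$ — having as its boundary walk exactly the sequence of dual edges $e^*$ crossing the edges $e$ incident to $v$, in the cyclic (rotation) order around $v$. Thus the boundary of a face of $\dual(M)$ fails to be a simple cycle exactly when two of these dual edges share an endpoint other than $v_f^*$, i.e. when two edges $e_1, e_2$ incident to $v$ in $M$ are both incident to a common face $g$ of $M$, with $g$ appearing in the rotation at $v$ in a way that pinches the face of $\dual(M)$. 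Unwinding this: a non-simple face boundary in $\dual(M)$ at the dual of vertex $v$ yields a vertex $v$ of $M$ together with a face $g$ of $M$ such that $g$'s boundary walk visits $v$ at least twice (the two ``sides'' of $g$ at $v$ being separated in the rotation at $v$). But a face boundary of $M$ visiting $v$ twice makes $v$ a cut vertex of $M$, so $M$ is separable.

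Then I would handle the symmetric direction and the bookkeeping: since $\dual(\dual(M)) = M$ (the double dual recovers $M$, including the root convention, which I should check matches the stated root/outer-face rules), the same implication applied to $\dual(M)$ gives that $M$ separable implies $\dual(M)$ separable, but in fact we only need the one direction stated. I also need to confirm the hypothesis ``at least two edges'' is preserved: $\dual(M)$ has the same number of edges as $M$, so $\dual(M)$ has at least two edges, and $\dual(M)$ is connected since $M$ is; hence the face-simplicity criterion applies to $\dual(M)$ as well.

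The main obstacle I anticipate is making the combinatorial characterization of non-separability precise and correctly dualized — in particular, carefully relating ``the boundary walk of a face of $\dual(M)$ is a simple cycle'' to ``no vertex of $M$ is a cut vertex'', including the degenerate cases (loops, multiple edges, vertices of degree two) that NSP-maps do allow. A clean way to sidestep edge cases is to phrase everything in terms of the standard bijection between corners and to invoke the known equivalence ``$M$ non-separable $\iff$ every face of $M$ is bounded by a simple cycle $\iff$ every vertex of $M$ has the property that distinct incident edge-ends see distinct faces on each side appropriately''; with that lemma in hand, the duality swap is immediate because faces of $M$ become vertices of $\dual(M)$ and vice versa, and a face-simplicity statement on one side is literally a vertex-non-cut statement on the other.
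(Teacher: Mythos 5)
The paper does not actually prove this proposition --- it is quoted from Tutte \tutte{} without proof --- so there is no in-paper argument to compare yours against; what you propose is a correct and standard self-contained route. All the weight rests on the characterization you invoke (for a connected planar map with at least two edges, non-separability is equivalent to every face boundary being a closed walk with no repeated vertex \emph{and} no repeated edge) together with the observation that the boundary of the face of $\dual(M)$ dual to a vertex $v$ is the cyclic sequence of dual edges of the edges at $v$, whose successive vertices are the duals of the faces occupying the corners at $v$. Granting those, the swap is indeed immediate: a repeated vertex on that dual face boundary means some face of $M$ occupies two distinct corners at $v$ and hence revisits $v$; a repeated edge means $M$ has a loop at $v$; a length-one boundary means $v$ is pendant --- and each of these already witnesses separability of $M$ under the edge-partition definition used here. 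Two points to tighten in a write-up: (i) the characterization must be established for maps with loops and multiple edges, not just simple $2$-connected graphs, and the direction ``a face boundary revisiting $v$ makes $v$ a cut vertex'' needs the Jordan-curve argument through the two corners of $v$ on that face, checking that both resulting regions contain at least one edge; (ii) ``simple cycle'' must explicitly exclude repeated edges, since a bridge of $M$ dualizes to a loop of $\dual(M)$ and is detected on a face boundary only by a repeated edge, not by a repeated vertex. Neither point is a gap in the idea, only in the bookkeeping you already flagged.
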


Let $\mathcal{M}_n$ be the set of NSP-maps containing $n+1$ edges, and $\mathcal{M} = \bigcup_{n \geq 1} \mathcal{M}_n$ the set of all NSP-maps. For $M$ an NSP-map and $v$ its root vertex, we define the statistics $\opname{deg}(M) = \opname{deg}(v) - 1$ as the degree of its root vertex \emph{minus} 1.

We now consider a recursive construction of NSP-maps that we call the \tdef{parallel decomposition}. Let $M$ be an NSP-map and $e=(v \to u)$ its root. By contracting $e$ to a new vertex $v'$, we get another planar map $M'$ that may be separable or contain loops. However, the only possible cut vertex of $M'$ is $v'$, since a cut vertex $w' \neq v'$ in $M'$ must also corresponds to a cut vertex in $M$. By cutting $v'$ in $M'$, we may have several non-separable components. We observe that, since $M$ is non-separable, each component must be adjacent to both $v$ and $u$ in $M$, we thus have all components separated by faces that are adjacent to both $u$ and $v$ in $M$, giving $M$ an onion-like structure. Among the components of $M'$, the \tdef{innermost component} is the one containing the core face.

The map $M$ must fall in exactly one of the following four cases (referred later as \tdef{parallel types} of NSP-maps), as illustrated in Figure~\ref{fig:map-case}.
\begin{itemize}
\item \textbf{Type I}: $M'$ has only one component, and it is a loop.
\item \textbf{Type II}: $M'$ has only one component, and it is not a loop.
\item \textbf{Type III}: $M'$ has at least two components, with the innermost one a loop.
\item \textbf{Type IV}: $M'$ has at least two components, with the innermost one not a loop.
\end{itemize}

\begin{figure}
\centering
\includegraphics[page=11, scale=0.8]{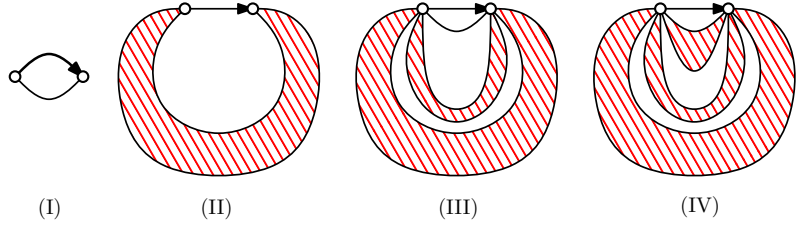}
\caption{Four cases in the parallel decomposition of NSP-maps} \label{fig:map-case}
\end{figure}

As a remark, previous studies on NSP-maps (\textit{c.f.} \cite{desc-tree, JS1998bijective}) usually use another decomposition that we call the \tdef{series decomposition} that decompose an NSP-map by deleting the root and investigating the non-separable components, which are now linked by cut vertices one by one like sausages. Figure~\ref{fig:map-decomp-compare} gives a visual comparison of these two decompositions. We will see that our parallel approach is better adapted to natural bijections. The relation between series and parallel decompositions will be discussed later.

\begin{figure}
\centering
\includegraphics[page=3, scale=0.8]{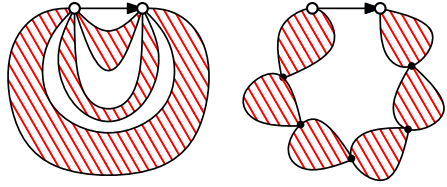}
\caption{Comparison of parallel and series decomposition of NSP-maps} \label{fig:map-decomp-compare}
\end{figure}

We now define some operations on NSP-maps that will be used in the parallel decomposition. Let $M$ be an NSP-map with the root $e=(v \to u)$ (with an orientation from $v$ to $u$). We denote by $\PiM(M)$ the map obtained by contracting $e$ and rooting at the next edge $e'$ on the outer face in clockwise order. We see that $\PiM(M)$ is non-separable if and only if $M$ is of type II. For any $1 \leq i \leq \opname{deg}(M)$, we denote by $\DeltaM(M,i)$ the map obtained by splitting the root vertex $v$ along the outer face into two new vertices $u$ and $v$, with $u$ adjacent to $e$ and $v$ of degree $i$, then adding an edge $e'$ pointing from $v$ to $u$ as the new root. The left part of Figure~\ref{fig:map-fct} illustrates the definition of these two functions. We have the following properties of these functions.

\begin{prop} \label{prop:pi-delta-m}
Let $M$ be a NSP-map, $i$ be an integer between $1$ and $\opname{deg}(M)$ and $M' = \DeltaM(M,i)$. We have $\opname{deg}(M') = i$ and $\PiM(M') = M$, and $M'$ is also an NSP-map. Furthermore, for any NSP-map $N$ such that $\PiM(N) = M$, there exists a unique integer $j$ between $1$ and $\opname{deg}(M)$ such that $N = \DeltaM(M,j)$.
\end{prop}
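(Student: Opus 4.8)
The plan is to treat $\PiM$ and the family $\big(\DeltaM(M,i)\big)_{1\le i\le \opname{deg}(M)}$ as mutually inverse geometric operations, in direct analogy with Proposition~\ref{prop:delta-pi-b}, while keeping careful track of the rotation system at the vertex that is split or merged. Write $w$ for the root vertex of $M$, so that $\opname{deg}(w)=\opname{deg}(M)+1$. Forming $N:=\DeltaM(M,i)$ amounts to cutting $w$ open along its outer-face corner into two vertices $v_N$ and $u_N$, where $v_N$ receives the first $i$ of the old edges at $w$ (clockwise from that corner) and $u_N$ the remaining $\opname{deg}(M)+1-i$ of them, and then inserting the new root edge $v_N\to u_N$ in the former outer corner. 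The constraint $1\le i\le \opname{deg}(M)$ is exactly what guarantees that each of $v_N,u_N$ retains at least one old edge, hence is not pendant; and since the new root vertex is $v_N$ with $\opname{deg}(v_N)=i+1$, this already gives $\opname{deg}(N)=i$.

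First I would check that $N$ is an NSP-map. It is planar and respects the root/outer-face convention, because the split is carried out along the outer face and the new edge is drawn in that face, and $N$ has at least three edges. For non-separability I would show that $N-c$ is connected for every vertex $c$, using that an NSP-map is connected, has no loop, and remains connected after deleting any one of its vertices. If $c$ is one of the two new vertices, say $v_N$, then $N-v_N$ consists of the connected graph $M-w$ together with $u_N$ and its (at least one) old edge joining it to $V(M)\setminus\{w\}$, hence is connected. If instead $c\in V(M)\setminus\{w\}$, then in $N-c$ the vertices $v_N$ and $u_N$ are still joined by the new root edge, so they form a single blob, and the connectedness of $M-c$ lifts to show that every vertex of $N-c$ is connected to that blob. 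Thus $N$ has no cut vertex.

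Next I would establish $\PiM(N)=M$: contracting the new root edge of $N$ re-merges $v_N$ and $u_N$ into $w$ and restores the underlying rooted map, and a short verification of the clockwise conventions shows that the edge lying one step clockwise along the outer face from the contracted edge is exactly the old root of $M$, which is the edge $\PiM$ selects as the new root. For the converse, let $N'$ be any NSP-map with $\PiM(N')=M$, with root $v'\to u'$, and put $j:=\opname{deg}(N')=\opname{deg}(v')-1$. Since $N'$ is non-separable, $\opname{deg}(v')\ge 2$ and $\opname{deg}(u')\ge 2$, which together with $\opname{deg}(M)=\opname{deg}(v')+\opname{deg}(u')-3$ gives $1\le j\le \opname{deg}(M)$. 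Contracting the root of $N'$ identifies $v'$ and $u'$ into $w$, and because this contraction followed by the rerooting yields $M$, reading the rotation at $w$ in reverse shows that the $j$ non-root edges of $v'$ occupy precisely the first $j$ positions clockwise from the outer corner of $w$. Hence the split performed by $\DeltaM(M,j)$ is inverse to the contraction of the root of $N'$, so $N'=\DeltaM(M,j)$; and distinct values of $j$ give distinct maps since $\opname{deg}(\DeltaM(M,j))=j$, which gives uniqueness.

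I expect the main obstacle to be bookkeeping rather than conceptual: pinning down the clockwise conventions so that ``contract, then reroot at the next clockwise outer edge'' (in $\PiM$) and ``split at the outer corner, first $i$ old edges to the new root vertex'' (in $\DeltaM$) are exact inverses, including the placement of the new root and the orientation $v_N\to u_N$. Once a picture of the rotation system at $w$ is fixed, together with the elementary observation that the new root edge keeps $v_N$ and $u_N$ in one piece under deletion of any other vertex, everything else is routine.
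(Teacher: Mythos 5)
Your proposal is correct and follows the same overall architecture as the paper's proof (direct verification of $\opname{deg}$ and $\PiM\circ\DeltaM=\mathrm{id}$, then the observation that planarity and the fixed outer face force any splitting of the contracted vertex to be one of the $\DeltaM(M,j)$). The one place where you genuinely diverge is the proof that $\DeltaM(M,i)$ is non-separable: the paper works straight from its edge-partition definition of separability, showing that for any partition $S\cup\{e\},T$ of the edges of $M'$ the set of vertices meeting both parts still has at least two elements, whereas you pass to the equivalent cut-vertex characterization and check that $N-c$ stays connected for every vertex $c$. Both are sound; your version is arguably more transparent geometrically (the two half-vertices are tied together by the new root edge, and each retains an old edge into $M-w$), but it silently invokes the equivalence ``connected, loopless, no cut vertex, at least two edges $\Rightarrow$ non-separable,'' so you should add the one-line remark that $N$ has no loops (the new edge joins two distinct vertices, and splitting a vertex of the loopless map $M$ cannot create a loop). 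On the other hand, your explicit verification that $j=\opname{deg}(N')$ lands in $\{1,\dots,\opname{deg}(M)\}$ via $\opname{deg}(v'),\opname{deg}(u')\ge 2$ and $\opname{deg}(M)=\opname{deg}(v')+\opname{deg}(u')-3$ is a detail the paper glosses over, and is worth keeping.
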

\begin{proof}
From the definition of $\DeltaM$ and $\PiM$, it is clear that $\opname{deg}(M')=i)$ and $\PiM(M')=M$. To prove that $M' \in \mathcal{M}$, let $e' = \{ u', v' \}$ be the new edge introduced in $M'$ that contracts to a vertex $v$ in $M$. Any partition of edges in $M'$ must take the form $S \cup \{ e \}, T$, where $S,T$ is a partition of edges in $M$. Let $V$ be the set of vertices of $M$ adjacent to edges both in $S$ and $T$, and $V'$ the same set for $S \cup \{ e \}$ and $T$ in $M'$. Since $M$ is an NSP-map, $V$ has at least two elements. If $v \notin V$, then $V \subset V'$; if $v \in V$, then one of $u',v'$ must be in $V'$, thus $|V'| \geq |V|$. Either case implies that $V'$ has at least two elements, thus $M'$ is an NSP-map.

For the second point, let $e'$ be the root of $N$. Since $\PiM(N) = M$, we can get $M$ from $N$ by contracting $e'$ to a vertex $v$ in $M$. Therefore, by splitting $v$ into two vertices linked by an edge $e$, we obtain $N$. By planarity and the fact that the outer face remains invariant, the way of splitting $v$ is totally determined by the degree of $v$ after splitting, thus exhausted by all $\DeltaM(M,j)$ with $1 \leq j \leq \opname{deg}(M)$, which implies that $N$ must also be in this form.
\end{proof}

\begin{figure}
\centering
\includegraphics[page=12, width=\textwidth]{figure-inv.pdf}
\caption{Functions $\DeltaM, \PiM, \oplusM, \hdM$ and $\tlM$ over NSP-maps} \label{fig:map-fct}
\end{figure}

Let $M_1, M_2$ be two NSP-maps, and $1 \leq i \leq \opname{deg}(M_1)$. We denote by $\oplusM(M_1,i,M_2)$ the map obtained by putting $\DeltaM(M_1,i)$ in the core face of $B_2$ and identifying the two roots. Conversely, given an NSP-map $M$ of type IV, we define $\hdM(M)$ to be the innermost non-separable component of $\PiM(M)$ (with the two vertices of the root of $M$ identified), and $\tlM(M)$ the map that remains after removing $\hdM(M)$ from $M$. Both $\hdM(M)$ and $\tlM(M)$ are NSP-maps. The right part of Figure~\ref{fig:map-fct} illustrates the definitions of $\oplusM, \hdM$ and $\tlM$. We have the following properties.

\begin{prop} \label{prop:oplus-hd-tl-m}
For two NSP-maps $M_1, M_2$ and an integer $i$ between $1$ and $\opname{deg}(M_1)$, the map $M = \oplusM(M_1, i, M_2)$ is also an NSP-map, and $\hdM(M) = M_1$, $\tlM(M) = M_2$ and $\opname{deg}(M) = i + \opname{deg}(M_2)$.

Furthermore, for any NSP-map $M'$ of type IV, if $\hdM(M') = M_1$ and $\tlM(M') = M_2$, then there exists a unique integer $j$ between $1$ and $\opname{deg}(M_1)$ such that $M' = \oplusM(M_1, j, M_2)$.
\end{prop}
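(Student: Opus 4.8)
The plan is to mirror the proof of Proposition~\ref{prop:pi-delta-m}, exploiting the fact that $\oplusM$ is built from $\DeltaM$ plus the operation of inserting a rooted non-separable map into the core face of another and identifying roots. First I would check non-separability of $M = \oplusM(M_1, i, M_2)$. We know from Proposition~\ref{prop:pi-delta-m} that $\DeltaM(M_1, i)$ is an NSP-map, and $M_2$ is an NSP-map by hypothesis. The map $M$ is obtained by gluing the root edge of $\DeltaM(M_1, i)$ onto the root edge of $M_2$, the former sitting inside the core face of the latter; concretely this identifies the two endpoints of the respective roots. The only candidate cut vertex is one of the two identified root vertices, since any cut vertex not touching the identification would already be a cut vertex of $M_1$, of $M_2$, or of $\DeltaM(M_1,i)$. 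So I would take an arbitrary edge partition $S, T$ of $M$, restrict it to $M_2$ and to $\DeltaM(M_1,i)$ (each gets a partition, possibly trivial), and argue exactly as in Proposition~\ref{prop:pi-delta-m} that at least two vertices straddle both parts; the onion-like structure observed earlier guarantees that the core face of $M_2$ (now hosting $\DeltaM(M_1,i)$) is bounded by both endpoints of the root of $M_2$, so the gluing cannot create a separation.

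Next I would verify $\hdM(M) = M_1$, $\tlM(M) = M_2$, and $\opname{deg}(M) = i + \opname{deg}(M_2)$. The degree count is immediate: $\DeltaM(M_1,i)$ has root-vertex degree $i+1$, i.e.\ $\opname{deg} = i$ in the paper's convention, and after gluing onto the root of $M_2$ the root vertex $v$ of $M$ collects the $i$ non-root edges from the $M_1$-side together with the $\opname{deg}(M_2)$ non-root edges from the $M_2$-side, giving $\opname{deg}(M) = i + \opname{deg}(M_2)$. For the identification of $\hdM$ and $\tlM$, I would trace through the definition: contracting the root of $M$ yields $\PiM(M)$, which separates at the contracted vertex into the onion layers; the innermost component (the one containing the core face of $M$, which is the core face of $M_1$ by construction) is exactly $M_1$ with its root label adjusted, so $\hdM(M) = M_1$; removing this innermost component leaves precisely $M_2$, so $\tlM(M) = M_2$. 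Here one should check that the glued copy of $\DeltaM(M_1,i)$ becomes, after contraction and identification of the two root endpoints, a single non-separable component equal to $M_1$ — this is where Proposition~\ref{prop:pi-delta-m} is used, since $\hdM$ is defined via $\PiM$ and re-identification.

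For the converse, let $M'$ be of type IV with $\hdM(M') = M_1$ and $\tlM(M') = M_2$. By definition of the parallel decomposition and of $\hdM, \tlM$, the map $M'$ is obtained from $M_2$ by inserting some non-separable map into its core face and identifying roots, where that inserted map, after contracting the root of $M'$ and identifying the two root endpoints, becomes $M_1$. By Proposition~\ref{prop:pi-delta-m} applied to $M_1$, the inserted map is exactly $\DeltaM(M_1, j)$ for a unique $j$ with $1 \leq j \leq \opname{deg}(M_1)$ — this is the statement that the only maps contracting-and-reidentifying to $M_1$ are the $\DeltaM(M_1, j)$. Hence $M' = \oplusM(M_1, j, M_2)$ for this unique $j$, and uniqueness of $j$ follows from the uniqueness clause in Proposition~\ref{prop:pi-delta-m} together with $\opname{deg}(\hdM(M')) = \opname{deg}(M_1)$ pinning the range.

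The main obstacle I expect is the careful bookkeeping around the onion-like structure in the non-separability argument: making rigorous the claim that the core face of $M_2$ is bounded by both endpoints of its root, so that planting $\DeltaM(M_1,i)$ there and identifying roots produces no cut vertex, and that the resulting innermost component is genuinely $M_1$ and not something larger. This is essentially a planarity-plus-connectivity argument, and the cleanest route is to reduce everything to Proposition~\ref{prop:pi-delta-m} via $\PiM$, so that the only genuinely new content is the observation that inserting a map into the core face respects the layered decomposition — which follows because the core face of the result is the core face of the inserted map, placing the inserted map strictly innermost.
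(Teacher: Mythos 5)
Your proposal is correct and follows essentially the same route as the paper: non-separability of $\oplusM(M_1,i,M_2)$ is verified by restricting an arbitrary edge partition to the two constituent pieces and invoking their non-separability, with the case where the partition aligns with the two pieces handled by the observation that both endpoints of the root are adjacent to edges from each piece. The converse is reduced to Proposition~\ref{prop:pi-delta-m} via $\PiM$ applied to the inserted component, exactly as in the paper's proof.
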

\begin{proof}
We first prove that $M \in \mathcal{M}$. Let $S,T$ be a partition of edges in $M$, and $e=\{ u, v \}$ its root. By construction of $M$, we know that all edges in $M$ originate either from $M_1$ or from $M_2$. We consider the set of edges $E_2$ originating from $M_2$. If neither $S \cap E_2$ nor $T \cap E_2$ is empty, since $M_2$ is an NSP-map, $S \cap E_2$ and $T \cap E_2$ must have at least two common adjacent vertices, thus $S$ and $T$ as well. The case where neither $S \cap E_1$ nor $T \cap E_1$ can be treated similarly. The only case left to check is $S=E_1$ and $T=E_2$, or symmetrically $S=E_2$ and $T=E_1$, but in this case both $u$ and $v$ are adjacent to $S$ and $T$. Therefore, for any partition $S,T$, there are at least two vertices adjacent to both edges in $S$ and in $T$. We conclude that $M$ is an NSP-map. The equalities follow easily from the construction.

For the second point, by duplicating the root, we can separate $M'$ into two maps, one isomorphic to $M_2$, the other, denoted by $M_1'$, corresponds to the rest. By the definition of $\hdM$, we have $\PiM(M_1') = \hdM(M') = M_1$. We then conclude by Proposition~\ref{prop:pi-delta-m}.
\end{proof}

We now consider the parallel decomposition in each case.
\begin{itemize}
\item \textbf{Type I}: This is the base case, which is a map of a pair of double edges.
\item \textbf{Type II}: In this case, the function $\PiM$ applies. By Proposition~\ref{prop:pi-delta-m}, all NSP-maps in this case can be obtained bijectively by applying $\DeltaM$ to NSP-maps with all possible extra parameters.
\item \textbf{Type III}: Let $e_1$ be the edge in $M$ corresponding to the loop. By deleting $e_1$ in $M$, we get a smaller NSP-map. To obtain an NSP-map $M$ in this case, we take an arbitrary NSP-map $M'$ and duplicate its root, and we have $\opname{deg}(M) = \opname{deg}(M') + 1$.
\item \textbf{Type IV}: In this case, the functions $\hdM$ and $\tlM$ apply. By Proposition~\ref{prop:oplus-hd-tl-m}, all NSP-maps in this case can be obtained bijectively by applying $\oplusM$ to NSP-maps with all possible extra parameters.
\end{itemize}

This recursive construction of NSP-maps is isomorphic to that of $\beta$-(1,0) tree, their size parameters coincide, and the statistics $\opname{deg}$ and $\opname{root}$ coincide too. We define the \tdef{canonical bijection} $\phi_M$ from the set of NSP-map $\mathcal{M}$ to the set of $\beta$-(1,0) trees $\mathcal{B}$ by the following recursive definition on the four types of NSP-maps. Let $M$ be an NSP-map, we define recursively $\phi_M(M)$ according to its type as follows.

\begin{itemize}
\item \textbf{Type I}: $\phi_M(M)$ is the type I $\beta$-(1,0) tree.
\item \textbf{Type II}: We have $M = \DeltaM(M', i)$, and we define $\phi_M(M) = \DeltaB(\phi_M(M'),i)$.
\item \textbf{Type III}: Let $M'$ be the map obtained by deleting the edge corresponding to the loop. We define $\phi_M(M)$ be the type III $\beta$-(1,0) tree obtained from $\phi_M(M')$.
\item \textbf{Type IV}: We have $M = \oplusM(M_1, i, M_2)$, and we define $\phi_M(M) = \oplusB(\phi_M(M_1), i, \phi_M(M_2))$.
\end{itemize}

\subsection{Decorated trees}

In \tambijref{}, the authors defined a family of labeled plane trees called \emph{decorated trees}, which is related to NSP-maps. Here, we take the convention that the root of a tree has depth $0$. For a vertex in a tree, its \tdef{direct sub-trees} are sub-trees induced by one of its children. The \tdef{traversal order} of leaves in a tree is the order induced by the prefix traversal of the tree. A \tdef{decorated tree} is a rooted plane tree where each leaf has a label at least $-1$, satisfying the following conditions.
\begin{enumerate}
\item The label of a leaf must be strictly less than the depth of its parent.
\item For each internal node with depth $p>0$, it has at least one descendant leaf with a label at most $p-2$.
\item Let $t$ be an internal node of depth $p$ and $T'$ one of its direct sub-trees. For any leaf $\ell$ in $T'$ whose label is the same as the depth $p$ of $t$, leaves in $T'$ coming before $\ell$ in traversal order have labels at least $p$.
\end{enumerate}

The right side of Figure~\ref{fig:bij-T} is an example of a decorated tree. We denote by $\mathcal{T}_n$ the set of decorated trees with $n$ edges (internal and external), and $\mathcal{T} = \bigcup_{n \geq 1} \mathcal{T}_n$ the set of all decorated trees. A leaf with label $-1$ is called a \tdef{free leaf}, and we denote by $\flT(T)$ the number of free leaves of a decorated tree $T$. By considering the first condition of decorated trees on nodes of depth $1$, we know that every decorated tree containing an internal node other than the root has at least one free leaf. This also applies to decorated trees without any internal node other than the root, where all leaves must be free leaves.

Much alike previous objects, decorated trees has a recursive decomposition in four cases exactly like that of $\beta$-(1,0) trees, illustrated in Figure~\ref{fig:tree-case}. We now define some operations on decorated trees that will be used in the recursive decomposition.

Let $T$ be a decorated tree and $k = \flT(T)$. For any $1 \leq i \leq k$, we denote by $\DeltaT(T,i)$ the tree obtained by attaching $T$ to a new vertex $v$ as a direct sub-tree, adding $1$ to each leaf except for the last $i$ free leaves. If $T$ is of type II, we define $\PiT(T)$ as the tree obtained by removing the root and subtracting $1$ from each leaf label, except free leaves. These functions are illustrated in the left part of Figure~\ref{fig:deco-fct}. We have the following properties.

\begin{prop} \label{prop:pi-delta-t}
Let $T$ be a decorated tree and $k=\flT(T)$. For any integer $i$ between $1$ and $k$, the tree $\DeltaT(T,i)$ is a decorated tree, and we have $\PiT(\DeltaT(T,i)) = T$ and $\flT(\DeltaT(T,i)) = i$. Moreover, for any decorated tree $T'$ of type II, $\PiT(T')$ is also a decorated tree, and if $\PiT(T') = T$, there exists a unique integer $j$ between $1$ and $k$ such that $T' = \DeltaT(T,j)$.
\end{prop}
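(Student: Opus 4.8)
The plan is to verify the several assertions by tracking the three defining conditions~(1),(2),(3) of a decorated tree through the operations $\DeltaT$ and $\PiT$; the only non-routine ingredient is the way condition~(3) interacts with the relabellings. First I would show that $S = \DeltaT(T,i)$ is a decorated tree. Every depth in $S$ is one larger than the corresponding depth in $T$; the last $i$ free leaves of $T$ keep label $-1$ in $S$, the first $k-i$ free leaves acquire label $0$, and every non-free leaf has its label increased by $1$. Condition~(1) is immediate: a leaf whose parent has depth $p$ in $T$ (so its $T$-label is $<p$, with $p\ge1$ unless the leaf is free) has a parent of depth $p+1$ in $S$ while its label rises by at most $1$. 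For condition~(2): the new root of $S$ is at depth $0$ and is exempt; the old root of $T$, now at depth $1$, has among its descendants the last $i$ free leaves (non-empty since $i\ge1$), giving a descendant of label $-1$; and any other internal node, at depth $p\ge1$ in $T$ hence $p+1$ in $S$, had a descendant leaf of $T$-label $\le p-2$, which in $S$ has label $\le p-1=(p+1)-2$. For condition~(3) the observation to exploit is that in $T$ the condition already forces every leaf preceding a ``critical'' leaf — one whose label equals the depth $q$ of the ancestor under consideration — to have label at least $q\ge0$, hence to be non-free; so after the uniform $+1$ shift these leaves still have label at least $q+1$, the shifted critical depth. The only critical leaves newly appearing in $S$ lie under the new root at depth $0$ and are exactly the first $k-i$ free leaves (of label $0$ in $S$); a leaf preceding such a leaf is either non-free (label $\ge1$ in $S$) or an earlier member of the first $k-i$ free leaves (label $0$), so condition~(3) holds there too.

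Next, since the root of $S=\DeltaT(T,i)$ has the old root of $T$ — an internal node — as its unique child, $S$ is of type~II and $\PiT$ applies to it; undoing the depth shift and subtracting $1$ from every non-free leaf sends the last $i$ free leaves to themselves, the first $k-i$ leaves of label $0$ back to free leaves, and every non-free leaf back by $1$, so $\PiT(\DeltaT(T,i))=T$; and the free leaves of $S$ are precisely the last $i$ free leaves of $T$, whence $\flT(\DeltaT(T,i))=i$.

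For the ``moreover'' part, I would first check that $\PiT(T')$ is a decorated tree by running the same bookkeeping in reverse: depths drop by $1$, non-free labels drop by $1$, and some leaves of label $0$ in $T'$ become free (which is allowed), so that conditions~(1)--(3) for $\PiT(T')$ follow from those for $T'$. Now assume $\PiT(T')=T$ with $k=\flT(T)$. Because $T'$ is of type~II, its underlying plane tree coincides with that of $\DeltaT(T,j)$ for every $j$, so only the leaf labels are at issue. The relation $\PiT(T')=T$ forces each non-free leaf of $T$ to correspond to a leaf of $T'$ of label equal to its $T$-label plus $1$ (hence $\ge1$), and each free leaf of $T$ to correspond to a leaf of $T'$ of label $-1$ or $0$. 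Condition~(3) at the root of $T'$ (depth $0$) asserts that no leaf of label $-1$ precedes a leaf of label $0$; hence, among the $k$ leaves of $T'$ corresponding to free leaves of $T$, those of label $0$ form an initial segment, say of size $k-j$, and those of label $-1$ a final segment of size $j$ — exactly the label pattern of $\DeltaT(T,j)$, so $T'=\DeltaT(T,j)$. Finally $j\ge1$ because condition~(2) applied to the unique child of the root of $T'$, an internal node of depth $1$, produces a descendant leaf of label $-1$, so $T'$ has at least one free leaf; $j\le k$ is clear; and uniqueness follows from $\flT(\DeltaT(T,j))=j$.

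The delicate point throughout is condition~(3): one must notice that it is self-reinforcing — the leaves it constrains are automatically non-free — so that the uniform $\pm1$ relabellings preserve it, and that, read in reverse, it is exactly what pins down the threshold $j$ in the reconstruction $T'=\DeltaT(T,j)$. Everything else is routine bookkeeping on depths and labels.
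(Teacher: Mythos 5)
Your proof is correct and follows essentially the same route as the paper's: verify conditions (1)--(3) through the $\pm 1$ relabellings for both $\DeltaT$ and $\PiT$, and use condition (3) at the root (no free leaf precedes a label-$0$ leaf) together with condition (2) at the depth-$1$ node to pin down the unique $j$ with $T'=\DeltaT(T,j)$. You simply spell out in more detail the points the paper treats as immediate.
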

\begin{proof}
We first prove that $\DeltaT(T,i) \in \mathcal{T}$ by checking all the conditions. By construction, comparing to $T$, the depth of every node and every leaf label are both incremented by $1$, except for some free leaves. Therefore, since $T$ is a decorated tree, the first condition is satisfied by all leaves since free leaves satisfy it automatically. For the second condition, we only need to check the unique child $u$ of the root. Since $u$ is of depth $1$, the second condition on $u$ is satisfied by the existence of $i \geq 1$ free leaves. For the third condition, the only possible violation is that a free leaf precedes a leaf with label $0$, which is impossible by construction. Therefore, $\DeltaT(T,i) \in \mathcal{T}$. From construction, we know that $\PiT(\DeltaT(T,i)) = T$ and $\flT(\DeltaT(T,i)) = i$.

We now prove that $\PiT(T') \in \mathcal{T}$. Comparing to $T'$, the depth of every node and the label of every leaf are both decremented by $1$, except for free leaves. Therefore, since $T'$ is a decorated tree, all conditions are automatically satisfied, thus $\PiT(T') \in \mathcal{T}$. For $\PiT(T')=T$, to go from $T$ to $T'$, the only ambiguity lies in the free leaves, since some free leaves of $T$ can get a label $0$ in $T'$. However, due to the third condition, all leaves with label $0$ in $T'$ must precede free leaves in traversal order, and there is at least one free leaf. Therefore, $T'$ must be of the form $\DeltaT(T,j)$.
\end{proof}

\begin{figure}
\centering
\includegraphics[page=14, width=\textwidth]{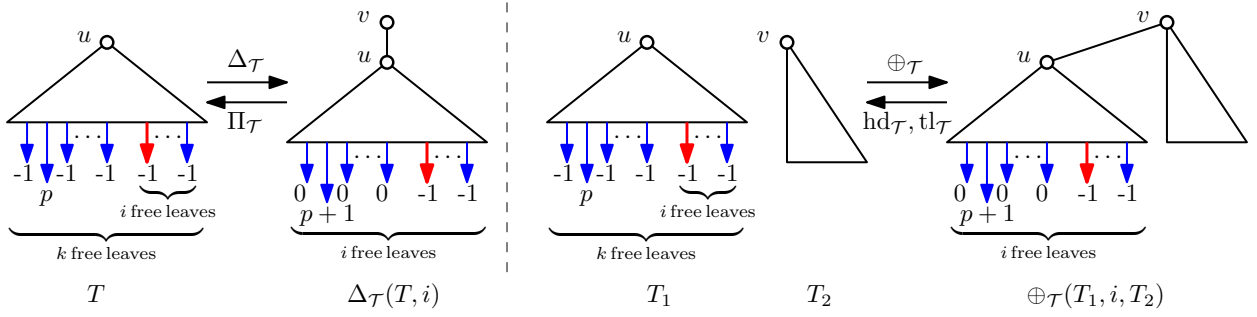}
\caption{Functions related to the recursive decomposition of decorated trees} \label{fig:deco-fct}
\end{figure}

Let $T_1, T_2$ be two decorated trees, and $1 \leq i \leq \flT(T_1)$. We denote by $\oplusT(T_1,i,T_2)$ the decorated tree obtained by identifying the roots of $\DeltaT(T_1, i)$ and $T_2$, from left to right. Conversely, given a decorated tree $T$ of type IV, we define $\hdT(T)$ and $\tlT(T)$ as follows: we first split the root to get two trees, one (denoted by $T_{hd}$) that contains the leftmost direct sub-tree, the other (denoted by $T_{tl}$) being the remainder, and we define $\hdT(T) = \PiT(T_{hd})$ and $\tlT(T)=T_{tl}$. We observe that both $T_{hd}$ and $T_{tl}$ satisfy all conditions of decorated trees, and by Proposition~\ref{prop:pi-delta-t}, we know that $\hdT(T)$ and $\tlT(T)$ are decorated trees. We also easily verify that $\tlT(T)$ is also a decorated tree. These functions are illustrated on the right side of Figure~\ref{fig:deco-fct}. We have the following properties.

\begin{prop} \label{prop:oplus-hd-tl-t}
For two decorated trees $T_1, T_2$ and an integer $i$ between $1$ and $\flT(T_1)$, the tree $T = \oplusT(T_1, i, T_2)$ is also a decorated tree, with $\hdT(T) = T_1$, $\tlT(T)=T_2$ and $\flT(T) = i + \flT(T_2)$.

Furthermore, for any decorated tree $T'$ of type IV, if $\hdT(T') = T_1$ and $\tlT(T') = T_2$, then there exists a unique integer $j$ between $1$ and $\flT(T_1)$ such that $T' = \oplusT(T_1,j,T_2)$.
\end{prop}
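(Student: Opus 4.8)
The plan is to mirror the proof of Proposition~\ref{prop:oplus-hd-tl-t}'s analogues, namely Propositions~\ref{prop:oplus-hd-tl-b} and~\ref{prop:oplus-hd-tl-m}, but using the structural facts we have already established for decorated trees, most importantly Proposition~\ref{prop:pi-delta-t}. First I would verify that $T = \oplusT(T_1, i, T_2)$ is a decorated tree. By construction, $T$ is obtained by identifying the root of $\DeltaT(T_1,i)$ with the root of $T_2$, placing the unique direct sub-tree of $\DeltaT(T_1,i)$ as the leftmost direct sub-tree of the new root. Since Proposition~\ref{prop:pi-delta-t} already tells us $\DeltaT(T_1,i) \in \mathcal{T}$, and the root of a decorated tree imposes no label constraint (only conditions on internal nodes of positive depth and on leaves), the three defining conditions for $T$ reduce to conditions already known to hold in $\DeltaT(T_1,i)$ and in $T_2$ separately: condition~1 and condition~2 are local to subtrees hanging below the root and are unaffected by merging roots; condition~3 applied at the new root is vacuous because a root has depth $0$ and no leaf can have label $-2$ or smaller. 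So $T \in \mathcal{T}$.

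Next I would check the three equalities. For $\flT(T) = i + \flT(T_2)$: the free leaves of $T$ are exactly the free leaves coming from the $\DeltaT(T_1,i)$ side together with those coming from the $T_2$ side; by Proposition~\ref{prop:pi-delta-t} the former number is $\flT(\DeltaT(T_1,i)) = i$, and merging the roots does not change any leaf labels, so the latter number is $\flT(T_2)$. For $\tlT(T) = T_2$: splitting the root of $T$ into $T_{hd}$ (carrying the leftmost direct sub-tree) and $T_{tl}$ (the remainder) recovers, on the $T_{tl}$ side, exactly the tree $T_2$, since that is precisely the part that came from $T_2$ and no label on it was modified; hence $\tlT(T) = T_{tl} = T_2$. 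For $\hdT(T) = T_1$: the split gives $T_{hd} = \DeltaT(T_1,i)$ (a type~II decorated tree, since its root has a single direct sub-tree), and then $\hdT(T) = \PiT(T_{hd}) = \PiT(\DeltaT(T_1,i)) = T_1$ by Proposition~\ref{prop:pi-delta-t}.

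For the second paragraph of the statement — the ``converse'' — let $T'$ be any decorated tree of type IV with $\hdT(T') = T_1$ and $\tlT(T') = T_2$. Split the root of $T'$ into $T'_{hd}$ and $T'_{tl}$ as in the definition; by definition $T'_{tl} = \tlT(T') = T_2$ and $T'_{hd}$ is a type~II decorated tree with $\PiT(T'_{hd}) = \hdT(T') = T_1$. By the uniqueness part of Proposition~\ref{prop:pi-delta-t}, there is a unique integer $j$ with $1 \le j \le \flT(T_1)$ such that $T'_{hd} = \DeltaT(T_1,j)$. Reassembling, $T'$ is obtained by identifying the root of $\DeltaT(T_1,j)$ with the root of $T_2$ in the prescribed left-to-right fashion, which is exactly $\oplusT(T_1,j,T_2)$; and $j$ is unique because it is determined by $T'_{hd}$, which is itself determined by $T'$.

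I do not expect any genuine obstacle here: the statement is the decorated-tree counterpart of Propositions~\ref{prop:oplus-hd-tl-b} and~\ref{prop:oplus-hd-tl-m} and everything follows formally from Proposition~\ref{prop:pi-delta-t}. The only point requiring a little care is the verification that merging roots preserves the three defining conditions of decorated trees — specifically that condition~3 at the new root is vacuous and that conditions~1 and~2 at all other nodes are inherited unchanged from the two pieces — but this is the same bookkeeping already carried out in the proof of Proposition~\ref{prop:pi-delta-t}, so it amounts to a brief remark rather than new work.
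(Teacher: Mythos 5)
Your proposal follows the same route as the paper: reduce everything to Proposition~\ref{prop:pi-delta-t} by viewing $T$ as $\DeltaT(T_1,i)$ and $T_2$ glued at the root; the three equalities and the converse/uniqueness argument are argued exactly as in the paper's proof and are fine.

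One justification is wrong, though the conclusion it supports is still true. You claim that condition~3 at the new root is vacuous ``because a root has depth $0$ and no leaf can have label $-2$ or smaller.'' You appear to be reading condition~2 here (which indeed does not apply to the root, both because it is restricted to depth $p>0$ and because a label $\le -2$ is impossible). Condition~3 at the root, with $p=0$, says that in each direct sub-tree of the root, no leaf labeled $0$ may be preceded in traversal order by a leaf of smaller label, i.e.\ by a free leaf; leaves labeled $0$ and free leaves both occur, so this is a genuine constraint --- it is precisely the constraint that makes the uniqueness part of Proposition~\ref{prop:pi-delta-t} work. The correct reason it survives the gluing is the one the paper gives: condition~3 only compares leaves lying in a \emph{single} direct sub-tree of a node, and every direct sub-tree of the new root of $T$ is carried over unchanged from either $\DeltaT(T_1,i)$ or $T_2$, both of which are decorated trees. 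With that substitution your proof is complete.
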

\begin{proof}
We first prove that $T \in \mathcal{T}$. We already know from Proposition~\ref{prop:pi-delta-t} that $\DeltaT(T_1,i)$ is a decorated tree. The only condition of decorated trees to check on $T$ is the third, but since it only concerns one direct sub-tree of an internal node, the fact that both $\DeltaT(T_1,i)$ and $T_2$ are decorated trees implies that the third condition is also satisfied by $T$, thus $T \in \mathcal{T}$. From the construction, we have $\hdT(T) = T_1$, $\tlT(T)=T_2$ and $\flT(T) = i + \flT(T_2)$.

For the second point, we can split $T'$ into $T'_{hd}$ and $T'_{tl}$ as in the definition of $\tlT$. We then have $T'_{tl} = T_2$ and $\PiT(T'_{hd}) = \hdT(T') = T_1$, and the result follows from Proposition~\ref{prop:pi-delta-t}.
\end{proof}

We now consider the recursive decomposition of decorated trees of each tree type.
\begin{itemize}
\item \textbf{Type I}: This is the base case, where $\flT(T)=1$.
\item \textbf{Type II}: The function $\PiT$ applies in this case. By Proposition~\ref{prop:pi-delta-t}, all decorated trees of type II can be obtained once and only once using $\DeltaT$.
\item \textbf{Type III}: Let $\ell$ be the leftmost child of $v$, which is a leaf. By the first condition of decorated trees, $\ell$ must be a free leaf. By removing $\ell$, we get a smaller decorated tree $T'$ with $\flT(T') = \flT(T)-1$, satisfying all conditions. Every decorated tree of type III can be obtained by attaching a free leaf on the left side of the root of a decorated tree.
\item \textbf{Type IV}: In this case, the functions $\hdT$ and $\tlT$ apply. By Proposition~\ref{prop:oplus-hd-tl-t}, all decorated trees of type IV can be obtained once and only once using $\oplusT$.
\end{itemize}

We observe that this recursive construction is isomorphic to that of $\beta$-(1,0) trees, with the statistic $\flT$ corresponding to $\rootB$. We thus define the canonical map $\phi_T$ from the set of decorated trees $\mathcal{T}$ to the set of $\beta$-(1,0) trees $\mathcal{B}$ by identifying all four cases of recursive decomposition. Figure~\ref{fig:decotree} gives an example of the image of a decorated tree by $\phi_T$.

\begin{figure}
\centering
\includegraphics[page=7, scale=0.7]{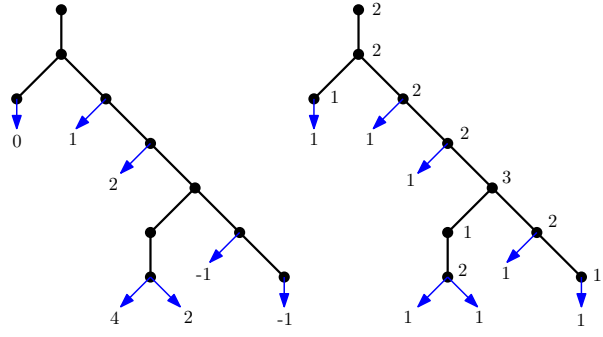}
\caption{A decorated tree $T$ and its corresponding $\beta$-(1,0) tree $\phi_T(T)$} \label{fig:decotree}
\end{figure}

\subsection{Synchronized intervals}

A \tdef{Dyck path} is a non-empty path on $\mathbb{Z}^2$ consisting of up steps $u=(1,1)$ and down steps $d=(-1,1)$ that starts from the origin, ends on the $x$-axis and always stays above the $x$-axis. We can also see a Dyck path as a word in the alphabet $\{ u,d \}$. It is clear that a Dyck path always consists of an even number of steps, and it has as many up steps as down steps. The \tdef{size} of a Dyck path is defined as the number of steps divided by $2$. A Dyck path of size $n$ thus has $n$ up steps and $n$ down steps. Let $D$ be a Dyck path of size $n$, and $i_1, i_2, \ldots, i_n$ be the indices in $D$ such that $\ell_{i_k}=u$ for all $k$. The type $\opname{Type}(D)$ of $D$ is a word $w$ in the alphabet $\{ N,E \}$ of length $n-1$, such that, for any $k$ from $1$ to $n-1$, $w_k=E$ if $i_{k+1}=i_k+1$, and $w_k=N$ otherwise. In other words, $w_k=E$ if and only if the $k^{\rm{th}}$ up step of $D$ is followed by an up step. 

The well-known Tamari lattice can be defined on Dyck paths. Let $\mathcal{D}_n$ be the set of Dyck paths of size $n$, and $\mathcal{D} = \cup_{n \geq 0} \mathcal{D}_n$ be the set of all Dyck paths (including the empty path). We now define a covering relation on $\mathcal{D}_n$. Let $D$ be an element of $\mathcal{D}_n$ that can be written (as a word) as $D = V d D_1 W$, where $V,W$ are words in letters $u,d$ and $D_1$ is a strictly smaller Dyck path. We then construct $D' = V D_1 d W$, which is also a Dyck path, and we say that $D$ covers $D'$. The \tdef{Tamari lattice} $(\mathcal{D}_n, \preceq)$ of order $n$ is given by the transitive closure $\preceq$ of the covering relation we just defined. Readers are referred to \tamref for a more detailed description of the Tamari lattice.

An \tdef{interval} in the Tamari lattice $(\mathcal{D}_n, \preceq)$ of order $n \geq 1$ is simply a pair of comparable Dyck paths $[D_1, D_2]$ with $D_1 \preceq D_2$. We observe here that the empty Dyck path is not allowed in any interval. An interval $(D_1, D_2)$ is called \tdef{synchronized} if $D_1$ and $D_2$ are of the same type. We denote by $\mathcal{I}_n$ the set of synchronized intervals in the Tamari lattice of order $n$, and $\mathcal{I} = \cup_{n\geq1}\mathcal{I}_n$ the set of all synchronized intervals. Synchronized intervals are in bijection with intervals in the so-called ``generalized Tamari lattices'' (\textit{cf.} \tamref). In \tambijref, a bijection between synchronized intervals and decorated trees was established, which motivates our study of synchronized intervals here. 

We now investigate a recursive decomposition of synchronized intervals. A \tdef{contact} of a Dyck path $D$ is a lattice point of $D$ that is also on the $x$-axis. For example, the Dyck path $uduududduudd$ has $4$ contacts. For a synchronized interval $I = [P,Q]$, we denote by $\contI(I)$ the number of contacts of the smaller path $P$ \emph{minus 1}. We can also say that the initial contact $(0,0)$ of $P$ is ignored in $\contI(D)$. A \tdef{properly pointed synchronized interval} is a synchronized interval $[P,Q]$ with a distinguished non-initial contact, which is also written as $[P^\ell P^r,Q]$, where $P=P^\ell P^r$ is split by the distinguished contact into two sub-paths $P^\ell$ and $P^r$, where $P^\ell$ is a non-empty Dyck path. We denote by $\mathcal{I}^\bullet_n$ the set of properly pointed synchronized intervals of size $n$. It is clear that the number of properly pointed synchronized intervals corresponding to $I$ is $\contI(I)$.

We now reformulate a recursive decomposition proved in \tambijref.

\begin{prop} \label{prop:int-decomp}
Let $[P,Q]$ be a synchronized interval in the Tamari lattice of order $n$. There is a unique way to decompose the two Dyck paths $P$ and $Q$ as
\[ P = u P_1^\ell d P_1^r P_2, \,\,\, Q = u Q_1 d Q_2, \]
where the sub-paths $P_1^\ell, P_1^r, P_2, Q_1, Q_2$ satisfies:
\begin{itemize}
\item Each path is either empty or a Dyck path;
\item $P_1^\ell P_1^r$ is empty if and only if $Q_1$ is empty, $P_2$ is empty if and only if $Q_2$ is empty;
\item When not empty, $[P_1^\ell P_1^r, Q_1]$ is a properly pointed synchronized interval, and $[P_2,Q_2]$ is synchronized interval;
\item If $P_1^\ell$ is empty, then both $P_1^r$ and $Q_1$ are empty.
\end{itemize}
Let $\epsilon$ be the empty path. The decomposition above is a bijection between $\mathcal{I}$ and $\left(\{[\epsilon, \epsilon]\} \cup \bigcup_{n\geq1} \mathcal{I}^\bullet_n \right) \times \left( \{ [\epsilon,\epsilon]\} \cup \mathcal{I} \right)$.
\end{prop}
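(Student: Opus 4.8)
Here is a plan for proving Proposition~\ref{prop:int-decomp}.

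The plan is to obtain the statement by combining the recursive decomposition of ordinary Tamari intervals established in \tambijref{} with a bookkeeping argument on the \emph{type} of Dyck paths, which is exactly the extra datum that singles synchronized intervals out among all intervals. First, since $Q$ is non-empty it has a unique first-return decomposition $Q=uQ_1dQ_2$ into (possibly empty) Dyck paths, and this is precisely the factorization of the larger path in the statement. Transcribed into the language of Dyck-path intervals, \tambijref{} tells us that, for $P\preceq Q$, the smaller path is forced into the shape $P=uP_1^\ell dP_1^r P_2$ with $P_1^\ell,P_1^r,P_2$ Dyck paths; that the cut $P_1^r P_2$ of the part of $P$ after its first arch is the unique factorization with $|P_1^\ell P_1^r|=|Q_1|$, $|P_2|=|Q_2|$, $P_1^\ell P_1^r\preceq Q_1$ and $P_2\preceq Q_2$; and that sending $[P,Q]$ to the pair consisting of $[P_1^\ell P_1^r,Q_1]$, pointed at the contact between $P_1^\ell$ and $P_1^r$, and of $[P_2,Q_2]$ (with an empty first-arch content replaced by $[\epsilon,\epsilon]$) is a bijection between Tamari intervals and $\bigl(\{[\epsilon,\epsilon]\}\cup\{\text{pointed Tamari intervals}\}\bigr)\times\bigl(\{[\epsilon,\epsilon]\}\cup\{\text{Tamari intervals}\}\bigr)$. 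If one prefers not to cite this, it can be reproved from the usual recursive characterization of the Tamari order; the one delicate point is the existence and uniqueness of the cut $P_1^r P_2$, and that is the genuine order-theoretic obstacle here.

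Granting this, the size identities $|P_1^\ell P_1^r|=|Q_1|$ and $|P_2|=|Q_2|$ at once give the ``empty iff empty'' clauses, so the remaining work is to match $\opname{Type}(P)=\opname{Type}(Q)$ against the synchronization of the two sub-intervals. For this I would establish two compatibility rules for $\opname{Type}$: for a first-return factorization $uRdS$ of non-empty Dyck paths, $\opname{Type}(uRdS)=E\cdot\opname{Type}(R)\cdot N\cdot\opname{Type}(S)$, with the evident degeneracies $\opname{Type}(udS)=N\cdot\opname{Type}(S)$, $\opname{Type}(uRd)=E\cdot\opname{Type}(R)$ and $\opname{Type}(ud)$ the empty word; and for a concatenation $RS$ of non-empty Dyck paths, $\opname{Type}(RS)=\opname{Type}(R)\cdot N\cdot\opname{Type}(S)$, degenerating to $\opname{Type}(R)$ or $\opname{Type}(S)$ when a factor is empty. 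Applying these to $Q=uQ_1dQ_2$ and to $P=uP_1^\ell d(P_1^r P_2)$ gives $\opname{Type}(Q)=\ell'\cdot\opname{Type}(Q_1)\cdot N\cdot\opname{Type}(Q_2)$ and $\opname{Type}(P)=\ell\cdot\opname{Type}(P_1^\ell P_1^r)\cdot N\cdot\opname{Type}(P_2)$ up to these degeneracies, where $\ell=E$ exactly when $P_1^\ell\neq\epsilon$ and $\ell'=E$ exactly when $Q_1\neq\epsilon$. Since the blocks $\opname{Type}(P_1^\ell P_1^r)$ and $\opname{Type}(Q_1)$ have equal length — and likewise the $P_2$- and $Q_2$-blocks — by the size identities, equality of the two type words is equivalent to the conjunction $\ell=\ell'$, $\opname{Type}(P_1^\ell P_1^r)=\opname{Type}(Q_1)$, $\opname{Type}(P_2)=\opname{Type}(Q_2)$, that is, $P_1^\ell$ is empty iff $Q_1$ is, and both sub-intervals are synchronized. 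Finally, $P_1^\ell P_1^r$ is a Dyck path, hence returns to the axis right after $P_1^\ell$, so the cut point is always a contact; it is a \emph{non-initial} contact exactly when $P_1^\ell\neq\epsilon$, i.e.\ exactly when $Q_1\neq\epsilon$. Thus whenever the first component is non-trivial it is a properly pointed synchronized interval, and when $P_1^\ell=\epsilon$ we get $Q_1=\epsilon$ and then $P_1^r=\epsilon$ from $|P_1^\ell P_1^r|=|Q_1|$, which is the last clause of the statement.

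Putting these together shows that the decomposition of the statement is precisely the restriction of the \tambijref{} bijection to synchronized intervals, and it is therefore uniquely determined. For injectivity and surjectivity I would run the argument in reverse: from a properly pointed synchronized interval $[P_1^\ell P_1^r,Q_1]$ (or $[\epsilon,\epsilon]$) and a synchronized interval $[P_2,Q_2]$ (or $[\epsilon,\epsilon]$), form $P=uP_1^\ell dP_1^r P_2$ and $Q=uQ_1dQ_2$; the ``furthermore'' parts of the \tambijref{} decomposition give $P\preceq Q$, the two $\opname{Type}$ rules read backwards give $\opname{Type}(P)=\opname{Type}(Q)$, so $[P,Q]\in\mathcal{I}$, and by construction it decomposes back to the pair one started from. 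The only real friction I anticipate is the case analysis in the $\opname{Type}$ bookkeeping when one or more of $P_1^\ell,P_1^r,P_2,Q_1,Q_2$ is empty — in particular the base case $n=1$ and the case $Q_1=\epsilon$, where the two $N$'s in the rules collapse into one — together with the order-theoretic step producing the unique cut $P_1^r P_2$ if one does not simply invoke \tambijref{}.
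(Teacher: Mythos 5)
Your proposal is sound, and it is worth noting that the paper itself does not prove this proposition at all: it simply defers to \tambijref{}, where the statement is established directly. What you do differently is supply an actual derivation, by factoring the claim into (i) the first-return/pointed-interval decomposition of \emph{ordinary} Tamari intervals --- $Q=uQ_1dQ_2$, $P$ cut at the corresponding contact into $uP_1^\ell dP_1^r$ and $P_2$, with the order relation splitting across the cut --- and (ii) an explicit bookkeeping of $\opname{Type}$ under the two relevant operations. I checked your concatenation rules: for non-empty Dyck factors one indeed has $\opname{Type}(uRdS)=E\cdot\opname{Type}(R)\cdot N\cdot\opname{Type}(S)$ and $\opname{Type}(RS)=\opname{Type}(R)\cdot N\cdot\opname{Type}(S)$, the degeneracies are as you describe, and since the size identities force the blocks of $\opname{Type}(P)$ and $\opname{Type}(Q)$ to align positionally, synchronization of $[P,Q]$ is exactly equivalent to synchronization of the two factors together with ($P_1^\ell=\epsilon \Leftrightarrow Q_1=\epsilon$), which yields the last clause and the ``properly pointed'' property. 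This buys something the paper's citation does not: it isolates precisely where synchronization enters (only in the first-letter comparison $\ell=\ell'$), and it makes the bijectivity statement a formal restriction of the general-interval bijection.

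Two caveats. First, the genuine content remains in step (i) --- the existence and uniqueness of the contact of $P$ at the first return of $Q$, and the fact that $\preceq$ is the product order across that contact --- and you are still citing \tambijref{} (or the standard Tamari-interval decomposition of Bousquet-M\'elou--Fusy--Pr\'eville-Ratelle) for it; you flag this honestly, but if you do reprove it from the covering relation, do it carefully. Second, and related: be warned that the covering relation as literally written in this paper ($VdD_1W$ covers $VD_1dW$) makes the $\preceq$-\emph{larger} path the one with more contacts, which is inconsistent with $P$ (declared the smaller path) being the one that carries the extra cut point and the pointed contact in the decomposition; the decomposition only works if $P$ is the path with more contacts. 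Your argument implicitly uses the intended (BMFPR-style) convention, which is the right one, but if you try to verify the order-theoretic step against the paper's literal covering relation you will run into an apparent contradiction that is the paper's, not yours.
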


\begin{figure}
  \centering
  \includegraphics[page=18,scale=0.8]{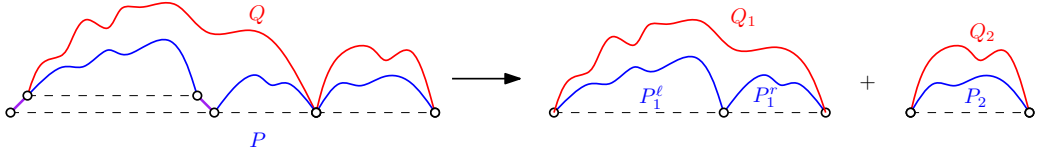}
  \caption{Decomposition of synchronized intervals}
  \label{fig:int-decomp}
\end{figure}

Figure~\ref{fig:int-decomp} illustrate this decomposition. A proof of Proposition~\ref{prop:int-decomp} can be found in \tambijref{}. Let $I=[P,Q]$ be a synchronized interval, and let $P=u P_1^\ell d P_1^r P_2$ and $Q=u Q_1 d Q_2$ be its decomposition as in Proposition~\ref{prop:int-decomp}. According to whether $P_1^\ell$ and $P_2$ are empty or not, the interval $I$ can be put into the following four types (see Figure~\ref{fig:int-type}):

\begin{itemize}
\item \textbf{Type I}: Both $P_1^\ell$ and $P_2$ are empty, which implies $P = Q = ud$;
\item \textbf{Type II}: $P_1^\ell$ is not empty, but $P_2$ is empty, which implies $P=u P_1^\ell d P_1^r$ and $Q=u Q_1 d$, where $[P_1^\ell P_1^r,Q_1]$ is a synchronized interval;
\item \textbf{Type III}: $P_1^\ell$ is empty, but $P_2$ is not empty, which implies $P=udP_2$ and $Q=udQ_2$, where $[P_2,Q_2]$ is a synchronized interval;
\item \textbf{Type IV}: Neither $P_1^\ell$ nor $P_2$ is empty, which implies that both $[P_1^\ell P_1^r,Q_1]$ and $[P_2,Q_2]$ are synchronized intervals.
\end{itemize}

\begin{figure}
  \centering
  \includegraphics[page=19,scale=0.8]{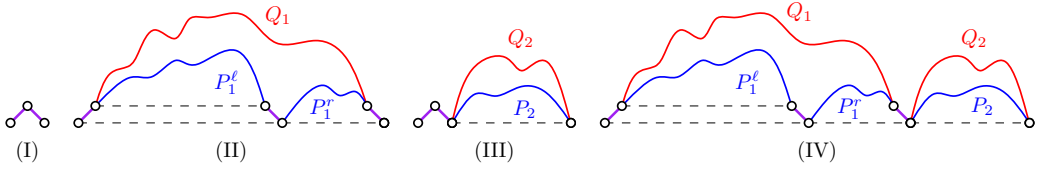}
  \caption{Synchronized intervals of different types}
  \label{fig:int-type}
\end{figure}

We now define a few operations according to the types above. Let $I=[P,Q]$ be a synchronized interval, and $k=\contI(I)$. For any $1\leq i\leq k$, we define $\DeltaI(I,i)$ to be the synchronized interval $[u P^\ell d P^r, u Q d]$, where $[P^\ell P^r, Q]$ is the properly pointed synchronized interval obtained by distinguishing the $i^{\rm th}$ contact \emph{from right to left}. If $I$ is of type II, it can be written as $I = [u P_1^\ell d P_1^r, u Q_1 d]$, and in this case we define a synchronized interval $\PiI(I) = [P_1,Q_1]$, where $P_1 = P_1^\ell P_1^r$. Let $I_1, I_2$ be two synchronized intervals, and $1 \leq i \leq \contI(I_1)$. We denote by $\oplusI(I_1, i, I_2)$ the synchronized interval obtained by concatenating $\DeltaI(I_1, i)$ with $I_2$. Conversely, given a synchronized interval $I=[uP_1^\ell dP_1^r P_2, uQ_1dQ_2]$ of type IV, we define $\hdI(I)=[P_1^\ell P_1^r, Q_1]$ and $\tlI(I)=[P_2,Q_2]$. The following properties are corollaries of Proposition~\ref{prop:int-decomp}.

\begin{prop} \label{prop:pi-delta-i}
  Let $I$ be a synchronized interval and $k=\contI(I)$. For any integer $i$ between $1$ and $k$, we have $\PiI(\DeltaI(I,i))=I$ and $\contI(\DeltaI(I,i))=i$. Moreover, every synchronized interval $I'$ of type II can be uniquely written as $I'=\DeltaI(J,i)$, where $J$ is a synchronized interval.
\end{prop}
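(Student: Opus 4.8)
The plan is to derive Proposition~\ref{prop:pi-delta-i} as a direct translation of the bijective decomposition in Proposition~\ref{prop:int-decomp}, specialized to the type~II case. The key observation is that a synchronized interval $I'$ is of type~II precisely when, in its canonical decomposition $P = u P_1^\ell d P_1^r P_2$, $Q = u Q_1 d Q_2$, the tail part $P_2$ (equivalently $Q_2$) is empty while $P_1^\ell$ is non-empty; in that situation $\PiI$ is exactly the operation ``extract the non-trivial head interval $[P_1^\ell P_1^r, Q_1]$ and forget the distinguished contact''. So the proposition is really the statement that, restricted to the $\{[\epsilon,\epsilon]\}$-factor in the tail slot, the bijection of Proposition~\ref{prop:int-decomp} matches up $\DeltaI$ and $\PiI$ as mutually inverse maps, together with the bookkeeping identity $\contI(\DeltaI(I,i)) = i$.

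First I would unwind the definition of $\DeltaI(I,i)$: starting from a synchronized interval $I = [P,Q]$ with $\contI(I) = k$, distinguishing the $i$-th contact from the right splits $P = P^\ell P^r$ with $P^\ell$ a non-empty Dyck path, giving a properly pointed synchronized interval $[P^\ell P^r, Q] \in \mathcal{I}^\bullet_n$; then $\DeltaI(I,i) = [u P^\ell d P^r, u Q d]$. I would check directly that this is a synchronized interval (the types of $u P^\ell d P^r$ and $uQd$ agree because the types of $P^\ell P^r$ and $Q$ agree and the extra $u\cdots d$ wrapping contributes the same type letters on both sides) and that it is of type~II (its tail $P_2 = \epsilon$, and its head $P_1^\ell = P^\ell$ is non-empty by construction). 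Then, by the uniqueness clause of Proposition~\ref{prop:int-decomp}, the canonical decomposition of $\DeltaI(I,i)$ has head part $[P^\ell P^r, Q]$ pointed at the contact that separates $P^\ell$ from $P^r$, so applying $\PiI$ returns $[P^\ell P^r, Q] = I$ as an unpointed interval. For the contact count, distinguishing the $i$-th contact from the right means $P^\ell$ has exactly $i$ contacts beyond its initial one, i.e. $P^\ell$ ends at the $i$-th contact counted from the right end of $P$; wrapping as $u P^\ell d P^r$ turns those into the non-initial contacts of the new smaller path, so $\contI(\DeltaI(I,i)) = i$. I would present this as a short computation, possibly with reference to Figure~\ref{fig:int-type}.

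For the converse direction, let $I'$ be any synchronized interval of type~II. By definition of type~II its canonical decomposition reads $P = u P_1^\ell d P_1^r$, $Q = u Q_1 d$ with $P_1^\ell$ non-empty and $[P_1^\ell P_1^r, Q_1]$ a synchronized interval; moreover Proposition~\ref{prop:int-decomp} equips $[P_1^\ell P_1^r, Q_1]$ with a distinguished non-initial contact (the one separating $P_1^\ell$ from $P_1^r$), making it a properly pointed synchronized interval. Set $J = \PiI(I') = [P_1^\ell P_1^r, Q_1]$ (as an unpointed interval) and let $i$ be the index, counted from the right, of the distinguished contact among the non-initial contacts of $P_1^\ell P_1^r$; then $1 \le i \le \contI(J)$ and by the first part $\DeltaI(J,i) = I'$. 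Uniqueness of the pair $(J,i)$ follows from the uniqueness of the canonical decomposition in Proposition~\ref{prop:int-decomp}: any presentation $I' = \DeltaI(J',i')$ forces $[P^{\ell\prime}P^{r\prime}, Q'] = [P_1^\ell P_1^r, Q_1]$ with matching distinguished contact, hence $J' = J$ and $i' = i$. I would also remark that $\PiI(I')$ is automatically a synchronized interval because that is part of what type~II guarantees via Proposition~\ref{prop:int-decomp}.

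I do not anticipate a serious obstacle here; the proposition is essentially a repackaging of the already-established Proposition~\ref{prop:int-decomp}, and the analogous statements for $\beta$-(1,0) trees (Proposition~\ref{prop:delta-pi-b}) and decorated trees (Proposition~\ref{prop:pi-delta-t}) show the intended level of detail. The one point demanding a little care is the contact-counting identity $\contI(\DeltaI(I,i)) = i$: one must be precise that $\DeltaI$ selects the $i$-th contact \emph{from right to left}, so that after wrapping in $u\cdots d$ the surviving non-initial contacts of the new path $u P^\ell d P^r$ are exactly the contacts of $P^\ell$ other than its origin, of which there are $i$. Getting the ``from the right'' convention consistent between the definition of $\DeltaI$ and the claimed formula is the only place an off-by-one error could creep in, so I would spell that out explicitly, ideally keyed to the figure.
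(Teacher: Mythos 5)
Your overall strategy is exactly what the paper intends: the paper gives no proof of this proposition, simply declaring it (together with Proposition~\ref{prop:oplus-hd-tl-i}) a corollary of Proposition~\ref{prop:int-decomp}, and your unwinding of the type~II case of that decomposition --- including the uniqueness argument via the canonical decomposition --- is the right way to fill in the details.

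However, the one step you yourself flagged as delicate, the identity $\contI(\DeltaI(I,i))=i$, is the step whose justification you get wrong (the identity itself is true). Distinguishing the $i$-th non-initial contact of $P$ \emph{from right to left} means $P^r$ contains the $i-1$ contacts strictly to the right of the distinguished one, while $P^\ell$ contains the remaining $k-i+1$ non-initial contacts of $P$; your claim that ``$P^\ell$ has exactly $i$ contacts beyond its initial one'' is false in general. Moreover, wrapping as $uP^\ell d$ lifts every interior contact of $P^\ell$ to height $1$, so those contacts do \emph{not} survive in the new path --- the surviving non-initial contacts of $uP^\ell dP^r$ are the endpoint of the arch $uP^\ell d$ together with the $i-1$ non-initial contacts of $P^r$, giving $1+(i-1)=i$. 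With that sentence repaired, the proof is complete and matches the intended derivation.
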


\begin{prop} \label{prop:oplus-hd-tl-i}
  For two synchronized intervals $I_1, I_2$ and an integer $1 \leq i \leq \contI(I_1)$, let $I=\oplusI(I_1,i,I_2)$. We have $\hdI(I)=I_1$, $\tlI(I)=I_2$ and $\contI(I)=i+\contI(I_2)$. Furthermore, every synchronized interval $I'$ of type IV can be uniquely written as $I' = \oplusI(I_1',j,I_2')$ with $I_1', I_2' \in \mathcal{I}$.
\end{prop}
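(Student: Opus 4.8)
The plan is to derive both statements directly from the decomposition of Proposition~\ref{prop:int-decomp} by unwinding the definitions of $\oplusI$, $\hdI$ and $\tlI$. Write $I_1 = [P^{(1)}, Q^{(1)}]$ and $I_2 = [P^{(2)}, Q^{(2)}]$, and for $1 \le i \le \contI(I_1)$ let $P^{(1)} = P^\ell P^r$ be the splitting at the $i$-th contact of $P^{(1)}$ counted from the right; since $i \le \contI(I_1)$ the distinguished contact is not the initial one, so $P^\ell$ is a non-empty Dyck path and $[P^\ell P^r, Q^{(1)}]$ is a properly pointed synchronized interval. By definition $\DeltaI(I_1,i) = [u P^\ell d P^r,\, u Q^{(1)} d]$, so concatenating with $I_2$ yields $\oplusI(I_1, i, I_2) = [\, u P^\ell d P^r P^{(2)},\, u Q^{(1)} d Q^{(2)}\,]$. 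I would then invoke the inverse direction of the bijection in Proposition~\ref{prop:int-decomp}, applied to the properly pointed synchronized interval $[P^\ell P^r, Q^{(1)}]$ and the synchronized interval $[P^{(2)}, Q^{(2)}]$: this shows at once that $\oplusI(I_1,i,I_2)$ lies in $\mathcal{I}$ and that its canonical decomposition has $P_1^\ell = P^\ell$, $P_1^r = P^r$, $P_2 = P^{(2)}$, $Q_1 = Q^{(1)}$, $Q_2 = Q^{(2)}$. Reading $\hdI$ and $\tlI$ off this decomposition gives $\hdI(\oplusI(I_1,i,I_2)) = I_1$ and $\tlI(\oplusI(I_1,i,I_2)) = I_2$ with no further work.

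For the contact statistic I would count contacts of the small path $P = u P^\ell d P^r P^{(2)}$: since $P^\ell$ is a non-empty Dyck path, $u P^\ell d$ meets the $x$-axis only at its two endpoints, so the contacts of $P$ are the origin, followed by the contacts of $P^r$ (whose first contact coincides with the end of $u P^\ell d$), followed by those of $P^{(2)}$ (sharing its first contact with the last of $P^r$). Writing $c_r$ and $c_2$ for the number of contacts of $P^r$ and $P^{(2)}$, this gives $c_r + c_2$ contacts for $P$, hence $\contI(\oplusI(I_1,i,I_2)) = c_r + c_2 - 1$. It then remains to observe that $P^r$ runs exactly from the $i$-th contact of $P^{(1)}$ from the right to its last contact, so $c_r = i$, while $c_2 = \contI(I_2) + 1$; combining, $\contI(\oplusI(I_1,i,I_2)) = i + \contI(I_2)$. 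Matching the ``right-to-left'' indexing convention with this count so that $c_r = i$ exactly is the one place requiring care, and is the main (rather mild) obstacle in the argument.

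For the ``furthermore'' part, given a type IV synchronized interval $I'$ I would take its decomposition $P = u P_1^\ell d P_1^r P_2$, $Q = u Q_1 d Q_2$ from Proposition~\ref{prop:int-decomp}, with $P_1^\ell$ and $P_2$ both non-empty. Set $I_1' = \hdI(I') = [P_1^\ell P_1^r, Q_1]$ and $I_2' = \tlI(I') = [P_2, Q_2]$, which are synchronized intervals by Proposition~\ref{prop:int-decomp}. As $P_1^\ell$ is a non-empty Dyck path, the contact of $P_1^\ell P_1^r$ separating $P_1^\ell$ from $P_1^r$ is non-initial, hence equals the $j$-th contact from the right for a unique $j$ with $1 \le j \le \contI(I_1')$; then $\DeltaI(I_1', j) = [u P_1^\ell d P_1^r,\, u Q_1 d]$ and concatenation with $I_2'$ returns exactly $I'$, so $I' = \oplusI(I_1', j, I_2')$. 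Finally, uniqueness follows from the first part: if $\oplusI(I_1'', j', I_2'') = I'$ then necessarily $I_1'' = \hdI(I') = I_1'$ and $I_2'' = \tlI(I') = I_2'$, and then the identity $\contI(I') = j' + \contI(I_2')$ forces $j' = j$.
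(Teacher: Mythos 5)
Your proposal is correct and follows exactly the route the paper intends: the paper states this proposition as an immediate corollary of Proposition~\ref{prop:int-decomp} without further detail, and your argument is precisely the unwinding of the definitions of $\oplusI$, $\hdI$, $\tlI$ against that canonical decomposition, including the correct bookkeeping that $P^r$ carries $i$ contacts under the right-to-left indexing. Nothing is missing.
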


We observe that the recursive construction of synchronized intervals is also isomorphic to that of $\beta$-(1,0) trees, with the statistic of lower path contacts $\contI(I)$ that plays the same role as the statistic of root label $\rootB(B)$. We thus define the canonical map $\phi_{I}$ from the set of synchronized intervals $\mathcal{I}$ to the set of $\beta$-(1,0) trees $\mathcal{B}$ by identifying all the cases of both recursive decompositions.

\section{Natural canonical bijections} \label{sec:nat-bij}

In this section, we discuss two bijections from \tambijref{} relating objects we mentioned above: the bijection $\mathrm{T}$ from NSP-maps to decorated trees, and the bijection $\mathrm{I}$ from decorated trees to synchronized intervals. We then show that both $\mathrm{T}$ and $\mathrm{I}$ are canonical bijections with respect to the recursive decompositions for each class of objects, albeit the fact that they are defined as direct bijections.

In \tambijref{}, the authors gave a bijection $\mathrm{T}$ between NSP-maps and decorated trees via the following depth-first search procedure. For an NSP-map $M$ with its root $e = (v \to u)$, we first delete $e$ in $M$, mark $u$ and $v$ as visited, then proceed a depth-first search starting from $u$ and the edge next to $e$ on the outer face, and at each time we explore a new vertex, the edges of the newly visited vertex should be then visited in clockwise order. Each newly visited edge leading to an already visited vertex of depth $p$ produces a leaf with label $p$, with the convention that $v$ is of depth $-1$. We thus have a tree with labels on its leaves, and it has been proved in \tambijref{} that it is always a decorated tree. This bijection is illustrated in Figure~\ref{fig:bij-T}.

\begin{figure}
\centering
\includegraphics[page=2, scale=0.75]{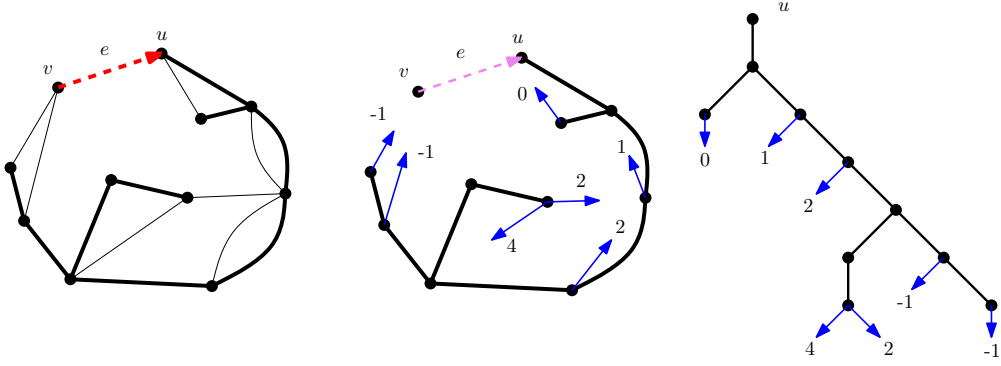}
\caption{The bijection $\mathrm{T}$ applied to an NSP-map $M$} \label{fig:bij-T}
\end{figure}

\begin{prop} \label{prop:T-bij}
For any $n \geq 1$, the function $\mathrm{T}$ is a bijection from $\mathcal{M}_n$ to $\mathcal{T}_n$ that transfers the $\opname{deg}$ statistic in $\mathcal{M}_n$ to $\flT$ in $\mathcal{T}_n$.
\end{prop}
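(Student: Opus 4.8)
The plan is to prove Proposition~\ref{prop:T-bij} by identifying $\mathrm{T}$ with the canonical bijection attached to the recursive decompositions, that is, by showing $\mathrm{T} = \phi_T^{-1}\circ\phi_M$. Since $\phi_M$ and $\phi_T$ were built by matching, type by type, the four-case recursive decompositions of $\mathcal{M}$ and of $\mathcal{T}$ with that of $\mathcal{B}$, this identity instantly gives that $\mathrm{T}$ restricts to a bijection $\mathcal{M}_n\to\mathcal{T}_n$ (the decompositions preserve size) and that it transfers $\opname{deg}$ to $\flT$: indeed $\opname{deg}(M)=\rootB(\phi_M(M))$ and $\flT(T)=\rootB(\phi_T(T))$ were observed when the two decompositions were compared with that of $\beta$-(1,0) trees, so $\flT(\mathrm{T}(M))=\rootB(\phi_T(\mathrm{T}(M)))=\rootB(\phi_M(M))=\opname{deg}(M)$. (One could instead quote the bijectivity of $\mathrm{T}$ onto $\mathcal{T}_n$ from \tambijref{}; what really needs to be added here is the compatibility with the decompositions, from which everything else follows.)

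The core is therefore an induction on the number of edges of $M$, proving that $\mathrm{T}$ sends a type-$k$ NSP-map to a type-$k$ decorated tree and intertwines the structural operations of the two decompositions, using Propositions~\ref{prop:pi-delta-m}, \ref{prop:oplus-hd-tl-m}, \ref{prop:pi-delta-t} and~\ref{prop:oplus-hd-tl-t} to know the operations are well defined and invertible on each side. \textbf{Type I:} a pair of double edges is sent by the DFS to the tree whose root has a single child which is a leaf; deleting the root $e=(v\to u)$ and traversing the remaining parallel edge from $u$ reaches $v$, of depth $-1$, so that child is a free leaf, i.e.\ a type-I decorated tree. \textbf{Type III:} here $M$ is obtained by duplicating the root of a smaller map $M'$, and the DFS first traverses the duplicate of $e$, which leads straight back to $v$, producing a free leaf as the leftmost child of the root; the remaining traversal is exactly the DFS of $M'$, so $\mathrm{T}(M)$ is $\mathrm{T}(M')$ with a free leaf prepended at the root, matching the type-III operation on decorated trees. \textbf{Type II:} here $M=\DeltaM(M',i)$, and running the DFS through the contracted root edge shifts every depth uniformly except on the relevant free leaves, turning $\mathrm{T}(M')$ into $\DeltaT(\mathrm{T}(M'),i)$, the shift being recorded precisely by the ``add $1$ to every leaf except the last $i$ free leaves'' rule of $\DeltaT$. \textbf{Type IV:} here $M=\oplusM(M_1,i,M_2)$, and the successive non-separable components of the onion structure around the root are visited by the DFS as the successive direct sub-trees of the root of $\mathrm{T}(M)$; one checks that the component corresponding to the innermost one, $M_1=\hdM(M)$, becomes the distinguished direct sub-tree on which $\hdT$ acts, so $\hdT(\mathrm{T}(M))=\mathrm{T}(M_1)$ and $\tlT(\mathrm{T}(M))=\mathrm{T}(M_2)$, i.e.\ $\mathrm{T}(M)=\oplusT(\mathrm{T}(M_1),i,\mathrm{T}(M_2))$. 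The base case and these three cases close the induction.

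The main obstacle is the type-IV step: one must carefully match the clockwise order in which the DFS, started at $u$ along the edge next to $e$ on the outer face, sweeps through the non-separable components of the onion structure, with the left-to-right order of the direct sub-trees of the root of $\mathrm{T}(M)$, and verify that the depth offsets accumulated along the way are exactly those encoded by the leaf-increment of $\DeltaT$ inside $\oplusT$; making the parameter $i$ agree on both sides — it is a root-vertex degree on the map side and a count of trailing free leaves on the tree side — is the delicate point, handled through Proposition~\ref{prop:oplus-hd-tl-m} and Proposition~\ref{prop:oplus-hd-tl-t}. The depth bookkeeping across the contracted root edge in the type-II case is a milder instance of the same difficulty. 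As a cross-check of the statistic transfer, note the direct argument: each of the $\opname{deg}(v)-1$ non-root edges incident to $v$ is traversed exactly once during the DFS, from its far endpoint, and then produces a leaf of label $-1$, which equals the depth of $v$; hence $\flT(\mathrm{T}(M))=\opname{deg}(v)-1=\opname{deg}(M)$ independently of the recursion.
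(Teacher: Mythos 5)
Your proposal is essentially the paper's own route: Proposition~\ref{prop:T-bij} is derived there exactly as you propose, by showing $\phi_M=\phi_T\circ\mathrm{T}$ (Theorem~\ref{thm:T-recursive-canonical}) via induction on the four types of the parallel decomposition, and then reading off bijectivity and the transfer $\opname{deg}\mapsto\flT$ from the fact that both statistics correspond to $\rootB$; your closing direct count of free leaves as the non-root edges at the root vertex is also the argument of \tambijref{}. The only slip is a convention detail in your Type~III step: the innermost component (the loop $e_1$) is visited \emph{last} by the DFS, not first, which is consistent with the paper's Type~IV statement that the last-visited part becomes the \emph{leftmost} direct sub-tree.
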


A direct proof that does not depend on recursive decompositions can be found in \tambijref{}. In the following, we will also give a proof that $\mathrm{T}$ is a bijection using a recursive decomposition of decorated trees. More precisely, we prove that $\mathrm{T}$ illustrated in Figure~\ref{fig:bij-T} is the canonical bijection from our parallel decomposition of NSP-maps to decorated trees.

\begin{thm} \label{thm:T-recursive-canonical}
We have $\phi_M = \phi_T \circ \mathrm{T}$, where $\phi_M$ (\emph{resp.} $\phi_T$) is the canonical bijections from NSP-maps (\emph{resp.} decorated trees) to $\beta$-(1,0) trees.
\end{thm}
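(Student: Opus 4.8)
The plan is to prove $\phi_M = \phi_T \circ \mathrm{T}$ by induction on the number of edges of the NSP-map $M$, using the fact that both $\phi_M$ and $\phi_T$ are defined recursively via the same four-type decomposition and that $\mathrm{T}$ respects this decomposition. The key is to show that $\mathrm{T}$ commutes with the structural operations $\Delta$, $\Pi$, $\oplus$, $\hd$, $\tl$ on the map and tree side, and in particular that it preserves the \emph{type} of an object. First I would establish that if $M$ is of parallel type $k$ (for $k \in \{\mathrm{I},\mathrm{II},\mathrm{III},\mathrm{IV}\}$), then $\mathrm{T}(M)$ is a decorated tree of type $k$; this follows from a careful reading of the depth-first search defining $\mathrm{T}$, observing how the root edge $e=(v\to u)$ and the first explored edge relate to the onion structure of the parallel decomposition. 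The convention that $v$ has depth $-1$ is exactly what makes free leaves (label $-1$) correspond to edges of $M$ between the DFS tree and $v$, i.e.\ to the parallel components adjacent to the contracted vertex; this is why $\opname{deg}(M) = \flT(\mathrm{T}(M))$, already asserted in Proposition~\ref{prop:T-bij}.

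The inductive step then treats each type. For type I, both sides are the base cases, so $\phi_M(M)$ and $\phi_T(\mathrm{T}(M))$ are both the type I $\beta$-(1,0) tree. For type II, I would show $\mathrm{T}(\DeltaM(M',i)) = \DeltaT(\mathrm{T}(M'), i)$: contracting the root of $M'$ into a vertex and splitting off a new root edge with the new vertex of degree $i$ corresponds, under the DFS, to attaching $\mathrm{T}(M')$ under a new root and incrementing leaf labels except on the last $i$ free leaves (matching the definition of $\DeltaT$). Then by the induction hypothesis and the definitions of $\phi_M$, $\phi_T$, $\phi_M(\DeltaM(M',i)) = \DeltaB(\phi_M(M'),i) = \DeltaB(\phi_T(\mathrm{T}(M')),i) = \phi_T(\DeltaT(\mathrm{T}(M'),i)) = \phi_T(\mathrm{T}(\DeltaM(M',i)))$. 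For type III, deleting the loop edge of $M$ corresponds to removing the leftmost child (a free leaf, by the first condition on decorated trees) of the root of $\mathrm{T}(M)$; one checks $\mathrm{T}$ of the loop-deletion equals the free-leaf removal, and both $\phi_M$ and $\phi_T$ apply their respective type III constructions identically. For type IV, I would show $\mathrm{T}(\oplusM(M_1,i,M_2)) = \oplusT(\mathrm{T}(M_1), i, \mathrm{T}(M_2))$ by checking that the innermost non-separable component of $\PiM(M)$ is explored first in the DFS, producing the leftmost direct sub-tree $T_{hd}$ with $\PiT(T_{hd}) = \mathrm{T}(\hdM(M)) = \mathrm{T}(M_1)$, while the rest gives $T_{tl} = \mathrm{T}(\tlM(M)) = \mathrm{T}(M_2)$; then conclude as before using Propositions~\ref{prop:oplus-hd-tl-m} and \ref{prop:oplus-hd-tl-t}.

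The main obstacle I anticipate is the type IV case, specifically verifying that the clockwise DFS on $M = \oplusM(M_1,i,M_2)$ visits exactly the edges of the innermost component $\hdM(M)$ before any other, and that the labels it produces on the leaves inside the $M_1$-part match those of $\DeltaT(\mathrm{T}(M_1),i)$ rather than $\mathrm{T}(M_1)$ — the depth offset by one coming from the extra internal node, together with the subtlety of which $i$ free leaves remain free, must be traced carefully through the embedding. A secondary subtlety is that the $\Pi$ operations involve relabeling conventions (root label adjustment on the tree side, re-rooting on the map side) that must be shown to be compatible; Proposition~\ref{prop:pi-delta-t} and the definition of $\hdT$ via $\PiT(T_{hd})$ handle the bookkeeping, but one must confirm the DFS respects it. Once these commutation lemmas are in place, the induction is routine, and as a byproduct this gives an independent proof that $\mathrm{T}$ is a bijection (Proposition~\ref{prop:T-bij}), since $\phi_M$ and $\phi_T$ are bijections and the diagram commutes.
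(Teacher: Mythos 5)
Your proposal follows essentially the same route as the paper's proof: induction on the number of edges, a case analysis over the four parallel types, and verification that $\mathrm{T}$ intertwines $\DeltaM$ with $\DeltaT$ and $\oplusM$ with $\oplusT$, concluding each case by the induction hypothesis. One small correction in your type IV (and III) discussion: in the clockwise DFS the innermost component is visited \emph{last}, not first — it still becomes the leftmost direct sub-tree under the drawing convention used, so the conclusion you aim for is the right one and the rest of the argument stands.
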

\begin{proof}
We proceed by induction on the size of NSP-maps. Let $M \in \mathcal{M}_n$, we verify that $\phi_M(M) = \phi_T(\mathrm{T}(M))$ according to the type of $M$ while supposing that any map $M'$ with a size smaller than $M$ satisfies $\phi_M(M') = \phi_T(\mathrm{T}(M'))$.
\begin{itemize}
\item \textbf{Type I}: This is the base case, and we easily observe that $\phi_M(M) = \phi_T(\mathrm{T}(M))$.
\item \textbf{Type II}: Figure~\ref{fig:T-canonical-case-2} gives an illustration of this case. We know that $M = \DeltaM(M', i)$ for a certain NSP-map $M'$ and $1 \leq i \leq \opname{deg}(M')$ by Proposition~\ref{prop:pi-delta-m}. By induction hypothesis, we only need to prove that $\mathrm{T}(M) = \DeltaT(\mathrm{T}(M'),i)$. Let $v$ be the root vertex of $M'$ and $(v,w)$ the root of $M'$. Consider the exploration process that constructs $\mathrm{T}(M)$. The two maps $M$ and $M'$ only differs in that $v$ in $M'$ is split into $v$ and $u$ in $M$, which does not affect the exploration process since they are all marked as visited at the beginning. Therefore, the exploration of $M$ starts with the edge $(u,w)$, and the rest is the same as that of $M'$. Ignoring the edge $(u,w)$, the two decorated trees diverge only in that some free leaves in $\mathrm{T}(M')$ become leaves labeled by $0$ in $\mathrm{T}(M)$ pointing to $u$. Therefore, $\PiT(T') = \mathrm{T}(M')$. Since $\flT(\mathrm{T}(M)) = i$ and $\PiT(T')=\mathrm{T}(M')$, by Proposition~\ref{prop:pi-delta-t}, we have $\mathrm{T}(M) = \DeltaT(\mathrm{T}(M'),i)$.
\item \textbf{Type III}: Let $e_1$ be the edge corresponding to the loop after contraction of the root, and $M'$ the map after deleting $e_1$. In the construction of $\mathrm{T}(M)$, the edge $e_1$ is visited last and is a free leaf, while the remainder is $\mathrm{T}(M')$.
\item \textbf{Type IV}: In this case, we know that $M = \oplusM(M_1, i, M_2)$ for two smaller NSP-maps $M_1, M_2$ and $1 \leq i \leq \opname{deg}(M_1)$. In the construction of $\mathrm{T}(M)$, the part of $M_1$ is visited last and mapped to the leftmost direct sub-tree of the root. Let $T_1 = \mathrm{T}(M_1)$ and $T_2 = \mathrm{T}(M_2)$, by induction hypothesis, similar to the type II case, we have $\mathrm{T}(\DeltaM(M_1,i)) = \DeltaT(T_1,i)$, thus $\mathrm{T}(M) = \oplusT(T_1, i, T_2) = \phi_M(M)$ by the definition of $\oplusT$.
\end{itemize}

We thus complete the induction and we have $\phi_M = \phi_T \circ \mathrm{T}$.
\end{proof}

\begin{figure}
\centering
\includegraphics[page=13, scale=1]{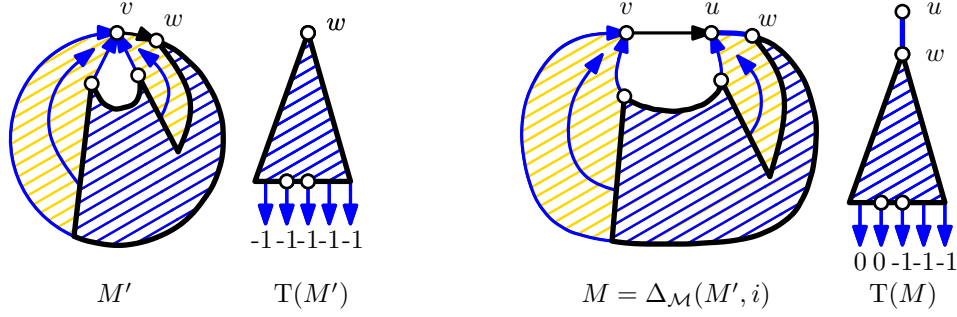}
\caption{Decorated trees in the recursive decomposition of NSP-maps of type II} \label{fig:T-canonical-case-2}
\end{figure}

Since both $\phi_M$ and $\phi_T$ are canonical recursive bijections, from Theorem~\ref{thm:T-recursive-canonical} we conclude that, albeit the non-recursive definition, $T$ is also the canonical recursive bijection from the set $\mathcal{M}$ of NSP-maps in parallel decomposition to the set $\mathcal{T}$ of decorated trees. We thus have Proposition~\ref{prop:T-bij} as a consequence of Theorem~\ref{thm:T-recursive-canonical}. Furthermore, we have the following observation.

\begin{prop}
Let $T$ be a decorated tree. Disregarding labels, $T$ and $\phi_T(T)$ are the same tree.
\end{prop}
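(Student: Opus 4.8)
The plan is to prove the statement by induction on the size of the decorated tree $T$, exploiting the fact that $\phi_T$ was defined by identifying the four-case recursive decomposition of decorated trees with that of $\beta$-(1,0) trees. The claim to establish is that the underlying plane tree of $T$ (forgetting the leaf labels) equals the underlying plane tree of $\phi_T(T)$ (forgetting all node labels). The key observation is that in each of the four cases of the recursive decomposition, the operations $\DeltaT, \oplusT$ on decorated trees and $\DeltaB, \oplusB$ on $\beta$-(1,0) trees perform exactly the same modification to the underlying plane tree (adding a new root above with one child, respectively grafting one tree as leftmost direct sub-tree under a shared new root), and the type III case adds a leftmost leaf child in both worlds; only the labels are handled differently.

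First I would set up the induction: the base case is type I, where $T$ is a single edge from the root to a leaf, and $\phi_T(T)$ is the type I $\beta$-(1,0) tree, which is also a single edge; these have the same underlying tree. For the inductive step, I would treat the four types. In type II, $T = \DeltaT(T', i)$ and $\phi_T(T) = \DeltaB(\phi_T(T'), i)$; by induction $T'$ and $\phi_T(T')$ have the same underlying tree, and both $\DeltaT(\cdot, i)$ and $\DeltaB(\cdot, i)$ add a new vertex as parent of the old root with no other children, so the underlying trees of $T$ and $\phi_T(T)$ agree. In type III, both constructions attach a new leaf as leftmost child of the root (and the rest is an inductively smaller object whose underlying tree matches), so again the underlying trees agree. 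In type IV, $T = \oplusT(T_1, i, T_2)$ and $\phi_T(T) = \oplusB(\phi_T(T_1), i, \phi_T(T_2))$; by induction the underlying trees of $T_1, T_2$ match those of $\phi_T(T_1), \phi_T(T_2)$, and both $\oplusT$ and $\oplusB$ identify the roots of $\DeltaT(T_1,i)$ (resp. $\DeltaB(\phi_T(T_1),i)$) and $T_2$ (resp. $\phi_T(T_2)$) with the root of $T_1$'s modified copy as leftmost child — so the underlying trees agree. This completes the induction.

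I do not expect a genuine obstacle here, since the statement is essentially a bookkeeping observation: the whole point of the parallel recursive decomposition is that the four structural operations are "label-blind" on the tree shape. The only mild care needed is to make precise what "the same tree" means — I would phrase it as: there is a canonical bijection between the vertex sets (and hence edge sets) of $T$ and $\phi_T(T)$, compatible with the parent relation and the left-to-right order of children, arising recursively from the case analysis. One should also note that $T$ and $\phi_T(T)$ have the same number of edges (already recorded: $\phi_T$ maps $\mathcal{T}_n$ to $\mathcal{B}_n$), which is consistent. The slightly delicate point, if any, is simply to check in type II that $\DeltaT$ does not create or destroy any vertex beyond the single new root, and that the "last $i$ free leaves" relabelling and the type III leftmost-leaf addition are purely label operations on an otherwise identical tree shape; but these are immediate from the definitions of $\DeltaT$ and the type III decomposition.
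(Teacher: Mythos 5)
Your proof is correct and is exactly the argument the paper implicitly relies on: the paper states this proposition as an unproved observation, and the intended justification is precisely the structural induction you give, noting that in each of the four cases the operations $\DeltaT,\oplusT$ and the type~III leaf-attachment act on the underlying plane tree in the same way as $\DeltaB,\oplusB$ and their $\beta$-(1,0)-tree counterparts. No gaps; your care about what ``the same tree'' means (a shape isomorphism respecting the parent relation and left-to-right order of children) is a reasonable precision the paper leaves tacit.
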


Therefore, for a decorated tree $T$, the only difference between $T$ and $\phi_T(T)$ is that we have labels only on leaves of $T$, but on every node of $\phi_T(T)$. We want a more direct, non-recursive way to construct $\phi_T(T)$ from $T$ by converting between these two kinds of labels. We define the following map $\varphi_T$ from $\mathcal{T}$ to $\mathcal{B}$. Let $T$ be a decorated tree. For an internal node of $T$ of depth $p > 0$, we label it by the number of descendant leaves with labels at most $p-2$. We label the root of $T$ by $\flT(T)$. By changing all leaf labels to $1$, we get a tree with labels on every node that we denote by $\varphi_T(T)$. An example can be found in Figure~\ref{fig:decotree}.

\begin{prop}
For any decorated tree $T$, the tree $\varphi_T(T)$ is the $\beta$-(1,0) tree $\phi_T(T)$.
\end{prop}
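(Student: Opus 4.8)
The plan is to prove $\varphi_T(T) = \phi_T(T)$ by induction on the number of edges of $T$, using the four-case recursive decomposition of decorated trees. Since we already know from the previous proposition that $T$ and $\phi_T(T)$ agree as unlabeled plane trees, and $\varphi_T$ by construction does not change the underlying tree either, it suffices to check that the node labels assigned by $\varphi_T$ coincide with those of $\phi_T(T)$. The key point is that $\varphi_T$ is defined in a purely \emph{local-to-global} way — the label of an internal node $t$ of depth $p$ counts descendant leaves with label $\leq p-2$ — so I must verify this count is compatible with each of the operations $\DeltaT$, $\oplusT$, and the type III leaf-attachment, which is exactly what $\phi_T$ tracks via $\DeltaB$ and $\oplusB$.

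First I would handle the base case (type I): the type I decorated tree has a root with a single free leaf child, $\flT(T) = 1$, so $\varphi_T$ labels the root $1$, and the leaf becomes a node labeled $1$; this is precisely the type I $\beta$-(1,0) tree. For the inductive step, in each non-base case I compare how $\varphi_T$-labels change against how $\phi_T$-labels change. In \textbf{type III}, $T$ is obtained from a smaller $T'$ by attaching a free leaf as the leftmost child of the root; since the new leaf has label $-1 \leq p-2$ for every internal node $t$ of positive depth on the path — wait, more carefully: the free leaf is at depth $1$, so it is counted in the $\varphi_T$-label of the root (which is $\flT(T) = \flT(T')+1$) but its label $-1$ is not $\leq p-2$ for its parent, the root, when we use the convention for the root directly; in any case, for every \emph{other} internal node the descendant-leaf set is unchanged, so all non-root labels are unchanged, and the root label increases by exactly $1$, matching the type III rule for $\phi_T$ (attach a leaf at the root, add $1$ to the root label). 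In \textbf{type II}, $T = \DeltaT(T',i)$: here every node of $T'$ has its depth incremented by $1$ and the new root $v$ of depth $0$ has a single child $u$ of depth $1$; for a node $t$ that was at depth $p$ in $T'$ and is now at depth $p+1$, the threshold becomes $(p+1)-2 = p-1$, but simultaneously all leaf labels except the last $i$ free leaves are incremented by $1$, so the set $\{\text{descendant leaves of } t \text{ with label} \leq p-1 \text{ in } T\}$ equals $\{\text{descendant leaves with label} \leq p-2 \text{ in } T'\}$ — the incremented leaves still fall on the correct side of the shifted threshold, and the free leaves (label $-1$, unchanged) stay below it. Hence all old node labels are preserved, $u$ gets label equal to the number of descendant free leaves of $u$ in $T'$ that were \emph{not} among the last $i$, together with... here I need the identity that $u$'s label is $i$: indeed the $i$ untouched free leaves are precisely those at label $-1$, and at depth $1$ we need label $\leq -1$, so $u$'s label is $i$; thus $\varphi_T(T) = \DeltaB(\varphi_T(T'),i)$, matching $\phi_T$. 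Finally \textbf{type IV}, $T = \oplusT(T_1,i,T_2)$: the root of $T$ merges the root of $\DeltaT(T_1,i)$ with that of $T_2$; by the type II analysis the subtree coming from $T_1$ contributes labels as in $\DeltaB(\varphi_T(T_1),i)$ (in particular its root-child carries label $i$), the subtree from $T_2$ keeps its $\varphi_T$-labels by the induction hypothesis and the locality observation, and the merged root is labeled $\flT(T) = i + \flT(T_2) = \rootB(\DeltaB(\varphi_T(T_1),i)) + \rootB(\varphi_T(T_2))$, which is exactly the root label of $\oplusB(\varphi_T(T_1),i,\varphi_T(T_2))$; so $\varphi_T(T) = \oplusB(\varphi_T(T_1),i,\varphi_T(T_2)) = \phi_T(T)$.

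The main obstacle I anticipate is the bookkeeping in the type II case: one must argue carefully that incrementing leaf labels together with incrementing depths leaves the descendant-leaf counts invariant for the \emph{transplanted} part of the tree, while simultaneously pinning down that the single new node of depth $1$ receives label exactly $i$ — this hinges on the precise definition of $\DeltaT$ ("add $1$ to each leaf except for the last $i$ free leaves") and on the first condition of decorated trees (a leaf's label is strictly less than its parent's depth), which guarantees that the only leaves visible to a depth-$1$ node under the threshold $p-2 = -1$ are precisely free leaves. Once the type II computation is nailed down, type IV is a mechanical combination of type II with the induction hypothesis, and types I and III are immediate; so I would write type II in full detail and treat the others briefly.
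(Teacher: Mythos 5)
Your proposal is correct and follows essentially the same route as the paper: induction on the four-case recursive decomposition, with the crux being that in the type II case the simultaneous shift of depths and leaf labels under $\DeltaT$ preserves the descendant-leaf counts defining the $\varphi_T$-labels, while the new depth-$1$ node receives label $i$ because only the $i$ untouched free leaves fall under the threshold $\leq -1$. Your write-up is in fact somewhat more explicit than the paper's on this threshold bookkeeping, but the argument is the same.
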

\begin{proof}
We proceed by induction on $T$. In the following, we will consider $T$ in different types, and we suppose that any decorated tree $T'$ whose size is smaller than $T$ satisfies $\varphi_T(T') = \phi_T(T')$.
\begin{itemize}
\item \textbf{Type I}: This is the base case, and we verify that $\varphi_T(T) = \phi_T(T)$ in this case.
\item \textbf{Type II}: In this case, we have $T = \DeltaT(T',i)$ for some $T'$ and $i$, thus $\phi_T(T) = \DeltaB(\varphi_T(T'),i)$ by induction hypothesis, and we only need to prove that $\varphi_T(T) = \DeltaB(\varphi_T(T'),i)$. Let $u$ be the root of $T'$. The operation $\DeltaT$ increments leaf labels and depth of internal nodes by $1$, except for some free leaves. Therefore, $\varphi_T(T)$ is $\varphi_T(T')$ attached to a new node as root, only changing the label of $u$, which is $i$ in $\varphi_T(T)$ since there are $i$ free leaves in the descendants of $u$ in $T$. This construction agrees with $\DeltaB$.
\item \textbf{Type III}: In this case, $T$ is obtained by attaching a leaf $\ell$ to another tree $T'$. By the first condition of decorated trees, $\ell$ must be free, which implies that $\rootB(\varphi_T(T)) = \rootB(\varphi_T(T'))+1=\rootB(\phi_T(T))$. Other parts of $\varphi_T(T)$ also agree with those of $\phi_T(T)$.
\item \textbf{Type IV}: In this case, we have $T = \oplusT(T_1, i, T_2)$ for some $T_1$, $T_2$ and $i$, thus $\phi_T(T) = \oplusB(\varphi_T(T_1), i, \varphi_T(T_2))$ by induction hypothesis, and we only need to prove that $\varphi_T(T) = \oplusB(\varphi_T(T_1), i, \varphi_T(T_2))$. This is correct, since by the case of type II, we have $\varphi_T(\DeltaT(T_1,i)) = \DeltaB(\phi_T(T), i)$, and the identification of roots commutes with $\varphi_T$.
\end{itemize}
We have completed all cases, and by induction, we have $\varphi_T(T) = \phi_T(T)$ for all $T \in \mathcal{T}$.
\end{proof}

We now present the bijection $\mathrm{I}$ from decorated trees to synchronized intervals that was given in \tambijref{}. For a decorated tree $T$, we say that a leaf $\ell$ is the \tdef{certificate} of a non-root internal node $u$ if $\ell$ is the first leaf in left-to-right order that makes $u$ satisfy the second condition of decorated trees. We now define the \tdef{certificate-counting function} $c$ of $T$ as a function on leaves of $T$, such that for every leaf $\ell$, we have $c(\ell)$ the number of internal nodes whose certificate is $\ell$. This function $c$ is called the ``charges'' in \tambijref{}. We now define the bijection $\mathrm{I}$ from decorated trees to synchronized intervals as follows, with an example illustrated in Figure~\ref{fig:sync-bij-ex}. Given a decorated tree $T$, we perform a counter-clockwise depth-first traversal of $T$, and we get a Dyck path $\mathrm{Q}(T)$ that records the variation of depth in our traversal. For the other Dyck path $\mathrm{P}(T)$, we perform the same traversal starting with $\mathrm{P}(T)$ an empty word, then at each time we explore a new edge, we append an up step $u$ to $\mathrm{P}(T)$, and when we meet a leaf $\ell$, we append $d^{c(\ell)+1}$ to $\mathrm{P}(T)$. We then have the full $\mathrm{P}(T)$ after the traversal. The two paths $\mathrm{P}(T)$ and $\mathrm{Q}(T)$ form a synchronized interval, denoted by $\mathrm{I}(T)=[\mathrm{P}(T), \mathrm{Q}(T)]$.

\begin{figure}
  \begin{center}
    \includegraphics[page=20,scale=0.85]{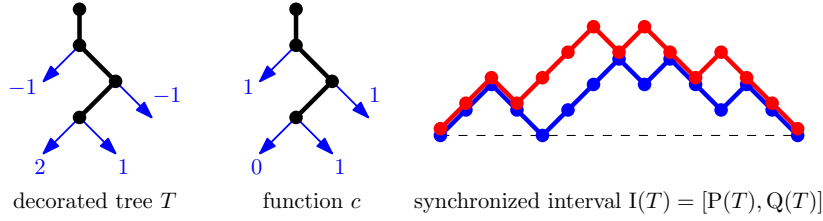}
  \end{center}
  \caption{An example of the bijection $\mathrm{I}$ from decorated trees to synchronized intervals}
  \label{fig:sync-bij-ex}
\end{figure}

\begin{prop}[Theorem~4.11 in \tambijref{}] \label{prop:sync-bij}
  The function $\mathrm{I}$ is a bijection from $\mathcal{I}$ to $\mathcal{T}$.
\end{prop}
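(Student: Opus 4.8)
The plan is to prove the stronger statement that $\mathrm{I}$ is the \emph{canonical} bijection with respect to the recursive decompositions, i.e. that $\phi_I\circ\mathrm{I}=\phi_T$, where $\phi_T\colon\mathcal{T}\to\mathcal{B}$ and $\phi_I\colon\mathcal{I}\to\mathcal{B}$ are the canonical maps attached to the four-case recursive decompositions of decorated trees and of synchronized intervals. Since both of those decompositions are isomorphic to the one of $\beta$-(1,0) trees, $\phi_T$ and $\phi_I$ are bijections, so the identity $\phi_I\circ\mathrm{I}=\phi_T$ forces $\mathrm{I}=\phi_I^{-1}\circ\phi_T$ to be a bijection; as a bonus this identifies $\mathrm{I}$ with the canonical bijection, exactly parallel to Theorem~\ref{thm:T-recursive-canonical}, and takes care of all size and statistic bookkeeping.

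The proof goes by induction on the number of edges of a decorated tree $T$, split according to its type. Along the way I would carry an auxiliary structural invariant on the lower path: the non-initial contacts of $\mathrm{P}(T)$ are exactly the lattice points reached right after the down-step block $d^{c(\ell)+1}$ of a free leaf $\ell$, and $\ell\mapsto(\text{that contact})$ is an order isomorphism from the free leaves of $T$ in traversal order to the non-initial contacts of $\mathrm{P}(T)$ from left to right; in particular $\flT(T)=\contI(\mathrm{I}(T))$. Condition~3 of decorated trees is precisely what guarantees that a non-free leaf never closes $\mathrm{P}(T)$ down to height $0$, so this is where that condition enters.

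The easy cases are types I, III and IV. For the type~I tree one checks directly that $\mathrm{I}(T)=[ud,ud]$. For type~III, $T$ is obtained from a decorated tree $T'$ by attaching a free leaf $\ell_0$ to the left of the root; since $\ell_0$ is a child of the root it certifies no internal node, so $c$ is unchanged on the leaves of $T'$ while $c(\ell_0)=0$, whence $\mathrm{P}(T)=ud\,\mathrm{P}(T')$ and $\mathrm{Q}(T)=ud\,\mathrm{Q}(T')$ — which is exactly the type~III shape $[udP_2,udQ_2]$ with $[P_2,Q_2]=\mathrm{I}(T')$; the canonical decompositions then match by induction, and the invariant propagates because the leading $ud$ returns to height $0$. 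The type~IV case $T=\oplusT(T_1,i,T_2)$ reduces to the type~II computation applied to $\DeltaT(T_1,i)$, together with the observation that the $T_1$-part and $T_2$-part of $T$ have disjoint leaf sets and neither side's internal nodes have descendants on the other side, so the certificate-counting function of $T$ restricts to those of $\DeltaT(T_1,i)$ and of $T_2$; the traversal then concatenates and gives $\mathrm{I}(T)=\oplusI(\mathrm{I}(T_1),i,\mathrm{I}(T_2))$, with the invariant glued along the shared contact.

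The heart of the argument — and the step I expect to be the main obstacle — is type~II, $T=\DeltaT(T',i)$. Here $\mathrm{Q}(T)=u\,\mathrm{Q}(T')\,d$ is immediate, and everything hinges on how the certificate-counting function changes. Let $u$ be the old root of $T'$, now a non-root node of depth $1$ in $T$: its certificate is the first descendant leaf of label $\le-1$, which by the definition of $\DeltaT$ is the $i$-th free leaf of $T'$ counted from the right, call it $\ell^\ast$. Every other internal node of $T$ is an internal node of $T'$ with both its depth and its relevant label threshold shifted up by $1$, so its set of qualifying descendant leaves, hence its certificate, is unchanged; therefore $c_T=c_{T'}$ except that $c_T(\ell^\ast)=c_{T'}(\ell^\ast)+1$. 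Feeding this into the definition of $\mathrm{P}$ yields $\mathrm{P}(T)=u\,P^\ell\,d\,P^r$, where $P^\ell P^r=\mathrm{P}(T')$ is split immediately after the $d$-block of $\ell^\ast$; by the contact invariant applied to $T'$ this split point is the $i$-th non-initial contact of $\mathrm{P}(T')$ from the right and $P^\ell$ is a Dyck path, so $\mathrm{P}(T)=\mathrm{P}(\DeltaI(\mathrm{I}(T'),i))$ and thus $\mathrm{I}(T)=\DeltaI(\mathrm{I}(T'),i)$. Combined with Proposition~\ref{prop:pi-delta-i} and the induction hypothesis $\phi_I(\mathrm{I}(T'))=\phi_T(T')$, this gives $\phi_I(\mathrm{I}(T))=\DeltaB(\phi_T(T'),i)=\phi_T(T)$; a final check of the contact invariant for $T$ (the leading $u$ raises all blocks inside $P^\ell$ by one height, destroying their contacts, while the extra $d$ on $\ell^\ast$ restores exactly one contact at the end of $uP^\ell d$, matching the fact that the free leaves of $T$ are those of $T'$ at or after $\ell^\ast$) closes the induction.
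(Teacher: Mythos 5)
Your proposal is correct and coincides with the paper's own route: although the paper formally defers Proposition~\ref{prop:sync-bij} to the external reference, the argument you give is exactly the paper's Theorem~\ref{I-recursive-canonical} --- the identity $\phi_T=\phi_I\circ\mathrm{I}$ proved by induction on the four types with type~II as the key case --- and your ``contact invariant'' is precisely Lemma~\ref{lem:free-leaf-contact} together with the remark following it. Since $\phi_T$ and $\phi_I$ are canonical bijections attached to isomorphic recursive decompositions, bijectivity of $\mathrm{I}$ follows just as you say.
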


Readers are referred to \tambijref{} for a description of the reverse direction and a detailed proof of Proposition~\ref{prop:sync-bij}.

We have the following lemma that relates free leaves in a decorated tree to contacts of the lower path of the corresponding synchronized interval.

\begin{lem} \label{lem:free-leaf-contact}
  Let $T$ be a decorated tree and $I = \mathrm{I}(T) = [P,Q]$ its corresponding synchronized interval. For a free leaf $\ell$ of $T$, let $P_\ell$ be the initial segment of $P$ given by the traversal just after visiting $\ell$. Then $P_\ell$ is a Dyck path.
\end{lem}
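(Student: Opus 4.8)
The plan is to prove that $P_\ell$ is balanced, i.e. that it has as many up steps as down steps; since $P_\ell$ is by construction a prefix of the Dyck path $P=\mathrm{P}(T)$, it never goes strictly below the $x$-axis, so once we know it is balanced (and it is visibly nonempty) it is a Dyck path. So the whole content is a counting identity about prefixes of $\mathrm{P}(T)$.

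First I would set up the bookkeeping. Fix the depth-first traversal that produces $\mathrm{P}(T)$ and $\mathrm{Q}(T)$ (which visits leaves in left-to-right order), and look at the instant right after $\ell$ is visited, meaning right after the block $d^{c(\ell)+1}$ attached to $\ell$ has been appended. Write $\Lambda_\ell$ for the set of leaves visited up to and including $\ell$, and $N_\ell$ for the set of non-root internal nodes visited by that instant. On one hand, each up step of $P_\ell$ is produced exactly once, when an edge is first traversed downwards, i.e. when the child endpoint of that edge is first visited; hence the number of up steps of $P_\ell$ equals the number of visited non-root vertices, namely $|N_\ell|+|\Lambda_\ell|$. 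On the other hand, each leaf $\ell'\in\Lambda_\ell$ contributes $c(\ell')+1$ down steps, so the number of down steps of $P_\ell$ is $|\Lambda_\ell|+\sum_{\ell'\in\Lambda_\ell}c(\ell')$; and since every non-root internal node has a unique certificate leaf, $\sum_{\ell'\in\Lambda_\ell}c(\ell')$ is exactly the number $|N'_\ell|$ of non-root internal nodes whose certificate belongs to $\Lambda_\ell$. Thus it remains to show $|N_\ell|=|N'_\ell|$, and the cleanest route is to prove that the two sets coincide.

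The inclusion $N'_\ell\subseteq N_\ell$ is immediate, since if the certificate of a node $u$ has been visited then so has its ancestor $u$. The reverse inclusion $N_\ell\subseteq N'_\ell$ is where the hypothesis that $\ell$ is \emph{free} is used, and it is the only place where the defining conditions of decorated trees enter. Take $u\in N_\ell$ and split into two cases. If $u$ lies on the path from the root to $\ell$, then $\ell$ is a descendant leaf of $u$; being free, $\ell$ has label $-1$, and being non-root, $u$ has some depth $p\ge 1$, so $-1\le p-2$ and $\ell$ already witnesses the second condition of decorated trees for $u$. Hence the certificate of $u$, which by definition is the \emph{first} such witness in traversal order, is visited at or before $\ell$, so it lies in $\Lambda_\ell$. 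If instead $u$ has been visited but is not an ancestor of $\ell$, then at the moment the traversal reaches $\ell$ the subtree rooted at $u$ has already been completely explored (we have since backed out of $u$), so all descendant leaves of $u$ — in particular its certificate — lie in $\Lambda_\ell$. Either way $u\in N'_\ell$, so $N_\ell=N'_\ell$, $P_\ell$ is balanced, and the lemma follows.

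I expect the only genuine subtlety to be the first case of the last step: one must observe that a free leaf automatically certifies every internal ancestor of it, which is exactly what forces the use of $p\ge 1$ (hence of non-rootness) and which explains why the statement really needs $\ell$ free — the analogous prefix ending at a non-free leaf need not return to the axis. Everything else is routine, though I would double-check the convention that ``just after visiting $\ell$'' includes appending $\ell$'s $d$-block, since otherwise the up/down counts above would not match.
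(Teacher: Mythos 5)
Your proof is correct and follows essentially the same route as the paper: count up steps by visited non-root vertices, count down steps via the certificate-counting function, and use the freeness of $\ell$ (for ancestors) together with the prefix-order structure of the DFS (for non-ancestors) to match the two counts. The only cosmetic difference is that you establish the equality $|N_\ell|=|N'_\ell|$ by proving both inclusions directly, whereas the paper proves only the inequality $c(\ell)+\sum_i c(\ell_i)\geq m$ and deduces equality from the fact that the prefix of a Dyck path cannot have more down steps than up steps.
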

\begin{proof}
  Let $\ell_1, \ldots, \ell_k$ be the leaves that comes before $\ell$ in traversal order, and $m$ the number of non-root internal nodes visited before visiting $\ell$. By the construction of $\mathrm{I}(T)$, we have $m+k+1$ up steps and $c(\ell) + \sum_{i=1}^k c(\ell_i) + k + 1$ down steps in $P_\ell$. Let $u$ be a non-root internal node that is visited before $\ell$. If $u$ is not an ancestor of $\ell$, then its certificate, which is one of its descendant leaf, must precede $\ell$ in the traversal order. If $u$ is an ancestor of $\ell$, since the free leaf $\ell$ has a label $-1$, the certificate of $u$ cannot come after $\ell$. Therefore, all non-root internal nodes have their certificate being $\ell$ or preceding $\ell$, which make $c(\ell)+\sum_{i=1}^k c(\ell_i) \geq m$. However, since $P$ is a Dyck path, we must have equality, which means $P_\ell$ is also a Dyck path.
\end{proof}

By Lemma~\ref{lem:free-leaf-contact}, we see that free leaves in a decorated tree are in one-to-one correspondence with non-initial contacts in the lower path of the corresponding synchronized interval.

We now prove that the bijection $\mathrm{I}$ is canonical with respect to recursive decompositions of decorated trees and synchronized intervals.

\begin{thm} \label{I-recursive-canonical}
  We have $\phi_T = \phi_I \circ \mathrm{I}$, where $\phi_T$ (\emph{resp.} $\phi_I$) is the canonical bijection from decorated trees (\emph{resp.} synchronized intervals) to $\beta$-(1,0) trees.
\end{thm}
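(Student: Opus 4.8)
The plan is to mimic exactly the strategy used for Theorem~\ref{thm:T-recursive-canonical}: proceed by structural induction on the decorated tree $T$, splitting into the four types and showing in each case that $\mathrm{I}$ intertwines the recursive operations $\DeltaT,\PiT,\oplusT,\hdT,\tlT$ on decorated trees with the operations $\DeltaI,\PiI,\oplusI,\hdI,\tlI$ on synchronized intervals. Since $\phi_T$ and $\phi_I$ are both defined as the canonical bijections identifying the four-case recursive decompositions with that of $\beta$-(1,0) trees, establishing that $\mathrm{I}$ commutes with the recursive operations case by case immediately yields $\phi_I\circ\mathrm{I}=\phi_T$ by induction. The base case (type I) is the trivial verification that $\mathrm{I}$ sends the unique one-edge-type decorated tree to the interval $[ud,ud]$.

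First I would handle type II. Here $T=\DeltaT(T',i)$ for a unique smaller decorated tree $T'$ and $1\le i\le\flT(T')$. I would trace through the counter-clockwise depth-first traversal defining $\mathrm{P}$ and $\mathrm{Q}$: attaching $T'$ below a new root adds one up step at the very start of the traversal and, since $\DeltaT$ increments all depths except those of the last $i$ free leaves, the path $\mathrm{Q}(T)$ becomes $u\,\mathrm{Q}(T')\,d$. For $\mathrm{P}(T)$, I need to check how the charges $c$ change: the new root is the root, hence contributes no certificate, and the old root $u$ of $T'$ becomes a non-root internal node of depth $1$ whose certificate is the first descendant leaf of label $\le -1$, i.e. the first free leaf among the last $i$ of $T'$; so exactly the leaves that were free in $T'$ but get label $0$ in $T$ lose their free status, and the contact structure of $\mathrm{P}$ shifts accordingly. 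The key computation, using Lemma~\ref{lem:free-leaf-contact}, is that the distinguished contact of $\mathrm{P}(T)$ corresponding to the $i$-th free leaf from the right matches the pointing in $\DeltaI$, and that $\contI(\mathrm{I}(T))=i$; then Proposition~\ref{prop:pi-delta-i} forces $\mathrm{I}(T)=\DeltaI(\mathrm{I}(T'),i)$. For type III, $T$ is $T'$ with an extra free leaf $\ell$ as leftmost child of the root; in the traversal this prepends $u\,d$ (one up step, then $d^{c(\ell)+1}=d$ since $c(\ell)=0$: a leftmost leaf can be nobody's certificate as it is first in left-to-right order) to both $\mathrm{P}$ and $\mathrm{Q}$, so $\mathrm{I}(T)=[ud\,\mathrm{P}(T'),ud\,\mathrm{Q}(T')]$, which is exactly the type III operation on intervals (prepending $ud$ to both paths), and $\contI$ increases by $1$, matching $\rootB$. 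Type IV then follows formally: $T=\oplusT(T_1,i,T_2)$ means identifying the roots of $\DeltaT(T_1,i)$ and $T_2$; the traversal of $T$ concatenates the traversal of $T_2$'s root's subtrees with $\DeltaT(T_1,i)$ placed first, so $\mathrm{P}(T)$ and $\mathrm{Q}(T)$ are the concatenations dictated by $\oplusI$, and invoking the already-proven type II identity $\mathrm{I}(\DeltaT(T_1,i))=\DeltaI(\mathrm{I}(T_1),i)$ together with Proposition~\ref{prop:oplus-hd-tl-i} closes the case.

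The main obstacle I anticipate is the bookkeeping in type II: carefully verifying that the free leaves of $T'$ that survive as free leaves of $T$ are precisely the last $i$ in traversal order, and that the charges $c$ of the remaining leaves are unchanged while the old root contributes its single new certificate to the correct leaf, so that the contact of $\mathrm{P}(T)$ selected by $\DeltaI$ (the $i$-th contact from the right) is genuinely the one produced by the traversal. This requires combining the third defining condition of decorated trees (free leaves never precede label-$0$ leaves within a direct subtree) with Lemma~\ref{lem:free-leaf-contact}, and is essentially the content that makes Proposition~\ref{prop:pi-delta-i}'s uniqueness clause applicable. Once type II is pinned down, types III and IV are routine, and the induction assembling $\phi_I\circ\mathrm{I}=\phi_T$ is immediate from the definitions of the canonical bijections.
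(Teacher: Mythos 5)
Your proposal is correct and follows essentially the same route as the paper: induction on the size of $T$ over the four types, tracing how the traversal and the certificate-counting function behave under $\DeltaT$ and $\oplusT$, using Lemma~\ref{lem:free-leaf-contact} to locate the distinguished contact in the type II case, and invoking Propositions~\ref{prop:pi-delta-i} and~\ref{prop:oplus-hd-tl-i} to close types II and IV. One small fix in type III: the reason $c(\ell)=0$ is that $\ell$, being a child of the root, is not a descendant of any non-root internal node and so cannot be anyone's certificate --- not that it comes first in left-to-right order (the certificate is precisely the \emph{first} qualifying leaf, so coming early would not disqualify it).
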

\begin{proof}
  We proceed by induction on the size of decorated trees. Let $T$ be a decorated tree. Suppose that every decorated tree $T'$ with strictly fewer nodes than $T$ satisfies $\phi_T(T') = \phi_I(\mathrm{I}(T'))$. We now prove that $\phi_T(T) = \phi_I(\mathrm{I}(T))$. We denote by $I = [P,Q] = \mathrm{I}(T)$ the corresponding synchronized interval of $T$.
  \begin{itemize}
  \item \textbf{Type I}: This is the base case, and we easily check $\phi_T(T)=\phi_I(\mathrm{I}(T))$.
  \item \textbf{Type II}: We have $T = \DeltaT(T',i)$ with $T'$ smaller than $T$ in this case. Let $[P',Q']=\mathrm{I}(T')$. By the definition of $\mathrm{I}$, we have $Q=u Q' d$. By the definition of $\DeltaT$, the certificate of a non-root node in $T'$ remains the same in $T$. For the has-been root of $T'$, since its depth is $1$ in $T$, its certificate is the first free leaf in $T$. Therefore, by Lemma~\ref{lem:free-leaf-contact}, we have $P = u P_1' d P_2'$ with $P_1'$ a non-empty Dyck path, and $\contI(P_2')=i$. We thus have $I = \DeltaI(\mathrm{I}(T'),i)$, and by the induction hypothesis we conclude this case.
  \item \textbf{Type III}: In this case, the leftmost child $\ell$ of the root of $T$ is a leaf, and by the definition of decorated trees, it has a label $-1$. Let $T'$ be the tree $T$ with $\ell$ removed, which remains a decorated tree. Let $[P',Q']=\mathrm{I}(T')$, it is clear that $Q = udQ'$. Since $\ell$ has no influence on the certificate of all the non-root internal nodes, we have $P=udP'$. Therefore, $I$ is also of type III and constructed from $\mathrm{I}(T')$.
  \item \textbf{Type IV}: We have $T = \oplusT(T_1, i, T_2)$ with $T_1, T_2$ smaller than $T$ in this case. Let $[P_1, Q_1] = \mathrm{I}(T_1)$ and $[P_2, Q_2]=\mathrm{I}(T_2)$. From the definition of $\oplusT$, we have $Q = u Q_1 d Q_2$. For the lower path $P$, we observe that the certificate of a node $u$ in $T_2$ remains the same in $T$, as the certificate must be a descendant leaf of $u$. Therefore, the part of $P$ due to traversal in the part due to $T_2$ is exactly $P_2$. For the part due to $T_1$, we can refer to the type II case. Therefore, we have $P = u P_{1,1} d P_{1,2} P_2$, with $\contI(P_{1,2})=i$, which then implies that $I = \oplusI(\mathrm{I}(T_1),i,\mathrm{I}(T_2))$. By the induction hypothesis, we conclude this case.
  \end{itemize}

  We have verified all four cases, and by induction on the size of $T$, we have that $\phi_T = \phi_I \circ \mathrm{I}$.
\end{proof}

\section{Map duality, involution $\boldsymbol{h}$ and order-reversing involution in the Tamari lattice} \label{sec:inv}

In this section, we study the mysterious involution $h$ on $\beta$-(1,0) trees defined by Claesson, Kitaev and Steingr\'imsson in \dnsref{}. The following description of $h$ is adapted from \dsymref{} to our recursive decomposition.

The involution $h$ is defined recursively on the set of $\beta$-(1,0) trees as follows. We first define the \tdef{rightmost path} of a plane tree $T$ to be the path from the root to the rightmost leaf, and $\rpath(T)$ the number of edges on the rightmost path. Let $B$ be a $\beta$-(1,0) tree. We will define $h(B)$ according to the tree type of $B$, as illustrated in Figure~\ref{fig:h-desc-def}.
\begin{itemize}
\item \textbf{Type I}: This is the base case, which is the tree with only one leaf. In this case, $h(B) = B$.
\item \textbf{Type II}: $B = \DeltaB(B',i)$ for some $B'$ and $i$. To construct $h(B)$, we first take the $\beta$-(1,0) tree $h(B')$, then add $1$ to the first $i$ nodes on its rightmost path, starting from the root, and we finish by attaching a new leaf as the rightmost child of the $i$-th node.
\item \textbf{Type III}: In this case, let $B'$ be the $\beta$-(1,0) tree resulted from deleting the leftmost child of the root and decreasing the root label by $1$. To get $h(B)$ from $h(B')$, we attach a new leaf to the rightmost leaf of $h(B')$, which becomes an internal node.
\item \textbf{Type IV}: In this case $B = \oplusB(B_1, i, B_2)$ for some $B_1, B_2$ and $i$. Let $B_1' = h(\DeltaB(B_1,i))$, $B_2' = h(B_2)$ and $\ell$ the rightmost leaf of $B_2$. By replacing $\ell$ with the root of $B_1'$ labeled by $1$ in $B_2'$, we get the $\beta$-(1,0) tree that we define as $h(B)$.
\end{itemize}

\begin{figure}
\centering
\includegraphics[page=15, width=\textwidth]{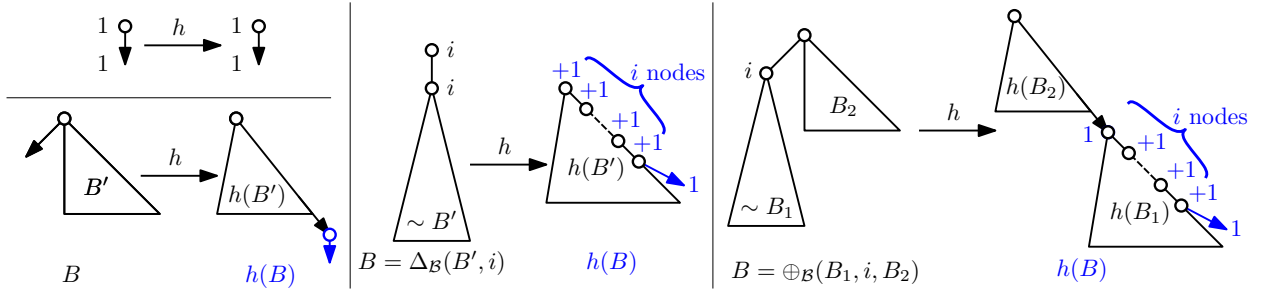}
\caption{Definition of $h$} \label{fig:h-desc-def}
\end{figure}

Readers familiar with the definition of the involution $h$ in \dsymref{} may notice that our definition here differs from the original in the ``decomposable cases'' (type III, IV), where we decompose from left to right while the original definition proceeds from right to left. However, we can see that the two definitions are the same by looking at the result of successive iterations as in Figure~5 of \dinvref{}.

The following results were proved in \dsymref{} and \dinvref{}.

\begin{thm}[\dsymref{}] \label{thm:h-involution}
The function $h$ is an involution on $\mathcal{B}_n$ for every $n \geq 1$. Furthermore, for a $\beta$-(1,0) tree $B$, we have $\rootB(B) = \rpath(h(B))$.
\end{thm}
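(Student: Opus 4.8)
The plan is to prove Theorem~\ref{thm:h-involution} by induction on the number of edges, treating the four tree types of the recursive decomposition exactly as in the definition of $h$. For a $\beta$-(1,0) tree $B$ with root label $\rootB(B)=k$, I would simultaneously establish two facts: (a) $h(h(B))=B$, and (b) $\rpath(h(B))=k$. Fact (b) is the cheaper one and I would dispatch it first: in the type~I base case $\rpath(h(B))=\rpath(B)=1=\rootB(B)$; in type~II, $B=\DeltaB(B',i)$ so $\rootB(B)=i$, and the construction of $h(B)$ adds a new rightmost leaf hanging off the $i$-th node of the rightmost path of $h(B')$, so the rightmost path of $h(B)$ has exactly $i$ edges on it (the first $i$ of the old path now lead into the new leaf), giving $\rpath(h(B))=i$; in type~III, $B$ comes from $B'$ with $\rootB(B)=\rootB(B')+1$ but the new leaf is attached to the rightmost leaf of $h(B')$, so $\rpath(h(B))=\rpath(h(B'))+1=\rootB(B')+1$; in type~IV, $B=\oplusB(B_1,i,B_2)$ so $\rootB(B)=i+\rootB(B_2)$, and $h(B)$ is obtained by grafting $B_1'=h(\DeltaB(B_1,i))$ in place of the rightmost leaf of $B_2'=h(B_2)$, so the rightmost path of $h(B)$ is the rightmost path of $B_2'$ (length $\rootB(B_2)$ by induction, using that $\rpath(h(B_2))=\rootB(B_2)$) followed by the rightmost path of $B_1'$ (length $i$, since $\rootB(\DeltaB(B_1,i))=i$ by Proposition~\ref{prop:delta-pi-b} and induction applies to $\DeltaB(B_1,i)$, which is smaller than $B$), yielding $i+\rootB(B_2)$.

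For fact (a), the idea is that the four types of $B$ and the four types of $h(B)$ are matched up as a permutation of $\{\mathrm{I},\mathrm{II},\mathrm{III},\mathrm{IV}\}$, and then one checks that running the construction of $h$ twice recovers $B$. The type matching that makes everything work: type~I maps to type~I; type~II (a long pendant edge at the top) maps to type~III (a long pendant edge at the bottom, i.e.\ a new leaf on the old rightmost leaf) and vice versa; type~IV maps to type~IV. Concretely, in type~II we have $B=\DeltaB(B',i)$, and $h(B)$ is $h(B')$ with $1$ added to its first $i$ rightmost-path nodes and a new leaf hung from the $i$-th of them; since $i=\rpath(h(B'))$ would force the new leaf to be the actual rightmost leaf (making it type~II again), but $i\le\rootB(B')=\rpath(h(B'))$ by Theorem wait—$i\le\rootB(B')$ and $\rpath(h(B'))=\rootB(B')$, so $i\le\rpath(h(B'))$; when $i<\rpath(h(B'))$ this is genuinely type~III, and the edge case $i=\rpath(h(B'))$ needs separate bookkeeping. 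This is the main obstacle: carefully verifying that the ``add $1$ to the first $i$ rightmost-path nodes'' operation is exactly inverted by the type~III undo operation (delete leftmost child, decrease root label), and that the integer $i$ is faithfully recorded as a rightmost-path length, so that the parameter $i$ is recovered on the way back. The key lemma I would isolate is: for $B$ of type~II with $B=\DeltaB(B',i)$, applying the type~III reverse operation to $h(B)$ gives $h(B')$, and the number of edges stripped equals $i$; combined with fact (b) this gives $h(h(B))=h(h(B'))$ with the extra $\DeltaB(\cdot,i)$ reattached correctly, hence $=\DeltaB(B',i)=B$ by the inductive hypothesis $h(h(B'))=B'$.

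In type~IV, $B=\oplusB(B_1,i,B_2)$, and $h(B)$ is $h(B_2)$ with its rightmost leaf replaced by (a root labeled $1$ carrying) $h(\DeltaB(B_1,i))$. I would argue $h(B)$ is again type~IV: its leftmost child of the root is the leftmost child of the root of $h(B_2)$, and as long as $h(B_2)$ is of type~IV (or more precisely has a ``head/tail'' split respected by the grafting) this holds; the genuinely delicate point is when $B_2$ is the type~I tree, so $h(B_2)$ is a single leaf and the grafting degenerates—then $h(B)=h(\DeltaB(B_1,i))$ with root relabeled, which is exactly the type~II output, and indeed $\oplusB(B_1,i,\text{type I})$ should be viewed through the lens that $\tlB$ of it is the type~I tree; I would handle this by noting $\oplusB(B_1,i,B_2)$ with $B_2$ type~I is really the same as $\DeltaB(B_1,i)$ up to the type~III/IV distinction, and cross-reference the type~II/III analysis. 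Assuming that bookkeeping, the undo of type~IV on $h(B)$ peels off $h(\DeltaB(B_1,i))$ as $\hdB$-part and $h(B_2)$ as $\tlB$-part (identifiable because $\rpath(h(\DeltaB(B_1,i)))=i$ by fact (b), which tells us exactly where to cut), applying $h$ to each returns $\DeltaB(B_1,i)$ and $B_2$ by induction, and reassembling via $\oplusB$ using Proposition~\ref{prop:oplus-hd-tl-b} gives back $B$. Type~III is symmetric to type~II. The main obstacle throughout is the edge-case management where the recursive parameter $i$ equals a boundary value ($i=\rootB$, or a factor tree being type~I), which is where the left-to-right versus right-to-left discrepancy with \dsymref{} could in principle bite; I would resolve it once and for all by the remark already in the text that successive iterations agree with Figure~5 of \dinvref{}, and otherwise just verify the boundary cases by hand.
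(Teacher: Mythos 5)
Your induction for the second claim, $\rootB(B)=\rpath(h(B))$, is correct and complete, and it is more elementary than what the paper does. The problem is the involution property. Your strategy rests on the claim that $h$ permutes the four types as I$\to$I, II$\leftrightarrow$III, IV$\to$IV, but the type of $h(B)$ is simply not determined by the type of $B$. Concretely, let $B'$ be the tree whose root, labelled $2$, has two leaf children; then $h(B')$ is the path of length $2$. For $B=\DeltaB(B',2)$, a type II tree, the construction attaches the new leaf to the \emph{second} node of the rightmost path of $h(B')$, so the root of $h(B)$ still has a single internal child and $h(B)$ is again of type II (in fact $h(B)=B$ here). Taking instead $i=1$ with the same $B'$ gives $h(B)$ of type IV. This is not an edge case you can bookkeep away: whenever $i\geq 2$ the new leaf sits strictly below the root, so the root of $h(B)$ has exactly the children of the root of $h(B')$, and the type of $h(B)$ is governed by the shape of $h(B')$ near \emph{its} root (equivalently by statistics of $B'$ along its rightmost path), not by the type of $B$. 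Your ``key lemma'' fails for the same reason: the type III undo deletes the \emph{leftmost child of the root}, whereas the type II step of $h$ inserts a leaf as the rightmost child of the $i$-th node of the \emph{rightmost path}; these act at opposite corners of the tree and do not invert one another (they only happen to agree on the two-edge example because the two leaves there are interchangeable).

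This mismatch --- $h$ builds its image along the rightmost path, while the recursive decomposition you want to run on $h(B)$ cuts at the root and its leftmost child --- is exactly what makes the direct proof of the involution property hard, and it is why the paper does not attempt one. The paper instead proves $\dual=\mathrm{T}^{-1}\circ\phi_T^{-1}\circ h\circ\phi_T\circ\mathrm{T}$ (Theorem~\ref{thm:duality-equiv-hT} combined with Proposition~\ref{prop:hT-equiv-h}) and concludes that $h$ is an involution because map duality is one, with $\rootB(B)=\rpath(h(B))$ falling out of the transfer of the statistics $\opname{deg}$ and $\opname{fdeg}$ under duality. To repair your approach you would have to abandon the type-permutation picture and carry out the full case analysis of the original proof in \dsymref{}, which tracks how the second application of $h$ re-decomposes the tree along a completely different axis; alternatively, keep your (correct) proof of the $\rpath$ identity and import the involution property from the duality argument.
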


\begin{thm}[\dinvref{}] \label{thm:h-autodual}
The number of fixed points of $h$ in $\mathcal{B}_n$ is exactly the number of self-dual NSP-maps in $\mathcal{M}_n$.
\end{thm}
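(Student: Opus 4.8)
The plan is to prove Theorem~\ref{thm:h-autodual} by reducing it to a statement purely about $\beta$-(1,0) trees, namely that $h$ is conjugate to the map duality $\dual$ transported through the canonical bijections. Concretely, the strategy is to show that the diagram from the introduction commutes, i.e.
\[ \phi_M \circ \dual \circ \phi_M^{-1} = h \]
as maps on $\mathcal{B}_n$. Once this identity is established, a $\beta$-(1,0) tree $B$ is a fixed point of $h$ if and only if $\phi_M^{-1}(B)$ is a self-dual NSP-map; since $\phi_M$ is a size-preserving bijection from $\mathcal{M}_n$ to $\mathcal{B}_n$ (by Theorem~\ref{thm:T-recursive-canonical} together with the canonicity of $\phi_M$ and $\phi_T$), this gives a bijection between the fixed-point sets, and hence the equality of their cardinalities.

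The first step is to make the notion of duality on $\beta$-(1,0) trees concrete by transporting the parallel decomposition of NSP-maps through $\phi_M$; equivalently, I would work directly on NSP-maps and understand how $\dual$ interacts with the four parallel types and with the operators $\PiM, \DeltaM, \oplusM, \hdM, \tlM$. The key geometric observation to extract is that map duality exchanges "contracting the root edge" with "deleting the root edge": the parallel decomposition of $M$ (which contracts the root and cuts the resulting cut-vertex) should correspond, under $\dual$, to the series decomposition of $\dual(M)$ (which deletes the root and follows the chain of cut vertices). This is exactly the relationship promised in Section~\ref{sec:series} of the paper ("parallel and series decompositions... are essentially related by map duality"), so I would invoke that analysis. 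From it, one reads off how $\dual$ acts type-by-type: a type I map is self-dual (the double edge); for type II, $\dual$ should turn the $\DeltaM(M',i)$ structure into an attachment along a path of length controlled by $i$; type III (a loop after contraction, i.e. a duplicated root) dualizes to an isthmus-type configuration handled by the $\rpath$-extension in $h$'s type III; and type IV, built by $\oplusM$, dualizes to the "graft at the rightmost leaf" operation in $h$'s type IV. Matching these four cases against the recursive definition of $h$ (Figure~\ref{fig:h-desc-def}), and using Theorem~\ref{thm:h-involution} which tells us $\rootB(B) = \rpath(h(B))$ — this is precisely the bookkeeping that makes the index $i$ land in the right place on the rightmost path — one verifies the commutation by induction on $n+1 = $ the number of edges of $M$.

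The main obstacle, I expect, is the careful verification of the type IV (and the intertwined type II) cases: one must check that the integer parameter $i \in \{1,\dots,\opname{deg}(M_1)\}$ appearing in $\oplusM$ is sent by duality to the integer parameter appearing in $h$'s type IV clause (where $h(\DeltaB(B_1,i))$ adds $1$ to the first $i$ nodes of a rightmost path and attaches a leaf at the $i$-th), and that the recursion "bottoms out" correctly — in particular that $\dual$ really does swap $\hdM/\tlM$-style decomposition with the rightmost-leaf grafting, which requires a genuine planar-duality argument about where the core face and outer face sit after dualizing. A secondary subtlety is orientation: the paper notes its $h$ decomposes left-to-right whereas the original \dsymref{} goes right-to-left, so I would double-check that the "rightmost path / rightmost leaf" conventions in $h$ are consistent with the "innermost component / clockwise next edge" conventions in the parallel decomposition, possibly at the cost of composing with the mirror involution $\mir$ at one spot; since $\mir$ conjugates $h$ to itself on fixed-point counts this does not affect the final enumeration. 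Modulo these checks, the theorem follows, and as a bonus we obtain the promised bijective explanation of the result of \dinvref{} and recover \cite{count-self-dual}'s count as the number of $h$-fixed points.
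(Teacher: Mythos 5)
Your overall strategy is exactly the paper's: establish the conjugation identity $\phi_M \circ \dual \circ \phi_M^{-1} = h$ and then transport fixed points through the size-preserving bijection $\phi_M = \phi_T \circ \mathrm{T}$. That reduction is correct and the fixed-point bookkeeping at the end is fine (including your remark that conjugating $h$ by an involution does not change the number of fixed points). The difference — and the gap — lies in how you propose to prove the conjugation identity. You want to read it off from the fact that duality exchanges the parallel and series decompositions. But that fact (Theorem~\ref{thm:series-to-parallel}) only yields $\phi_S = \phi_M \circ \dual \circ \NR$, i.e.\ it identifies $\phi_M \circ \dual$ with $\phi_S \circ \NR^{-1}$; it says nothing about $h$ unless you \emph{separately} prove that $\phi_S \circ \NR^{-1} = h \circ \phi_M$, which is logically equivalent to the statement you are trying to establish. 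So the series/parallel correspondence cannot carry the weight you assign to it: the genuine content is still the case-by-case verification that the decomposition of $\dual(M)$ reproduces the four recursive clauses of $h$, and that verification is precisely what you defer as ``bookkeeping.''

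The reason this verification is not routine is that the type II and type IV clauses of $h$ act \emph{non-locally} on the already-computed image: they add $1$ to the first $i$ labels on the rightmost path of $h(B')$ and attach or graft at the $i$-th node there. To match this against duality you need to know, on the map side, which dual edges end up on the rightmost path of the image tree — the paper's key observation is that the rightmost path of $\mathrm{T}(\dual(M))$ consists of the dual edges of the edges incident to the root vertex of $M$, and this is extracted from the DFS construction of $\mathrm{T}$ on decorated trees (Theorem~\ref{thm:duality-equiv-hT}, relayed to $\mathcal{B}$ by Proposition~\ref{prop:hT-equiv-h}). Your sketch has no analogous device: working purely with $\phi_M$, $\oplusM$ and $\DeltaM$ you have no handle on ``the rightmost path of $\phi_M(\dual(M'))$'' in terms of $M'$, so the induction cannot close in the type II and IV cases as written. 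To repair the argument you would either need to introduce an intermediate object playing the role of the decorated trees (tracking the exploration order of root-incident edges), or prove a separate lemma characterizing the rightmost path of $\phi_M(N)$ map-theoretically; either way the missing step is the heart of the proof, not a technicality.
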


In the following, we will give a bijective proof of both theorems. In fact, we prove that the function $h$ on $\beta$-(1,0) trees corresponds via $\phi_M$ to the duality on NSP-maps. We start by describing the counterpart of $h$ in decorated trees, denoted by $\hT$. Let $T$ be a decorated tree. We define recursively $\hT(T)$ according to the tree type of $T$, as illustrated in Figure~\ref{fig:h-deco-def}.
\begin{itemize}
\item \textbf{Type I}: This is the base case, the tree with only one leaf, which is free. In this case, $\hT(T) = T$.
\item \textbf{Type II}: $T = \DeltaT(T',i)$ for some $T'$ and $i$. To construct $\hT(T)$, we first take $\hT(T')$, then attach a free leaf to the $i$-th node on the rightmost path as the rightmost child.
\item \textbf{Type III}: Let $T'$ be $T$ after removing $\ell$. To construct $\hT(T)$, we take $\hT(T')$ and attach a free leaf to its rightmost leaf, which becomes an internal node.
\item \textbf{Type IV}: $T = \DeltaT(T_1, i, T_2)$ for some $T_1, T_2$ and $i$. Let $T_1' = \hT(\DeltaT(T_1,i))$, $T_2' = \hT(T_2)$, and $\ell$ be the rightmost leaf of $T_2'$. By adding $\rpath(T_2')$ to every leaf of $T_1'$ except the rightmost one, then replacing $\ell$ by the resulting tree in $T_2'$, we get $\hT(T)$.
\end{itemize}

\begin{figure}
\centering
\includegraphics[page=16, width=\textwidth]{figure-inv.pdf}
\caption{Definition of $\hT$} \label{fig:h-deco-def}
\end{figure}

The following proposition confirms that $\hT$ is the equivalent of $h$ for decorated trees.
\begin{prop} \label{prop:hT-equiv-h}
  The function $\hT$ is equivalent to $h$ on $\beta$-(1,0) trees under the conjugation of the canonical bijection $\phi_T$ of decorated trees, that is, we have $\hT = \phi_T^{-1} \circ h \circ \phi_T$.
\end{prop}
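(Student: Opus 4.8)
The plan is to prove $\hT = \phi_T^{-1} \circ h \circ \phi_T$ by induction on the size of decorated trees, mirroring the structure of the definitions of $h$ and $\hT$ (and of the canonical bijection $\phi_T$), which are all organized along the same four tree types. Fixing a decorated tree $T$, I would assume $\hT(T') = \phi_T^{-1}(h(\phi_T(T')))$ for every decorated tree $T'$ with fewer nodes, and equivalently $\phi_T(\hT(T')) = h(\phi_T(T'))$; this is the form I would actually verify case by case. The two base cases (type I) are immediate since $h$, $\hT$ and $\phi_T$ all fix the unique one-leaf object.

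The heart of the argument is to check, in each of types II, III, IV, that applying $\phi_T$ to the recursive step defining $\hT$ produces exactly the recursive step defining $h$. For type II, $T = \DeltaT(T',i)$, so $\phi_T(T) = \DeltaB(\phi_T(T'),i)$; I would show that attaching a free leaf as the rightmost child of the $i$-th node on the rightmost path of $\hT(T')$ is carried by $\phi_T$ to attaching a leaf as the rightmost child of the $i$-th node on the rightmost path of $h(\phi_T(T'))$, using that $\phi_T$ preserves the underlying plane tree (disregarding labels) and that it converts the ``free-leaf count under a node'' bookkeeping into the node labels of the $\beta$-(1,0) tree — so that incrementing the labels of the first $i$ nodes on the rightmost path, as $h$ does, is precisely the effect on labels of grafting a free leaf at depth $i$ along the rightmost path. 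Here I would invoke Theorem~\ref{thm:h-involution}, namely $\rootB(B) = \rpath(h(B))$, to guarantee that the $i$-th node on the rightmost path exists and that $1 \le i \le \rpath(h(\phi_T(T'))) = \rootB(\phi_T(T')) = \flT(T')$, matching the constraint on $\DeltaT$. Type III is the easiest: $T$ comes from $T'$ by grafting a free leftmost leaf and $\phi_T(T)$ comes from $\phi_T(T')$ by the type III step; both $h$ and $\hT$ respond by hanging a new leaf off the rightmost leaf, and since $\phi_T$ is label-compatible and identity-on-shape this transfers verbatim. Type IV, $T = \oplusT(T_1,i,T_2)$ with $\phi_T(T) = \oplusB(\phi_T(T_1),i,\phi_T(T_2))$, is handled by reducing to the type II computation: set $B_1 = \phi_T(T_1)$, $B_2 = \phi_T(T_2)$; by the type II case $\phi_T(\hT(\DeltaT(T_1,i))) = h(\DeltaB(B_1,i))$, and by induction $\phi_T(\hT(T_2)) = h(B_2)$; then I must check that the ``replace the rightmost leaf of $T_2' = \hT(T_2)$ by (a relabeled copy of) $T_1' = \hT(\DeltaT(T_1,i))$'' operation, including the shift by $\rpath(T_2')$ on the leaf labels of $T_1'$, is exactly what $\phi_T$ turns into ``replace the rightmost leaf of $h(B_2)$ by the root-labeled-$1$ copy of $h(\DeltaB(B_1,i))$''. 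The label shift by $\rpath(T_2')$ on the decorated-tree side is the mechanism that makes the grafted leaves' labels consistent with their new (deeper) depths, and under $\phi_T$ this corresponds to the fact that the $\beta$-(1,0) tree operation does not touch labels of the grafted subtree at all — the depth offset is exactly absorbed.

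The main obstacle I anticipate is the type IV bookkeeping: one has to be careful that the leaf-label shift by $\rpath(T_2')$ applied to $T_1'$ (all leaves except the rightmost) is precisely the correction needed so that (a) the resulting object is again a decorated tree — the first and second defining conditions must be rechecked at the splice point, using that the rightmost leaf of $T_2'$ sat at depth $\rpath(T_2')$ — and (b) that, after applying $\phi_T$, the induced node labels agree on the nose with the $\beta$-(1,0) tree obtained by $h$'s type IV rule, where the grafted copy of $h(\DeltaB(B_1,i))$ keeps all its labels and only its new root receives label $1$. Verifying this amounts to tracking how ``number of descendant leaves with label $\le p-2$'' behaves under the simultaneous depth increase and label shift along the spliced subtree; the point is that these two changes cancel, so every internal node of the grafted part keeps the same $\phi_T$-label, which is exactly what the $\beta$-(1,0) side predicts. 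Once this is pinned down, the remaining identities are routine, and the induction closes to give $\hT = \phi_T^{-1} \circ h \circ \phi_T$ on all of $\mathcal{T}$.
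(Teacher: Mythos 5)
Your proposal is correct and follows essentially the same route as the paper: induction on the size of the tree over the four types, proving $\phi_T \circ \hT = h \circ \phi_T$ via the explicit labeling $\varphi_T$, with the key point in type IV being that the shift of the leaf labels of $\hT(\DeltaT(T_1,i))$ by $\rpath(\hT(T_2))$ is exactly cancelled by the corresponding increase in depth, so every $\varphi_T$-label of the grafted part is unchanged — precisely the paper's argument. One small caution: you invoke Theorem~\ref{thm:h-involution} (that $\rootB(B)=\rpath(h(B))$) in the type II case; this is harmless as a previously known cited result, but since the paper later re-derives that theorem from this very proposition, you would want to replace it by a statement carried along in the induction (e.g.\ $\rpath(\hT(T))=\flT(T)$) to keep that re-derivation non-circular.
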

\begin{proof}
Since $\phi_T$ is a bijection, we will prove instead $h \circ \phi_T = \phi_T \circ \hT$ by induction on the size of $\beta$-(1,0) trees. In the following, we will use the description of $\phi_T$ by $\varphi_T$. The first three cases can be easily checked, and we concentrate on type IV case where $T = \DeltaT(T_1, i, T_2)$. In $\phi_T(\hT(T))$, the part coming from $T_2$ is identical to that in $h(\phi_T(T_2))$ by induction hypothesis. For the part coming from $T_1$, in both $\phi_T(\hT(T))$ and $h(\phi_T(T))$, we have added $\rpath(\hT(T_2))$ to each leaf (except the rightmost one, which is always free) while the depth of each node in this part also increases by $\rpath(\hT(T_2))$. We thus conclude the induction.
\end{proof}

\begin{figure}
\centering
\includegraphics[page=6, scale=0.8]{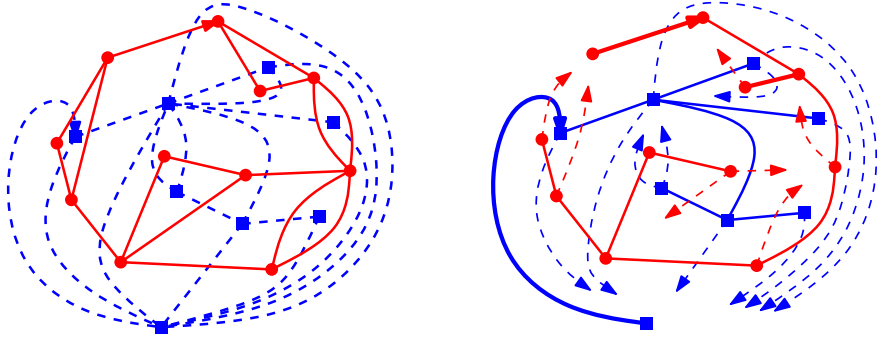}
\caption{The decorated tree of an NSP-map and its dual} \label{fig:deco-dual}
\end{figure}

We will now see that the duality on NSP-maps is exactly the involution $\hT$ on corresponding decorated trees. Figure~\ref{fig:deco-dual} gives an example of how map duality affects the decorated tree of an NSP-map.
\begin{thm} \label{thm:duality-equiv-hT}
  The decorated trees of an NSP-map and its dual are related by the involution $\hT$, or equivalently, we have $\dual = \mathrm{T}^{-1} \circ \hT \circ \mathrm{T}$.
\end{thm}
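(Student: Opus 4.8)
The plan is to prove $\dual = \mathrm{T}^{-1} \circ \hT \circ \mathrm{T}$ by induction on the size of an NSP-map $M$, following the parallel decomposition of $M$ into its four types and matching each case against the recursive definition of $\hT$ given above. Since $\mathrm{T}$ is a bijection, it suffices to show $\mathrm{T}(\dual(M)) = \hT(\mathrm{T}(M))$. The crucial structural fact I expect to need is that map duality interacts well with the parallel decomposition: contracting the root of $M$ (which underlies $\PiM$, $\hdM$, $\tlM$) should correspond on the dual side to deleting the dual root, and conversely. More precisely, I would first establish that \emph{the parallel decomposition of $M$ is the ``series-type'' decomposition of $\dual(M)$ and vice versa} — this is essentially the detour promised for Section~\ref{sec:series}, so I may either cite it or prove the needed special cases inline. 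The key quantitative input is that $\opname{deg}(M)$, the core-face structure, and the onion-layer components of $M'$ (obtained by contracting the root) are dual to the face-degree and sausage-components that appear when one \emph{deletes} the root of $\dual(M)$.

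First I would handle the base case: the Type I map (double edge) is self-dual and maps under $\mathrm{T}$ to the single-free-leaf tree, on which $\hT$ acts as the identity, so the identity holds trivially. Next, for the inductive step I would analyze how $\dual$ transforms the type of $M$. The expectation, guided by Figure~\ref{fig:deco-dual}, is that duality swaps the ``$\PiM$-layer'' operations with the ``loop/leaf'' operations in a way that exactly mirrors how $\hT$ reorganizes the tree: in the definition of $\hT$ on a Type II tree $T = \DeltaT(T',i)$, one takes $\hT(T')$ and grafts a free leaf onto the $i$-th node of the rightmost path — and $\rpath$ of the image tree equals $\flT(T')$ by the decorated-tree analogue of Theorem~\ref{thm:h-involution}. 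On the map side, $M = \DeltaM(M',i)$ means $M$ is obtained from $M'$ by splitting the root vertex so the new root vertex has degree $i$; dualizing, this should become an operation on $\dual(M')$ that adds structure governed by the face sitting across the dual root, of size matching $i$. I would verify that $\mathrm{T}$ transports the map-side splitting to precisely the tree-side ``attach a free leaf to the $i$-th rightmost-path node'' operation, using the explicit DFS description of $\mathrm{T}$ together with Theorem~\ref{thm:T-recursive-canonical} to control which leaves are free and where they lie in traversal/rightmost order.

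For Types III and IV I would proceed analogously. In Type III, $M$ is obtained from a smaller $M'$ by duplicating the root (creating a loop upon contraction), and $\dual$ of this operation should be the ``add a pendant edge on the core face'' operation that $\mathrm{T}$ sends to ``attach a free leaf to the rightmost leaf, turning it internal'' — exactly the Type III clause of $\hT$. In Type IV, $M = \oplusM(M_1,i,M_2)$, and duality should distribute over $\oplusM$ in a twisted way: $\dual(M)$ is built from $\dual(M_2)$ with a copy of $\dual(\DeltaM(M_1,i))$ grafted in, located along the structure dual to the core face of $M_2$ — which under $\mathrm{T}$ is the rightmost path of $\mathrm{T}(M_2)$. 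This matches the Type IV clause of $\hT$: splice $\hT(\DeltaM(M_1,i))$ (with leaves shifted by $\rpath$) onto the rightmost leaf of $\hT(\mathrm{T}(M_2))$. Here I would lean on Theorem~\ref{thm:T-recursive-canonical} and Proposition~\ref{prop:oplus-hd-tl-m} to identify $\hdM(\dual(M))$ and $\tlM(\dual(M))$, and on the degree/face-degree correspondence to pin down the integer parameter $i$ is preserved.

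The main obstacle, I expect, is the geometric verification in the inductive step that map duality really does convert the parallel decomposition into the operation on the dual that $\mathrm{T}$ reads off as a rightmost-path modification. This requires a careful planar/topological argument: contracting the root edge of $M$ and cutting at the resulting vertex yields onion layers separated by faces adjacent to both endpoints of the root; under duality these faces become vertices on a path, and the innermost-component/core-face dichotomy becomes exactly the loop-versus-nonloop dichotomy on the dual side — but matching this with the DFS-order in the definition of $\mathrm{T}$ (so that ``innermost'' corresponds to ``visited last'' corresponds to ``leftmost child / rightmost path'') demands attention to orientation conventions (clockwise vs.\ counter-clockwise, left of root vs.\ right of root). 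I would isolate this as a lemma about how $\dual$, $\PiM$, and the root-duplication operation interact, prove it once carefully using the explicit definition of $\dual$ and planarity, and then let the four inductive cases fall out by combining it with Theorem~\ref{thm:T-recursive-canonical} and the decorated-tree analogue of $\rootB = \rpath\circ h$.
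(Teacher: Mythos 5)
Your proposal follows essentially the same route as the paper: induction on the size of $M$ via the parallel decomposition, with the key geometric observation that the rightmost path of $\mathrm{T}(\dual(M))$ is made of dual edges of the edges at the root vertex, so that each of the four type clauses of $\hT$ is matched by tracking the DFS of $\dual(M)$. The only organizational difference is that you propose to factor the geometry through the parallel-versus-series duality (which the paper proves separately, and with an extra re-rooting $\NR$ twist you would need to account for), whereas the paper's proof verifies the needed DFS facts directly in each case; either way the argument goes through.
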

\begin{proof}
We only need to prove that $\mathrm{T} \circ \dual = \hT \circ \mathrm{T}$. We will proceed by induction on the size NSP-maps. Let $M$ be an NSP-map, $e$ its root and $v$ its root vertex. We suppose that any NSP-map $N$ with fewer edges than $M$ satisfies $\mathrm{T}(\dual(N)) = \hT(\mathrm{T}(N))$. By the construction of $\dual(M)$, the rightmost path of $\mathrm{T}(\dual(M))$ contains dual edges of edges adjacent to $v$ in $M$, except the edge that precedes $e$ on the outer face. With this observation, we analyze all four possible types of $M$.
\begin{itemize}
\item \textbf{Type I}: It is clear that $\mathrm{T}(\dual(M)) = \hT(\mathrm{T}(M))$ in this base case.
\item \textbf{Type II}: This case is illustrated by the left part of Figure~\ref{fig:deco-dual-case}. In this case, $M = \DeltaM(M',i)$ for some $M'$ and $i$. We can get $\dual(M)$ from $\dual(M')$ by adding the extra dual edge of the new edge in $M$, which is the root and is adjacent to the outer face, thus also a free leaf in $\mathrm{T}(\dual(M))$. This extra free leave does not alter the DFS process of $\dual(M)$, since it visits the root vertex of $\dual(M)$. This new free leaf is the rightmost leaf of $\mathrm{T}(\dual(M))$, and $\opname{deg}(M) = i$, implying that it must be attached to the $i$-th vertex on the rightmost path of $\mathrm{T}(\dual(M'))$, which agrees with the definition of $\hT$ in this case.
\item \textbf{Type III}: In this case, the core face of $M$ is adjacent to the root $e$ and another edge $e'$. By deleting $e'$, we have another NSP-map $M'$, and $\dual(M)$ can be obtained by inserting a vertex in the middle of the dual edge of $e$ in $\dual(M')$. Since $e$ corresponds to the rightmost leaf in $\mathrm{T}(\dual(M'))$, we can get $\mathrm{T}(\dual(M))$ by attaching a free leaf to the rightmost leaf of $\mathrm{T}(\dual(M'))$, which agrees with the definition of $\hT$ in this case.
\item \textbf{Type IV}: This case is illustrated by the right part of Figure~\ref{fig:deco-dual-case}. In this case, $M = \oplusM(M_1, i, M_2)$ for some $M_1, M_2$ and $i$. We can see $M$ as $M_2$ containing $\DeltaM(M_1,i)$ in the core face with the roots identified. Let $v$ be the root vertex of $\dual(M)$ and $g$ the dual vertex of the face $f$ that separates $\DeltaM(M_1,i)$ from $M_2$ in $M$. In the DFS process of $M_2$, the dual vertex $g$ is on the rightmost path, and the rightmost child of $g$ is a free leaf corresponding to the root of $M_2$. We consider the DFS process of $M$, which is identical to that of $M_2$ until we reach $g$. The first descendant (the rightmost child in the DFS tree) of $g$ is now a dual vertex $h$ in the part of $\DeltaM(M_1,i)$. Since the dual of the part $\DeltaM(M_1,i)$ is only connected to $g$ and the dual root vertex, $g$ plays the role of the dual vertex of the outer face of $\DeltaM(M_1,i)$, and the exploration of that part results in a direct sub-tree that is a modified version of $\mathrm{T}(\dual(\DeltaM(M_1,i)))$ where all the leaf labels are incremented by the depth of $g$, except the rightmost free leaf corresponding to the only edge connected to the dual root vertex. Then the exploration continues unalteredly on other parts of $M_2$. We thus construct $\mathrm{T}(\dual(M))$ from $\mathrm{T}(\dual(M_2))$ and $\mathrm{T}(\dual(\DeltaM(M_1,i)))$, and it agrees with the definition of $\hT$ in this case.
\end{itemize}
By structural induction, we have $\mathrm{T} \circ \dual = \hT \circ \mathrm{T}$.
\end{proof}

\begin{figure}
\centering
\includegraphics[page=17, scale=0.9]{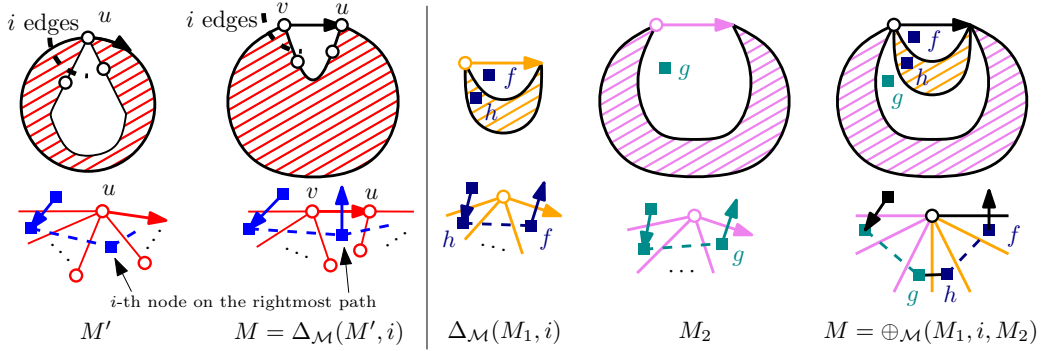}
\caption{Decorated trees of the dual in recursive decomposition of NSP-maps} \label{fig:deco-dual-case}
\end{figure}

According to Proposition~\ref{prop:hT-equiv-h}, the function $\hT$ is the equivalent of $h$ in decorated trees. Therefore, Theorem~\ref{thm:h-involution} and Theorem~\ref{thm:h-autodual} are now natural consequences of Theorem~\ref{thm:duality-equiv-hT}.

\begin{proof}[Proof of Theorem~\ref{thm:h-involution} and Theorem~\ref{thm:h-autodual}]
By Theorem~\ref{thm:duality-equiv-hT} and Proposition~\ref{prop:hT-equiv-h}, we have $\dual = \mathrm{T}^{-1} \circ \phi_T^{-1} \circ h \circ \phi_T \circ \mathrm{T}$. Since $\dual$ is an involution, $h$ is also an involution. For any $\beta$-(1,0) tree $B$, let $M = \mathrm{T}^{-1}(\phi_T^{-1}(B))$, we have $\rpath(h(B)) = \opname{deg}(M) = \flT(\mathrm{T}(M)) = \rootB(B)$. Since the canonical bijections $\phi_T$ and $\mathrm{T}$ conserve the size, the number of fixed points of $h$ in $\mathcal{B}_n$ is equal to the number of fixed points of $\dual$ in $\mathcal{M}_n$, that is, the number of self-dual NSP-maps with $n+1$ edges.
\end{proof}

We thus have a natural bijective proof of Theorem~\ref{thm:h-autodual} as asked in \dinvref{}.

We now deal with the case in the realm of synchronized intervals. We start by a well-known involution on Dyck path that interacts well with the Tamari lattice. Given a non-empty Dyck path $D$, it can be uniquely decomposed into $D = D_1 u D_2 d$, with both $D_1, D_2$ smaller (or empty) Dyck paths, by taking the last non-initial contact as the starting point of $D_2$. We define a transformation $\mir : \mathcal{D} \to \mathcal{D}$ recursively based on this decomposition:
\begin{itemize}
\item If $D$ is empty, then $\mir(D)$ is also empty;
\item Otherwise, suppose that $D = D_1 u D_2 d$, we define $\mir(D) = \mir(D_2) u \mir(D_1) d$.
\end{itemize}
We see that $\mir(D)$ has the same size as $D$. For readers familiar with the Tamari lattice, binary trees can also be used to define the Tamari lattice, with tree rotation that gives the order. The involution $\mir$ on Dyck paths corresponds in fact to taking the reflection by a vertical axis of binary trees, under a certain bijection between Dyck paths and binary trees.

The following properties of $\mir$ is well-known and can be proved inductively using the decomposition $D=D_1 u D_2 d$.

\begin{prop}
  The transformation $\mir$ is an involution on $\mathcal{D}_n$ for all $n$. Furthermore, for two Dyck paths $D_1 \preceq D_2$ in the Tamari lattice, we have $\mir(D_1) \succeq \mir(D_2)$.
\end{prop}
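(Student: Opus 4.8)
The plan is to prove both assertions by induction on the size $n$, using the recursive decomposition $D = D_1 u D_2 d$ in which $D_1$ and $D_2$ are strictly smaller (or empty) Dyck paths. The base case $D = \epsilon$ (or $D = ud$) is immediate from the definition. For the inductive step of the involution claim, I would write $D = D_1 u D_2 d$ and compute $\mir(\mir(D)) = \mir(\mir(D_2) u \mir(D_1) d)$; the key point is that the decomposition of $\mir(D) = \mir(D_2) u \mir(D_1) d$ as $E_1 u E_2 d$ must have $E_1 = \mir(D_2)$ and $E_2 = \mir(D_1)$. This requires checking that the splitting is canonical, i.e. that the last non-initial contact of $\mir(D_2) u \mir(D_1) d$ is exactly the point separating $\mir(D_2)$ from $u\mir(D_1)d$ — equivalently that $u\mir(D_1)d$ is a primitive (irreducible) Dyck path, which holds because $u X d$ is always primitive for any Dyck path $X$. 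Then $\mir(\mir(D)) = \mir(\mir(D_1)) u \mir(\mir(D_2)) d = D_1 u D_2 d = D$ by the induction hypothesis applied to $D_1$ and $D_2$.

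For the order-reversing claim, I would use the covering relation: $D \gtrdot D'$ (i.e. $D$ covers $D'$) when $D = V d D_0 W$ and $D' = V D_0 d W$ for a nonempty Dyck factor $D_0$. Since $\preceq$ is the transitive closure of covers, it suffices to show that $D \gtrdot D'$ implies $\mir(D') \gtrdot \mir(D)$ (or at least $\mir(D') \succeq \mir(D)$, allowing the cover to collapse, though in fact it should stay a single cover). I would locate where the contracted factor $D_0$ and the down step $d$ sit relative to the top-level decomposition $D = D_1 u D_2 d$, splitting into cases: the rearranged portion lies entirely in $D_1$, entirely in $D_2$, or involves the final $u D_2 d$ block. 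In each case, unfolding one level of the definition of $\mir$ and applying the induction hypothesis to the smaller pieces $D_1$, $D_2$ should produce a cover $\mir(D') \gtrdot \mir(D)$ in the mirrored path.

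The main obstacle I expect is the bookkeeping in the order-reversing step: correctly identifying, after applying $\mir$, which down step becomes the pivot of the covering move, and handling the case where the covering move in $D$ straddles the split point of the top-level decomposition (so that $D_1$ or $D_2$ changes at the outermost level rather than recursively). It may be cleaner to invoke the classical fact that $\mir$ corresponds to the left-right reflection of binary trees and that a Tamari cover is a single right rotation, whose mirror is a single left rotation; reflection sends left rotations to right rotations, which is exactly order reversal. Either way, the inductive argument is routine once the decomposition is shown to be canonical, so I would keep the write-up brief, as the paper already flags this proposition as well-known.
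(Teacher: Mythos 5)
Your proposal is correct and follows essentially the route the paper itself indicates: the paper gives no written proof, merely remarking that both claims follow by induction on the decomposition $D=D_1uD_2d$ (and that $\mir$ is the Dyck-path avatar of left--right reflection of binary trees). Your treatment of the involution via primitivity of $uXd$ is exactly the intended argument, and your fallback to the binary-tree reflection/rotation picture is the standard clean way to dispatch the order-reversal case analysis you rightly flag as the only delicate point.
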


The involution $\mir$ can thus be regarded as an isomorphism between the Tamari lattice $(\mathcal{D}_n, \preceq)$ and its dual $(\mathcal{D}_n, \succeq)$. Furthermore, it also induces an involution on the set of Tamari intervals. By abuse of notation, we denote by $\mir([P,Q])$ the interval $[\mir(Q), \mir(P)]$. The following property is harder to see.

\begin{prop}[Corollary of Proposition~11 and Theorem~2 in \tamref{}]
Let $P,Q$ be two Dyck paths in $\mathcal{D}_n$ of the same type. Then $\mir(P)$ and $\mir(Q)$ are also of the same type. As a consequence, for a synchronized interval $I$, its image $\mir(I)$ is also a synchronized interval.
\end{prop}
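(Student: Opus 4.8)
The plan is to leverage the machinery already built in this paper rather than to attack the statement about types directly via the combinatorics of Dyck paths. The key observation is that, by Theorem~\ref{thm:duality-equiv-hT} together with Proposition~\ref{prop:hT-equiv-h} and Theorem~\ref{I-recursive-canonical}, we have a chain of canonical bijections
\[
  \mathcal{I} \xrightarrow{\ \mathrm{I}^{-1}\ } \mathcal{T} \xrightarrow{\ \mathrm{T}^{-1}\ } \mathcal{M}
\]
under which the recursive decomposition of synchronized intervals corresponds to the parallel decomposition of NSP-maps, and under which $\hT$ corresponds to map duality $\dual$. So the first thing I would do is verify that the involution $\mir$ on synchronized intervals coincides, under these bijections, with $\hT$ — equivalently, with $\dual$ on NSP-maps. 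This is most naturally checked by induction on the recursive decomposition: one shows $\mir = \phi_I^{-1} \circ h \circ \phi_I$ by comparing the action of $\mir$ on the decomposition $P = uP_1^\ell d P_1^r P_2$, $Q = uQ_1 d Q_2$ of Proposition~\ref{prop:int-decomp} with the four-case definition of $h$ (or $\hT$), using the recursion $\mir(D) = \mir(D_2) u \mir(D_1) d$. Here the free-leaf/contact correspondence of Lemma~\ref{lem:free-leaf-contact} and the identity $\rpath(h(B)) = \rootB(B)$ from Theorem~\ref{thm:h-involution} (which matches $\contI$ on the interval side) are exactly what make the bookkeeping work.

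Once $\mir$ is identified with $\dual$ through the canonical bijections, the statement becomes almost immediate: the bijection $\mathrm{I}^{-1} \circ \mathrm{T}^{-1}$ sends $\mir(I)$ to $\dual(M)$ for the corresponding NSP-map $M$, and since both $\mathrm{I}$ and $\mathrm{T}$ preserve size (Proposition~\ref{prop:T-bij} and the size-preservation of $\phi_I$, $\phi_T$), and since $\dual(M)$ is again an NSP-map by Proposition~\ref{prop:non-sep-self-dual}, the preimage $\mir(I)$ must again be a genuine synchronized interval in $\mathcal{I}_n$. In particular its two bounding paths are automatically of the same type, because membership in $\mathcal{I}$ \emph{means} being a synchronized interval. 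So I would phrase the argument as: $\mir(I) = (\mathrm{I} \circ \mathrm{T} \circ \dual \circ \mathrm{T}^{-1} \circ \mathrm{I}^{-1})(I) \in \mathcal{I}_n$, hence $\mir(P)$ and $\mir(Q)$ have the same type. The ``$\mir(P)$ and $\mir(Q)$ of the same type'' clause is then a corollary rather than something proved separately.

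The main obstacle is the inductive identification $\mir = \phi_I^{-1} \circ h \circ \phi_I$ of the first step; everything after that is formal. The delicate point in that induction is the type~IV case, where $h$ replaces the rightmost leaf of $h(B_2)$ by $h(\DeltaB(B_1,i))$ with a shift on the rightmost path, and one must check that under $\mir$ the decomposition of the lower path $P = uP_1^\ell d P_1^r P_2$ and the upper path $Q = uQ_1 d Q_2$ get reorganized so that the $P_2$-part (resp.\ $Q_2$-part) and the pointed $[P_1^\ell P_1^r, Q_1]$-part land in exactly the configuration that $\oplusI$ and $\DeltaI$ produce after applying $\mir$ componentwise. This requires unwinding how $\mir(D) = \mir(D_2) u \mir(D_1) d$ interacts with the ``last non-initial contact'' splitting versus the ``first up step'' splitting used in Proposition~\ref{prop:int-decomp}, and tracking how the distinguished contact in the pointed sub-interval maps to a position on the rightmost path — this is precisely the content of $\rpath(h(B)) = \rootB(B)$ read on the interval side. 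I expect this to be a somewhat involved but ultimately routine verification, parallel in spirit to the proofs of Theorems~\ref{thm:T-recursive-canonical}, \ref{I-recursive-canonical} and \ref{thm:duality-equiv-hT}.
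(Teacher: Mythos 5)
The paper does not actually prove this proposition: it imports it as a corollary of Proposition~11 and Theorem~2 of \tamref{}, so your self-contained derivation is a genuinely different route, and for the second clause it can be made to work. The key point you must make explicit is the order of logic: the identity $\mir\circ\mathrm{I}=\mathrm{I}\circ\hT$ (the paper's Theorem~\ref{thm:mir-equiv-hT}, stated \emph{after} this proposition) has to be proved first as an identity of \emph{pairs of Dyck paths}, without presupposing that $\mir(I)$ is synchronized; this is possible because the computation of $\mir(\mathrm{I}(T))=[\mir(Q),\mir(P)]$ via Lemma~\ref{lem:mir-comp} never uses synchronizedness of the result. Once that is done, $\mir(I)=\mathrm{I}(\hT(\mathrm{I}^{-1}(I)))$ lies in $\mathcal{I}_n$ because $\hT$ maps decorated trees to decorated trees and $\mathrm{I}$ maps $\mathcal{T}_n$ onto $\mathcal{I}_n$, as you say. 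The detour through NSP-maps, $\dual$ and Theorem~\ref{thm:duality-equiv-hT} is unnecessary for this deduction ($\hT$ and $\mathrm{I}$ suffice), and the price of the whole approach is that you front-load the hardest induction of Section~\ref{sec:inv} to justify a statement the paper treats as an imported prerequisite.

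The genuine gap is the first clause. The statement concerns \emph{arbitrary} $P,Q\in\mathcal{D}_n$ of the same type, with no comparability assumed, whereas your argument yields ``$\mir(P)$ and $\mir(Q)$ have the same type'' only when $P\preceq Q$, i.e.\ when $[P,Q]$ is an interval. To upgrade this to the general statement you would need, in addition, that the Dyck paths of a fixed type form an interval of the Tamari lattice (so that any two of them are comparable to a common extremal element) --- itself a nontrivial input from \tamref{}. The first clause is better handled directly: one checks by induction on the decomposition $D=D_1uD_2d$, using $\mir(D)=\mir(D_2)u\mir(D_1)d$, that $\opname{Type}(\mir(D))$ is a function of $\opname{Type}(D)$ alone (it is its reverse complement). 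That elementary argument proves the full proposition in a few lines and keeps it independent of the machinery you invoke; your route, as written, proves only the consequence and a restricted form of the main clause.
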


Interestingly, the involution $\mir$ that we present here, when restricted to synchronized intervals, is also related to the mysterious involution $h$ of $\beta$-(1,0) trees. In fact, $\mir$ is transferred to $h$ under the conjugation of canonical bijections. Since we have a direct bijection $\mathrm{I}$ from synchronized intervals to decorated trees, we will use decorated trees to relay this equivalence. We start by a lemma on computing the involution $\mir$ on Dyck paths.

\begin{lem} \label{lem:mir-comp}
  Given a Dyck path $P = P_1 u P_2 d u P_3 d \cdots u P_k d$ with $P_1, P_2, \ldots, P_k$ all Dyck paths, we have $\mir(P) = \mir(P_k) u \mir(P_{k-1}) u \cdots u \mir(P_1) d^{k-1}$.
\end{lem}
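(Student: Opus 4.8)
The plan is to prove the formula by induction on $k$, using the recursive definition $\mir(D) = \mir(D_2)\, u\, \mir(D_1)\, d$ based on the canonical decomposition $D = D_1 u D_2 d$ obtained by cutting at the last non-initial contact. The crucial observation is how the stated decomposition $P = P_1 u P_2 d u P_3 d \cdots u P_k d$ interacts with this canonical decomposition: when $k \geq 2$, the last non-initial contact of $P$ is precisely the contact just before the final factor $u P_k d$, so the canonical decomposition reads $P = D_1 u D_2 d$ with $D_1 = P_1 u P_2 d u P_3 d \cdots u P_{k-1} d$ and $D_2 = P_k$. (One must check $P_k$ is indeed the tail Dyck path after the last contact: the point before $u P_k d$ is a contact since $P_1 u P_2 d \cdots u P_{k-1} d$ is a concatenation of a Dyck path with several balanced blocks, hence a Dyck path, and no interior point of $u P_k d$ touches the axis except possibly inside $P_k$, but those are not contacts of $P$ at axis level unless $P_k$ itself returns — handled by associativity of the same decomposition applied to $P_k$ if needed, though here $P_k$ is just carried along as a single Dyck path.)

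\textbf{Key steps.} First I would treat the base case $k = 1$: then $P = P_1$ is itself a Dyck path and the claimed formula reads $\mir(P) = \mir(P_1) d^{0} = \mir(P_1) = \mir(P)$, which is trivially true. For the inductive step with $k \geq 2$, I apply the recursive definition of $\mir$ to the canonical decomposition $P = D_1 u P_k d$ identified above, obtaining $\mir(P) = \mir(P_k)\, u\, \mir(D_1)\, d$. Now $D_1 = P_1 u P_2 d u P_3 d \cdots u P_{k-1} d$ has exactly the same shape as the hypothesis of the lemma but with $k-1$ blocks, so the induction hypothesis gives $\mir(D_1) = \mir(P_{k-1})\, u\, \mir(P_{k-2})\, u \cdots u\, \mir(P_1)\, d^{k-2}$. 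Substituting, $\mir(P) = \mir(P_k)\, u\, \big(\mir(P_{k-1})\, u \cdots u\, \mir(P_1)\, d^{k-2}\big)\, d = \mir(P_k)\, u\, \mir(P_{k-1})\, u \cdots u\, \mir(P_1)\, d^{k-1}$, which is exactly the desired identity.

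\textbf{Main obstacle.} The only delicate point is verifying that the canonical decomposition of $P$ (cutting at the last non-initial contact) aligns with splitting off the last block $u P_k d$; that is, that the last non-initial contact of $P_1 u P_2 d \cdots u P_k d$ is the contact immediately preceding the final $u$. This follows because each prefix $P_1 u P_2 d \cdots u P_j d$ is a Dyck path (concatenation of the Dyck path $P_1$ with balanced blocks $u P_i d$), so the point after $P_1 u P_2 d \cdots u P_{k-1} d$ is a genuine contact, and the suffix $u P_k d$ contains no lattice point on the $x$-axis other than its endpoints — so there is no contact strictly between that point and the end of $P$. One should also note the decomposition in the statement is not required to be unique (the $P_i$ need not be prime), which is fine: the lemma asserts the identity for \emph{any} such decomposition, and the induction respects that since $D_1$ inherits a decomposition of the same form. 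Once this alignment is established, everything else is a routine unwinding of definitions.
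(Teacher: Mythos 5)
Your proof is correct and follows essentially the same route as the paper: induction on $k$, applying the defining relation $\mir(D_1 u D_2 d) = \mir(D_2)\,u\,\mir(D_1)\,d$ with $D_1 = P_1 u P_2 d \cdots u P_{k-1} d$ and $D_2 = P_k$. Your extra verification that this split is indeed the canonical last-contact decomposition (because the interior of $u P_k d$ stays strictly above the axis) is a detail the paper leaves implicit, and your choice of $k=1$ rather than $k=2$ as the base case is immaterial.
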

\begin{proof}
  We prove by induction on $k$. The case $k=2$ comes from the definition of $\mir$. The induction step is easily checked by substituting $D_1 = P_1 u P_2 d \cdots u P_{k-1} d$ and $D_2 = P_k$ into $\mir(D_1 u D_2 d) = \mir(D_2) u \mir(D_1) d$ and apply the induction hypothesis.
\end{proof}

\begin{coro} \label{coro:mir-comp}
  If a Dyck path $P$ has $k$ non-initial contacts, then $\mir(P)$ ends with $k$ consecutive down steps.
\end{coro}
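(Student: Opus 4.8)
The plan is to obtain this immediately from Lemma~\ref{lem:mir-comp}. First I would note that a nonempty Dyck path $P$ with exactly $k$ non-initial contacts is delimited by its $k+1$ contacts (the initial point together with the $k$ non-initial ones) into $k$ consecutive portions, each of which meets the $x$-axis only at its two endpoints and is therefore a primitive arch $u P_i d$ with $P_i$ a (possibly empty) Dyck path, for $i = 1, \dots, k$. Thus
\[
P = (u P_1 d)(u P_2 d)\cdots(u P_k d).
\]

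Next I would massage this into the exact hypothesis of Lemma~\ref{lem:mir-comp} by prepending an empty Dyck path $P_0 = \epsilon$, so that $P = P_0\, u P_1 d\, u P_2 d \cdots u P_k d$ is a decomposition of the form required by the lemma, but with $k+1$ pieces $P_0, P_1, \dots, P_k$. Applying Lemma~\ref{lem:mir-comp} then yields
\[
\mir(P) = \mir(P_k)\, u\, \mir(P_{k-1})\, u \cdots u\, \mir(P_1)\, u\, \mir(P_0)\, d^{k},
\]
and since $\mir(P_0) = \mir(\epsilon) = \epsilon$, the word $\mir(P)$ ends in $u\, d^{k}$; in particular its last $k$ steps are all down steps, which is the claim.

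I expect no genuine obstacle here: the only points requiring care are the degenerate cases — for instance $P = ud$ with $k = 1$, or arches $P_i$ that are themselves empty — which are absorbed uniformly once empty Dyck paths are permitted in the factorization, together with the harmless reindexing needed to match the arch decomposition of $P$ with the precise shape of the hypothesis in Lemma~\ref{lem:mir-comp}.
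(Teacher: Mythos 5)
Your proof is correct and follows exactly the paper's route: decompose $P$ into its $k$ arches determined by the non-initial contacts, prepend an empty first part to match the hypothesis of Lemma~\ref{lem:mir-comp}, and read off the trailing $d^k$. The paper's own proof is a two-line version of the same argument.
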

\begin{proof}
  Since $P$ has $k$ non-initial contacts, it can be uniquely written as $P = u P_1 d u P_2 d \cdots u P_k d$ with every $P_i$ a Dyck path. We then conclude by applying Lemma~\ref{lem:mir-comp} with the first part empty.
\end{proof}

\begin{thm} \label{thm:mir-equiv-hT}
  Given a synchronized interval $I$, the decorated trees associated to $I$ and $\mir(I)$ are related by the involution $\hT$. Or equivalently, we have $\mir = \mathrm{I} \circ \hT \circ \mathrm{I}^{-1}$.
\end{thm}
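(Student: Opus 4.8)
The plan is to prove the equivalent identity $\mir\circ\mathrm{I}=\mathrm{I}\circ\hT$ by structural induction on a decorated tree $T$. Writing $\mathrm{I}(T)=[P,Q]$ we have $\mir(\mathrm{I}(T))=[\mir(Q),\mir(P)]$ by definition, so the task is to show that the lower and upper paths of $\mathrm{I}(\hT(T))$ are $\mir(Q)$ and $\mir(P)$ respectively. The base case is type~I, where $\mathrm{I}(T)=[ud,ud]$, $\hT(T)=T$, and $\mir(ud)=ud$. (Equivalently, using the already established $\phi_T=\phi_I\circ\mathrm{I}$ and $\hT=\phi_T^{-1}\circ h\circ\phi_T$, the theorem reduces to $\phi_I\circ\mir=h\circ\phi_I$, which is the same computation organised around synchronized intervals instead of decorated trees.)

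For the inductive step I would treat the four tree types in turn, in each case: (i) use Theorem~\ref{I-recursive-canonical} to write $P$ and $Q$ in the decomposed form of Proposition~\ref{prop:int-decomp} in terms of the synchronized intervals of the strictly smaller sub-trees; (ii) use the recursive definition of $\hT$ to describe $\hT(T)$ and read the two Dyck paths of $\mathrm{I}(\hT(T))$ off the counter-clockwise traversal defining $\mathrm{I}$, invoking the induction hypothesis for the sub-trees; (iii) check the two descriptions agree. The key computational device is Lemma~\ref{lem:mir-comp} with Corollary~\ref{coro:mir-comp}: the interval decomposition attaches pieces at the \emph{left} of a Dyck path, while $\hT$ attaches pieces along the \emph{rightmost path} of a tree, i.e. near the \emph{end} of the associated word, and Lemma~\ref{lem:mir-comp} says precisely that $\mir$ interchanges these two conventions --- it produces exactly the splicings of factors $ud$ into the terminal runs of down-steps of $\mir(P')$ and $\mir(Q')$ that match what $\hT$ records. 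For type~II, $T=\DeltaT(T',i)$ gives $P=uP^\ell dP^r$, $Q=uQ'd$ with $[P^\ell P^r,Q']=\mathrm{I}(T')$ split at the $i$-th contact from the right, and Lemma~\ref{lem:mir-comp} yields $\mir(uQ'd)=\mir(Q')\,ud$ together with a formula for $\mir(uP^\ell dP^r)$; on the tree side, attaching a free leaf as the rightmost child of the $i$-th node of the rightmost path of $\hT(T')$ appends $ud$ to the lower word and splices $ud$ into the terminal down-run of the upper word at the spot dictated by $i$ (here one also checks, via the certificate-counting function, that this new leaf has charge $0$ and disturbs no other charge), and the two descriptions match by the induction hypothesis $\mathrm{I}(\hT(T'))=[\mir(Q'),\mir(P')]$. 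Type~III (attaching a free leaf to the rightmost leaf) is entirely analogous.

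The main obstacle is type~IV, $T=\oplusT(T_1,i,T_2)$, where $\hT$ grafts $\hT(\DeltaT(T_1,i))$ --- with every leaf label but the rightmost raised by $\rpath(\hT(T_2))$ --- in place of the rightmost leaf of $\hT(T_2)$. For the upper paths this is clean: with $P=uP_1^\ell dP_1^r P_2$ and $Q=uQ_1 dQ_2$, Lemma~\ref{lem:mir-comp} applied to $uP_1^\ell dP_1^r P_2$ and to its sub-paths $P_2$ and $uP_1^\ell dP_1^r$ shows that $\mir(uP_1^\ell dP_1^r P_2)$ is $\mir(P_2)$ with $\mir(uP_1^\ell dP_1^r)$ spliced in immediately before its terminal down-run, and this is exactly the upper word of the traversal of the grafted tree once the induction hypothesis is applied to $T_2$ and to $\DeltaT(T_1,i)$; one uses here $\rpath(\hT(T_2))=\flT(T_2)=\contI(\mathrm{I}(T_2))$, which by Corollary~\ref{coro:mir-comp} is the length of the terminal down-run of $\mir(P_2)$. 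The delicate part is the lower path: one must verify that the label shift by $\rpath(\hT(T_2))$ built into $\hT$ is precisely what keeps the certificate-counting function $c$ correct --- certificates of internal nodes inside the grafted copy are unchanged by the shift, while each internal node of $\hT(T_2)$ formerly certified by its (free) rightmost leaf is now certified by the rightmost leaf of the grafted copy. Granting this, $\mathrm{P}(\hT(T))$ is $\mathrm{P}(\hT(T_2))$ with the block $d^{c(\ell)+1}$ coming from the old rightmost leaf $\ell$ replaced by the block coming from the grafted copy, and Lemma~\ref{lem:mir-comp} again identifies the result with $\mir(uQ_1 dQ_2)$, closing the induction. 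Throughout, the identity $\rpath(\hT(T))=\flT(T)$ --- the decorated-tree version of Theorem~\ref{thm:h-involution}, re-proved by the same induction --- is the running consistency check against Corollary~\ref{coro:mir-comp}.
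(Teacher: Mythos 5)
Your proposal is correct and follows essentially the same route as the paper: induction over the four tree types, with Lemma~\ref{lem:mir-comp} and Corollary~\ref{coro:mir-comp} computing the effect of $\mir$ on the contact decomposition, and the certificate-counting function tracked to recover the lower path — including the key observations that the new free leaf is never a certificate and that nodes formerly certified by the replaced rightmost leaf transfer their certificate to the grafted copy. The only slight understatement is calling type~III ``entirely analogous'' to type~II, since it is the first place the certificate reassignment occurs, but you handle exactly that mechanism in type~IV, so nothing is missing.
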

\begin{proof}
  We prove instead that $\mir \circ \mathrm{I} = \mathrm{I} \circ \hT$. We use induction on the size of decorated trees. Let $T$ be a decorated tree, and we suppose that every decorated tree with fewer nodes than $T$ already satisfies $\mir(\mathrm{I}(T)) = \mathrm{I}(\hT(T))$. We define $T^\dagger = \hT(T)$ and $[P^\dagger, Q^\dagger] = \mathrm{I}(T^\dagger)$. We now consider the four types of $T$.

  \begin{itemize}
  \item \textbf{Type I}: This is the base case, which is easily checked.
  \item \textbf{Type II}: We have $T = \DeltaT(T',i)$ for some smaller $T'$ and integer $i$. Suppose that $\mathrm{I}(T')=[P', Q']$, and we can write $P' = P'_1 u P'_2 d \cdots u P'_i d$ with every $P'_i$ a Dyck path. By the induction hypothesis and Lemma~\ref{lem:mir-comp}, we have
    \[ \mathrm{I}(\hT(T')) = \mir(\mathrm{I}(T')) = [\mir(Q'), \mir(P'_i) u \mir(P_{i-1}) \cdots u \mir(P_1) d^{i-1}].\]
    We also have $\mathrm{I}(T) = [ u P'_1 d u P'_2 d \cdots u P'_i d, u Q' d]$, and by Lemma~\ref{lem:mir-comp},
    \[ \mir(\mathrm{I}(T)) = [\mir(Q')ud, \mir(P'_i) u \mir(P_{i-1}) \cdots u \mir(P_1)u d^i].\]

    Now we compute $\mathrm{I}(\hT(T))$. We know that the tree structure of $T^\dagger$ comes from that of $\hT(T)$ by adding a leaf to the right of the $i^{\rm th}$ node on the right-most path of $\hT(T')$. Therefore, $Q^\dagger = \mir(P'_i) u \mir(P_{i-1}) \cdots u \mir(P_1) u d^i$. For $P^\dagger$, we observe that since $\hT(T')$ is a decorated tree, the new leaf added to get $\hT(T)$ is never a certificate, and the certificate-counting function of $\hT(T)$ has the same value on leaves already in $\hT(T')$. Therefore, $P^\dagger = \mir(Q') ud$, where the appended $ud$ is for the added leaf. We thus have $[P^\dagger, Q^\dagger] = \mir(\mathrm{I}(T))$.
  \item \textbf{Type III}: Let $\ell$ be the left-most child of the root of $T$, and $T_1$ the tree obtained by removing $\ell$ from $T$. Suppose that $\mathrm{I}(T_1)=[P_1,Q_1]$. Since $\ell$ cannot be the certificate of any non-root internal node, we have $\mathrm{I}(T) = [udP_1, udQ_1]$. Suppose that $P_1$ has $k$ non-initial contacts and $Q_1$ has $m \leq k$ non-initial contacts, then by Corollary~\ref{coro:mir-comp}, we have $\mir(P_1) = P^\dagger_{1,-} d^k$ and $\mir(Q_1) = Q^\dagger_{1,-} d^m$ for some paths $P^\dagger_{1,-}$ and $Q^\dagger_{1,-}$. By induction hypothesis, we have $\mathrm{I}(\hT(T_1)) = \mir(\mathrm{I}(T_1)) = [ Q^\dagger_{1,-} d^{m}, P^\dagger_{1,-} d^{k}]$. Using Lemma~\ref{lem:mir-comp} with the decomposition in the proof of Corollary~\ref{coro:mir-comp}, we also have $\mir(\mathrm{I}(T)) = [ Q^\dagger_{1,-} u d^{m+1}, P^\dagger_{1,-} u d^{k+1} ]$.
    
    Now for $\mathrm{I}(\hT(T))$, let $\ell'$ be the last leaf of $\hT(T_1)$. We observe that the tree structure of $T^\dagger$ is that of $\hT(T_1)$ with a leaf $\ell''$ added to $\ell'$. Therefore, we have $Q^\dagger = P^\dagger_{1,-} u d^{k+1}$. For $P^\dagger$, since internal nodes in $\hT(T_1)$ with $\ell'$ as certificate will have $\ell''$ as certificate in $T^\dagger$, and the certificates of other non-root internal nodes remain the same. Therefore, we easily check that $P^\dagger = Q^\dagger_{1,-} u d^{m+1}$, and we conclude that $[P^\dagger, Q^\dagger] = \mir(\mathrm{I}(T))$ in this case.
  \item \textbf{Type IV}: We have $T = \oplusT(T_1,i,T_2)$ in this case. Suppose that $\mathrm{I}(T) = [P,Q]$, $\mathrm{I}(T_1) = [P_1, Q_1]$ and $\mathrm{I}(T_2)=[P_2,Q_2]$. As in the type II case, we can write $P_1 = P_{1,1} u P_{1,2} d \cdots u P_{1,i} d$. It is clear that $Q = u Q_1 d Q_2$. For $P$, we observe that the certificate of the left-most child of the root, which was the original root of $T_1$, is the $i^{\rm th}$ free leaf from right to left that comes from $T_1$, while the certificates of other already existing internal nodes remain the same after the fusion. Therefore, we have $P = u P_{1,1} d \cdots u P_{1,i} d P_2$. We suppose that $P_2$ and $Q_2$ have $k$ and $m$ non-initial contacts respectively. By Corollary~\ref{lem:mir-comp}, we have $\mir(P_2) = P^\dagger_{2,-} d^k$ and $\mir(Q_2) = Q^\dagger_{2,-} d^m$ for some paths $P^\dagger_{2,-}, Q^\dagger_{2,-}$. By induction hypothesis, using Lemma~\ref{lem:mir-comp} and Corollary~\ref{lem:mir-comp}, we have
    \begin{align*}
      \mathrm{I}(\hT(T_1)) = \mir(\mathrm{I}(T_1)) &= [\mir(Q_1), \mir(P_{1,i}) u \mir(P_{1,i-1}) \cdots u \mir(P_{1,1}) d^{i-1}], \\
      \mathrm{I}(\hT(T_2)) = \mir(\mathrm{I}(T_2)) &= [Q^\dagger_{2,-} d^m, P^\dagger_{2,-} d^k].
    \end{align*}
    Similarly, using Lemma~\ref{lem:mir-comp} with the decomposition in Corollary~\ref{lem:mir-comp} for the parts of $P_2$ and $Q_2$, we have
    \[ \mir(\mathrm{I}(T)) = [ Q^\dagger_{2,-} \mir(Q_1) u d^{m+1}, P^\dagger_{2,-} \mir(P_{1,i}) u \mir(P_{1,i-1}) \cdots u \mir(P_{1,1}) u d^{i+k}]. \]

    We now consider $\mathrm{I}(\hT(T))$. Let $r$ be the length of the right-most path of $\hT(T_2)$. By the definition of $\hT$, the decorated tree $T^\dagger$ is constructed by first adding $r$ to the labels of $\hT(T_1)$, then adding a free leaf $\ell$ to the $i^{\rm th}$ node on the right-most path, and finally attaching the tree to the right-most leaf $\ell'$ of $\hT(T_2)$. Therefore, for the path $Q^\dagger$, we have $Q^\dagger = P^\dagger_{2,-} \mir(P_1,i) u \mir(P_{1,i-1}) \cdots u \mir(P_{1,1}) u d^{i+k}$. For the path $P^\dagger$, we now consider certificates of internal nodes of $\hT(T)$. Since $\hT(T_1)$ is already a decorated tree, the new free leaf is never a certificate of nodes in $\hT(T_1)$. Due to the increase by $r$ of labels in the part from $\hT(T_1)$, leaves in the part from $\hT(T_1)$ other than $\ell$ cannot be the certificate of any node in the part from $\hT(T_2)$, and the internal node corresponding to $\ell'$ has $\ell$ as certificate. Since the depth of every node originated from $\hT(T_1)$ also increases by $r$ as the labels, the certificate of a node originated from a non-root node of $\hT(T_1)$ stays the same in $\hT(T)$. The only change of certificates is for nodes in $T_2$ whose certificate was $\ell'$, and it is clear that their certificate in $\hT(T)$ is now $\ell$. We thus easily check that $P^\dagger = Q^\dagger_{2,-} \mir(Q_1) u d^{m+1}$. We then conclude that $[P^\dagger, Q^\dagger] = \mir(\mathrm{I}(T))$.
  \end{itemize}

  With all cases checked, by induction on the size of $T$, we conclude that $\mir \circ \mathrm{I} = \mathrm{I} \circ \hT$, which implies $\mir = \mathrm{I} \circ \hT \circ \mathrm{I}^{-1}$.
\end{proof}

As a corollary of Theorem~\ref{thm:mir-equiv-hT}, Theorem~\ref{thm:duality-equiv-hT} and Theorem~\ref{thm:T-recursive-canonical}, we now know that, under the bijection described in \tambijref{} from NSP-maps and synchronized intervals, which is just $\mathrm{I} \circ \mathrm{T}$, the map duality is transferred to the involution of synchronized intervals induced by the classic isomorphism of the Tamari lattice $(\mathcal{D}_n, \preceq)$ to its order dual $(\mathcal{D}_n, \succeq)$. Furthermore, we have the following result that is similar to Theorem~\ref{thm:h-autodual} on the number of fixed points of $\mir$.

\begin{thm} \label{thm:fixed-points-mir}
  The number of fixed points of $\mir$ in $\mathcal{I}_n$, that is, the number of synchronized intervals of the form $[P, \mir(P)]$, is equal to the number of self-dual NSP-maps in $\mathcal{M}_n$.
\end{thm}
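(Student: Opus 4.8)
The plan is to reuse the two conjugation identities already established and observe that they compose into a single statement relating $\dual$ and $\mir$ directly, so that the claimed equality of cardinalities becomes a trivial count of fixed points under a bijection. Concretely, I set $\Psi = \mathrm{I} \circ \mathrm{T}$, which is precisely the bijection from NSP-maps to synchronized intervals of \tambijref{}. First I would record that $\mathrm{T}$ restricts to a bijection $\mathcal{M}_n \to \mathcal{T}_n$ (Proposition~\ref{prop:T-bij}) and that $\mathrm{I}$ restricts to a bijection $\mathcal{T}_n \to \mathcal{I}_n$ (implicit in Proposition~\ref{prop:sync-bij} together with the fact that $\phi_T$ and $\phi_I$ preserve the grading, since $\mathrm{I} = \phi_I^{-1} \circ \phi_T$), so that $\Psi$ is a grading-preserving bijection $\mathcal{M}_n \to \mathcal{I}_n$.

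Next I would chain the two conjugation results. Theorem~\ref{thm:duality-equiv-hT} gives $\mathrm{T} \circ \dual = \hT \circ \mathrm{T}$, and Theorem~\ref{thm:mir-equiv-hT} gives $\mir \circ \mathrm{I} = \mathrm{I} \circ \hT$. Composing these,
\[ \Psi \circ \dual = \mathrm{I} \circ \mathrm{T} \circ \dual = \mathrm{I} \circ \hT \circ \mathrm{T} = \mir \circ \mathrm{I} \circ \mathrm{T} = \mir \circ \Psi . \]
Hence for an NSP-map $M \in \mathcal{M}_n$ we have $\dual(M) = M$ if and only if $\mir(\Psi(M)) = \Psi(M)$, so $\Psi$ restricts to a bijection between the self-dual maps in $\mathcal{M}_n$ and the fixed points of $\mir$ in $\mathcal{I}_n$. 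Since a self-dual NSP-map is by definition a fixed point of $\dual$, this gives exactly the asserted equality of cardinalities, and the enumeration formula then follows from \cite{count-self-dual} as for Theorem~\ref{thm:h-autodual}.

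There is essentially no hard step here: the argument is a formal consequence of Theorems~\ref{thm:duality-equiv-hT} and~\ref{thm:mir-equiv-hT}. The only points requiring any care are the bookkeeping of the gradings (checking that $\mathcal{M}_n$, $\mathcal{T}_n$, $\mathcal{I}_n$ correspond under $\mathrm{T}$ and $\mathrm{I}$, so that the fixed-point counts are compared in the correct degree) and the elementary observation that conjugating an involution by a bijection matches fixed points one-to-one; I would state both explicitly but neither poses a genuine obstacle.
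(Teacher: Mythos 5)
Your proposal is correct and follows exactly the paper's intended argument: the paper states this theorem as a corollary of Theorems~\ref{thm:duality-equiv-hT} and~\ref{thm:mir-equiv-hT} (together with Theorem~\ref{thm:T-recursive-canonical}), observing that $\mathrm{I} \circ \mathrm{T}$ conjugates $\dual$ into $\mir$, so fixed points correspond bijectively. Your explicit attention to the grading and to the fixed-point correspondence under conjugation is exactly the right bookkeeping.
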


\section{Series decomposition and duality} \label{sec:series}

In this section, we will investigate the link between the series decomposition of NSP-maps and map duality.

The series decomposition of NSP-maps was first introduced by Tutte in \tutte{} and used in \cite{desc-tree, JS1998bijective}, which consists of looking at how \emph{deletion} of the root would change the map. We now describe an adapted version of the series decomposition. Let $M$ be an NSP-maps rooted at $e$ pointing from $v$ to $u$, and $M'$ the map $M$ with $e$ deleted. There might be several cut vertices in $M'$, and we can split $M'$ into non-separable components by cutting all these cut vertices. The components we get can be ordered, starting from the component containing $v$, and the last component is the one containing $u$. Each component is either an edge or an NSP-map. $M$ must fall into one of the following types, as illustrated in Figure~\ref{fig:series-cases}.

\begin{itemize}
\item \textbf{Type I}: $M'$ has only one component, and it is a single edge.
\item \textbf{Type II}: $M'$ has only one component, and it is an NSP-map.
\item \textbf{Type III}: $M'$ has at least two components, and the first is an edge.
\item \textbf{Type IV}: $M'$ has at least two components, and the first is an NSP-map.
\end{itemize}

\begin{figure}
  \centering
  \includegraphics[page=22, scale=0.8]{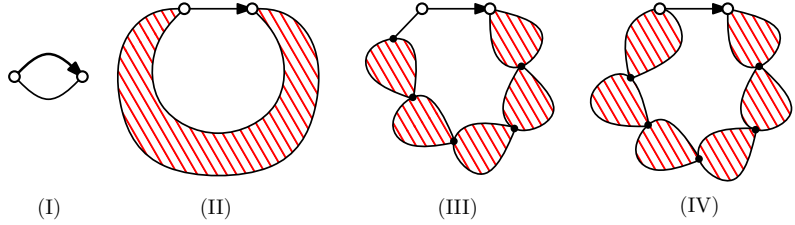}
  \caption{Four cases of the series decomposition of NSP-maps}
  \label{fig:series-cases}
\end{figure}

As in parallel decomposition, in series decomposition we consider the single edge as a non-separable component. We also introduce another statistic \tdef{$\fdeg$} of NSP-maps, which is the degree of the outer face minus 1, where the excluded 1 stands for the root. We have the following relation between the two statistics $\opname{deg}$ and $\fdeg$.

\begin{prop} \label{prop:deg-fdeg}
For any NSP-map $M$, we have $\fdeg(M) = \opname{deg}(\dual(M))$.
\end{prop}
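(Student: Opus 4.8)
The plan is to unfold both statistics and reduce the claim to the classical fact that planar duality exchanges the degree of a face with the degree of the corresponding dual vertex. First I would recall the two definitions in play: by definition $\opname{deg}(\dual(M))$ is the degree of the root vertex of $\dual(M)$ minus $1$, whereas $\fdeg(M)$ is the degree of the outer face of $M$ minus $1$. So it suffices to show that the root vertex of $\dual(M)$ has the same degree in $\dual(M)$ as the outer face $f_\infty$ of $M$ has in $M$.

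The second step is to identify the two objects being compared. By the definition of $\dual$ adopted here, the root vertex of $\dual(M)$ is precisely $v_{f_\infty}$, the dual vertex of the outer face $f_\infty$ of $M$. Hence the claim reduces to the equality between the degree of $v_{f_\infty}$ in $\dual(M)$ and the degree of $f_\infty$ in $M$.

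For the key step I would exhibit the bijection underlying this equality. From the construction of $\dual(M)$, a dual edge $e^*$ is incident to a dual vertex $v_f$ exactly when the primal edge $e$ lies on the boundary of $f$, and, reading off the rotation system at $v_f$, the incidences of $e^*$ at $v_f$ correspond one-for-one with the sides of $e$ touching $f$. For an NSP-map the outer face is bounded by a simple cycle (a $2$-connected planar map has every face bounded by a cycle, and the double-edge base case is immediate), so each edge $e$ on the boundary of $f_\infty$ contributes exactly one incidence $e^*$ at $v_{f_\infty}$, and the assignment $e \mapsto e^*$ is a bijection between the edges bounding $f_\infty$ and the edges incident to $v_{f_\infty}$. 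This gives that the degree of $v_{f_\infty}$ in $\dual(M)$ equals the degree of $f_\infty$ in $M$, and subtracting $1$ from both sides yields $\opname{deg}(\dual(M)) = \fdeg(M)$.

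The only delicate point, and the main (mild) obstacle, is the bookkeeping in the degree count: in an arbitrary planar map a bridge is traversed twice on the boundary of its incident face and a loop bounds a face of degree one, so the correspondence $e \mapsto e^*$ must be phrased at the level of half-edges (edge-sides) rather than edges to get the multiplicities right. Since an NSP-map has neither bridges nor loops and the outer face is a simple cycle, this complication disappears, and I would either record this remark explicitly or carry out the bijection in terms of half-edges throughout.
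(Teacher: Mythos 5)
Your proposal is correct and follows essentially the same route as the paper, which simply notes that the root vertex of $\dual(M)$ is by definition the dual vertex of the outer face of $M$ and takes the equality of face-degree and dual-vertex-degree as standard. Your additional care about half-edge multiplicities for bridges and loops is sound but not needed beyond the remark you already make, since NSP-maps have neither.
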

\begin{proof}
It is because the root vertex of $\dual(M)$ is the dual vertex of the outer face of $M$.
\end{proof}

We will now define some operations on NSP-maps used in the series decomposition, illustrated in Figure~\ref{fig:series-fct}. Let $M$ be an NSP-map rooted at $e = (v \to u)$. We denote by $\PiS(M)$ the map obtained by deleting $e$ from $M$ and appointing the edge on the new outer face that points to $u$ in clockwise order as the root. For any $1 \leq i \leq \fdeg(M)$, let $u_i$ be the $i$-th vertex on the outer face in clockwise order, with $u_0 = u$. We denote by $\DeltaS(M, i)$ the map obtained by adding an edge $e'$ in $M$ from $u_i$ to $u$ such that the new outer face is adjacent to all $u_j$ with $j \leq i$, and the new root is $e'$ pointing from $u_i$ to $u$. The following proposition is clear.

\begin{prop} \label{prop:pi-delta-s}
Let $M$ be an NSP-map, $i$ an integer between $1$ and $\fdeg(M)$ and $M' = \DeltaS(M,i)$. We have $\fdeg(M) = i$ and $\PiS(M') = M$, and $M'$ is also an NSP-map. Furthermore, for any NSP-map $N$ such that $\PiS(N) = M$, there is a unique integer $j$ between $1$ and $\fdeg(M)$ such that $N = \DeltaS(M,j)$.
\end{prop}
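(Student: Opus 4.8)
The plan is to mirror the proof of Proposition~\ref{prop:pi-delta-m}: the series operations $\PiS,\DeltaS$ are to \emph{deletion} of the root exactly what $\PiM,\DeltaM$ are to \emph{contraction}, so the same bookkeeping should go through. (Note that the equality displayed as ``$\fdeg(M)=i$'' should read $\fdeg(M')=i$; I would prove it in that form.) First I would unwind the definitions for the easy identities. By construction $\DeltaS(M,i)$ inserts one new edge $e'$ inside the outer face of $M$ running from $u_i$ back to $u=u_0$, so that the new outer face is bounded by $e'$ together with the arc of the old outer boundary through $u_0,\dots,u_i$; hence the new outer face has degree $i+1$ and $\fdeg(M')=i$. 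Deleting $e'$ and rerooting as prescribed by $\PiS$ clearly recovers $M$, so $\PiS(\DeltaS(M,i))=M$.

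Next I would check that $M'=\DeltaS(M,i)$ is again an NSP-map, arguing as in Proposition~\ref{prop:pi-delta-m}. Let $S,T$ be a partition of the edges of $M'$; the new edge $e'$ lies in one of them, say $S$, and then $S\setminus\{e'\},T$ is a partition of the edges of $M$, which by non-separability has at least two vertices incident to edges of both parts. Adding $e'$ back only creates new incidences at $u$ and $u_i$: a shared vertex of $M$ distinct from $u,u_i$ stays shared in $M'$, and if a shared vertex of $M$ is $u$ or $u_i$ it remains incident to edges of $T$ and gains the $S$-edge $e'$, hence stays shared. So $M'$ has at least two shared vertices for every partition, and it has at least two edges; thus $M'\in\mathcal{M}$. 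For the uniqueness clause, suppose $N\in\mathcal M$ with $\PiS(N)=M$. By definition of $\PiS$, deleting the root of $N$ gives $M$; so $N$ is $M$ with a single extra edge $e'$ added, whose head is $u$, drawn inside the outer face of $M$ (the outer face being preserved by $\PiS$). By planarity and the fact that the outer boundary of $M$ is fixed, the only freedom is the choice of the tail of $e'$ among the outer-boundary vertices $u_1,\dots,u_{\fdeg(M)}$, and that choice of index $j$ yields precisely $N=\DeltaS(M,j)$, uniquely.

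I expect the main obstacle to be exactly the two spots that are delicate in Proposition~\ref{prop:pi-delta-m}: the ``at least two shared vertices'' case analysis, and verifying in the uniqueness step that every $N$ with $\PiS(N)=M$ genuinely arises from an edge inside the outer face with head $u$ (and not, say, some other edge addition creating an unforeseen multi-edge). Both are routine given the template already in the paper. An attractive alternative I would also consider is to bypass the direct argument via duality: using Proposition~\ref{prop:deg-fdeg} together with the identities $\PiS=\dual\circ\PiM\circ\dual$ and $\DeltaS(\cdot,i)=\dual\circ\DeltaM(\cdot,i)\circ\dual$ (which amount to ``deleting an edge is contracting its dual'' and ``adding an edge across the outer face is splitting its dual vertex''), the whole proposition follows by conjugating Proposition~\ref{prop:pi-delta-m} with the involution $\dual$ on $\mathcal M$ from Proposition~\ref{prop:non-sep-self-dual}. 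In that case the real work shifts to checking those two identities at the level of the root edge, its incident outer face, and the rerooting conventions — which is precisely the content of the duality analysis this section is building toward.
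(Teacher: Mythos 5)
Your proof is correct and matches the paper's intent: the paper states this proposition without proof (``the following proposition is clear''), and your direct argument is precisely the expected transcription of the proof of Proposition~\ref{prop:pi-delta-m} from contraction to deletion, including the correct reading $\fdeg(M')=i$ of the typo in the statement. The only nitpick --- one the paper's own proof of Proposition~\ref{prop:pi-delta-m} shares --- is the degenerate partition $S=\{e'\}$, for which $S\setminus\{e'\}$ is empty and one should instead observe directly that both endpoints of $e'$ are incident to edges of both parts.
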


Let $M_1, M_2$ be two NSP-maps, and $i$ an integer between $1$ and $\fdeg(M_1)$. Let $e=(v \to u)$ be the root of $M_1$, and $u_i$ the $i$-th vertex on the outer face in clockwise order, with $u_0 = u$. We denote by $\oplusS(M_1, i, M_2)$ the map obtained by first splitting the root vertex $v'$ of $M_2$ into $v'_1$ attached only to the root and $v'_2$ attached to other adjacent edges, then identifying $v'_1$ with $u_i$ and $v'_2$ with $u$. Inversely, given an NSP-map $M$ of type IV in the series decomposition, we define $\hdS(M)$ to be the first component of $M$, rooted at its first edge in clockwise order adjacent to the core face and pointing in the direction such that the core face is on the left, and $\tlS(M)$ the map that remains after removing $\hdS(M)$ from $M$ and identifying the root vertex of $M$ and the cut vertex associated to $\hdS(M)$. Both $\hdS(M)$ and $\tlS(M)$ are NSP-maps, and the following properties are clear from construction.

\begin{figure}
  \centering
  \includegraphics[page=23,width=\textwidth]{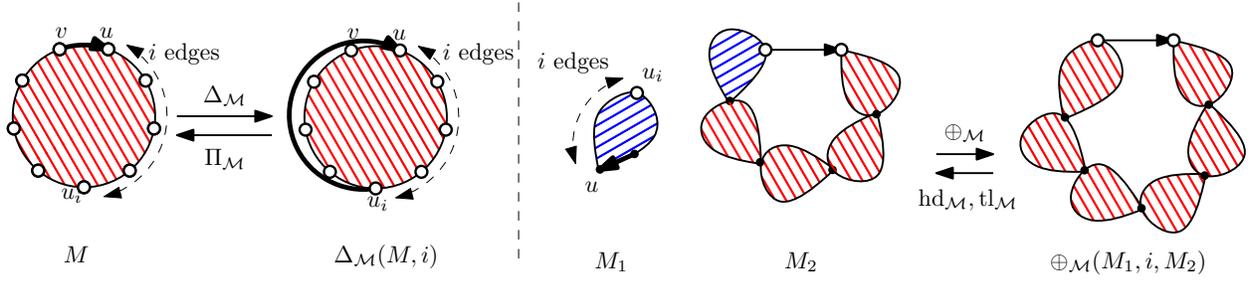}
  \caption{Functions related to the series decomposition of NSP-maps}
  \label{fig:series-fct}
\end{figure}

\begin{prop} \label{prop:oplus-hd-tl-s}
For two NSP-maps $M_1, M_2$ and an integer $i$ between $1$ and $\fdeg(M_1)$, the map $M = \oplusS(M_1, i, M_2)$ is also an NSP-map, and we have $\hdM(M) = M_1$, $\tlM(M) = M_2$, and $\fdeg(M) = i + \fdeg(M_2)$.

Furthermore, for any NSP-map $M'$ of type IV in the series decomposition, if $\hdM(M') = M_1$ and $\tlM(M') = M_2$, then there must exist an integer $j$ between $1$ and $\fdeg(M_1)$ such that $M' = \oplusM(M_1, j, M_2)$.
\end{prop}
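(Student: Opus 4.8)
The plan is to prove Proposition~\ref{prop:oplus-hd-tl-s} by following the proof of Proposition~\ref{prop:oplus-hd-tl-m} almost verbatim, the point being that the operations $\oplusS,\hdS,\tlS$ were tailored to mirror $\oplusM,\hdM,\tlM$. As in the parallel case there are two parts: the \emph{forward} part, that $M=\oplusS(M_1,i,M_2)$ is an NSP-map with $\hdS(M)=M_1$, $\tlS(M)=M_2$, $\fdeg(M)=i+\fdeg(M_2)$; and the \emph{backward} part, that a type IV map (series sense) with prescribed head and tail is uniquely an $\oplusS$. As a guide and a consistency check I would keep in mind the theme of Section~\ref{sec:series}: the series decomposition deletes the root while the parallel one contracts it, and deletion and contraction are exchanged by $\dual$ (the root of $M$ being dual to that of $\dual(M)$), so one expects $\oplusS(M_1,i,M_2)=\dual\bigl(\oplusM(\dual(M_1),i,\dual(M_2))\bigr)$ and $\hdS=\dual\circ\hdM\circ\dual$, $\tlS=\dual\circ\tlM\circ\dual$. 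Granting these identities, the whole proposition is immediate from Proposition~\ref{prop:oplus-hd-tl-m} applied to $\dual(M_1),\dual(M_2)$, together with Proposition~\ref{prop:non-sep-self-dual} and Proposition~\ref{prop:deg-fdeg}; I regard this as the short route, and the direct argument below as the elementary one matching the paper's ``clear from construction''.

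\textbf{Forward part.} Non-separability is argued exactly as for $\oplusM$. Every edge of $M=\oplusS(M_1,i,M_2)$ comes either from $M_1$ (set $E_1$) or from $M_2$ (set $E_2$), and the only vertices incident to edges of both sets are the two glue vertices: the image of $u_i$, which carries the root edge of $M_2$ and (since $u_i$ lies on the outer boundary of $M_1$) at least one edge of $M_1$; and the image of $u=u_0$, which carries at least one edge of $M_1$ and at least one edge of $M_2$ (because the root vertex of an NSP-map has degree $\ge 2$, so $v'_2$ keeps an edge). For any edge bipartition $S\sqcup T$ of $M$: if both $S$ and $T$ meet $E_1$, non-separability of $M_1$ gives $\ge 2$ vertices incident to both $S\cap E_1$ and $T\cap E_1$, a fortiori to $S$ and $T$; symmetrically if both meet $E_2$; and if $\{S,T\}=\{E_1,E_2\}$, the two glue vertices are incident to both classes. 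Hence $M$ is an NSP-map. The equalities $\hdS(M)=M_1$ and $\tlS(M)=M_2$ are then read off the construction, as in Proposition~\ref{prop:oplus-hd-tl-m}: deleting the root of $M$ exposes the image of $u_0$ as a cut vertex whose first component (the one containing the root vertex of $M$, namely the image of $u_i$) is exactly $M_1$ with the rooting prescribed by $\hdS$, and re-identifying the root vertex of $M$ with that cut vertex restores $M_2$. Finally $\fdeg(M)=i+\fdeg(M_2)$ follows by tracing the boundary walk of the outer face: it follows the outer boundary of $M_2$, but where that walk passed through the split root vertex $v'$ of $M_2$ it now detours along the arc $u_0u_1\cdots u_i$ of the outer boundary of $M_1$, adding exactly $i$ to the face degree; this is the same bookkeeping that gives $\opname{deg}(\oplusM(M_1,i,M_2))=i+\opname{deg}(M_2)$, and it is consistent with $\fdeg(M)=\opname{deg}(\dual(M))$ from Proposition~\ref{prop:deg-fdeg}.

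\textbf{Backward part.} Let $M'$ be a type IV NSP-map in the series sense with $\hdS(M')=M_1$ and $\tlS(M')=M_2$. Delete the root of $M'$, cut all cut vertices, and let $w$ be the cut vertex separating the first component from the rest. By definition of $\tlS$, identifying the root vertex of $M'$ with $w$ yields $M_2$, so $w$ is the root vertex of $M_2$; and the first component, suitably rooted, is $\hdS(M')=M_1$. To rebuild $M'$ we must reattach a split copy of the root vertex of $M_2$ inside $M_1$, i.e.\ adjoin to $M_1$ an edge from some vertex $u_j$ on its outer boundary to $u_0$ that bounds a new outer face of face-degree $j$; in other words the head-component-with-attaching-data is $\DeltaS(M_1,j)$. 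Planarity fixes the outer face of $M'$, so the only freedom is the choice of $u_j$, i.e.\ of $j\in\{1,\dots,\fdeg(M_1)\}$, and by Proposition~\ref{prop:pi-delta-s} such $j$ is unique and produces precisely $M'=\oplusS(M_1,j,M_2)$.

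\textbf{Main obstacle.} Nothing here is conceptually hard; the delicate points are bookkeeping: checking that the rooting conventions built into $\hdS$, $\tlS$, $\DeltaS$ coincide on the nose with those produced by the $\oplusS$-gluing (so that $\hdS(\oplusS(M_1,i,M_2))$ is literally $M_1$, not merely a re-rooting of it), and that the outer-face count comes out as $i+\fdeg(M_2)$ with no off-by-one. I would pin these down against Figures~\ref{fig:series-fct} and \ref{fig:series-cases}, and cross-check them through the duality identities $\hdS=\dual\circ\hdM\circ\dual$, $\tlS=\dual\circ\tlM\circ\dual$, $\oplusS(M_1,i,M_2)=\dual\bigl(\oplusM(\dual(M_1),i,\dual(M_2))\bigr)$, which reduce the statement to Proposition~\ref{prop:oplus-hd-tl-m} via Propositions~\ref{prop:non-sep-self-dual} and \ref{prop:deg-fdeg}.
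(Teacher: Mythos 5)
Your direct argument is correct and takes essentially the approach the paper intends: the paper states this proposition without proof (``clear from construction''), and your forward/backward verification is a faithful transposition of the paper's proof of Proposition~\ref{prop:oplus-hd-tl-m} to the series setting, with the $\fdeg$ bookkeeping and the reduction of uniqueness to Proposition~\ref{prop:pi-delta-s} both sound. One caution on the shortcut you offer as a cross-check: the identities $\oplusS(M_1,i,M_2)=\dual\bigl(\oplusM(\dual(M_1),i,\dual(M_2))\bigr)$ and $\hdS=\dual\circ\hdM\circ\dual$ are not quite right as written --- the paper's Theorem~\ref{thm:series-to-parallel} shows the correct conjugation involves $\dual\circ\NR$ rather than $\dual$ alone, because the rooting conventions force the re-rooting --- so the duality route would need that correction; since you rest the proof on the elementary argument, this does not affect your conclusion.
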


By Proposition~\ref{prop:pi-delta-s} and Proposition~\ref{prop:oplus-hd-tl-s}, we have the following series decomposition.

\begin{itemize}
\item \textbf{Type I}: This is the base case.
\item \textbf{Type II}: In this case, the function $\PiS$ applies, and all NSP-map in this case can be obtained bijectively by applying $\DeltaS$ to NSP-maps with all possible extra parameter.
\item \textbf{Type III}: Let $e_1$ be the edge of the last component. By contracting $e_1$ we get a smaller NSP-map. To obtain an NSP-map in this case, we take an arbitrary NSP-map $M'$ rooted at $e$ and add a vertex in the middle, and we have $\fdeg(M) = \fdeg(M') + 1$.
\item \textbf{Type IV}: In this case, the functions $\hdS$ and $\tlS$ apply, and all NSP-map in this case can be constructed by applying $\oplusS$ to NSP-maps with all possible extra parameter.
\end{itemize}

Similar to the parallel decomposition, the series decomposition also has the same recursive construction as that of $\beta$-(1,0) tree with coinciding size parameters. The statistics $\fdeg$ and $\rootB$ also coincide. Therefore, by identifying all four cases, we can also define the canonical map $\phi_S$ from the set of NSP-map $\mathcal{M}$ to the set of $\beta$-(1,0) trees $\mathcal{B}$.

We now introduce another function $\NR$ on NSP-maps. Let $M$ be an NSP-map rooted at $e$. By re-rooting $M$ to the edge next to $e$ in clockwise order without changing the outer face, we get another NSP-map, denoted by $\NR(M)$. We have $\fdeg(\NR(M)) = \fdeg(M)$. We can now announce the following theorem that relates the series decomposition to the parallel decomposition.

\begin{figure}
  \centering
  \includegraphics[scale=0.85,page=24]{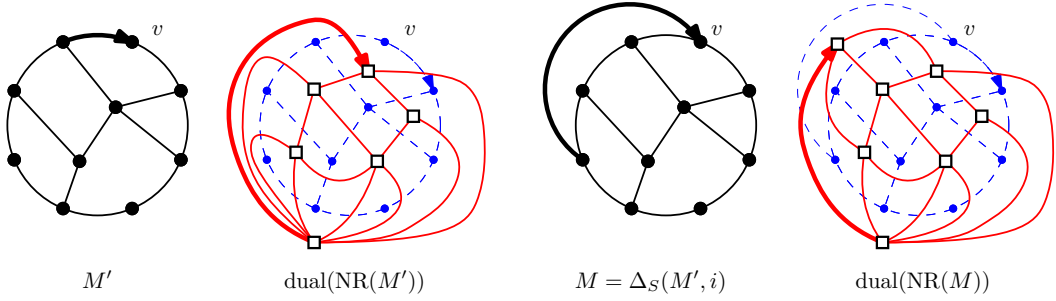}
  \caption{Example of the Type II case of Theorem~\ref{thm:series-to-parallel}}
  \label{fig:s-to-p-II}
\end{figure}

\begin{figure}
  \centering
  \includegraphics[page=25,scale=0.85]{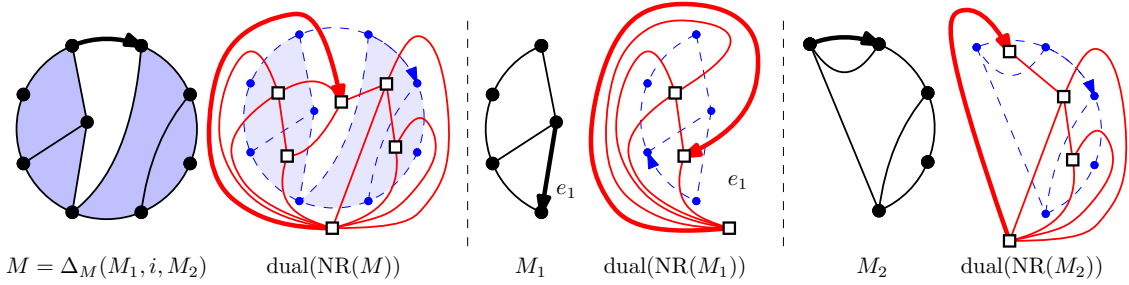}
  \caption{Example of the Type IV case of Theorem~\ref{thm:series-to-parallel}}
  \label{fig:s-to-p-IV}
\end{figure}

\begin{thm} \label{thm:series-to-parallel}
We have $\phi_S = \phi_M \circ \dual \circ \NR$, where $\phi_S$ (\emph{resp.} $\phi_M$) is the canonical bijection from NSP-maps to $\beta$-(1,0) trees with respect to the series (\emph{resp.} parallel) decomposition.
\end{thm}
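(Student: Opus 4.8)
The plan is to prove the identity by induction on the number of edges of $M$, showing that $M \mapsto \dual(\NR(M))$ is an isomorphism from the series-recursive structure on NSP-maps to the parallel-recursive structure, in the same spirit as Theorems~\ref{thm:T-recursive-canonical} and~\ref{I-recursive-canonical}. Writing $N = \dual(\NR(M))$, I would first record the bookkeeping of sizes and statistics: $\NR$ preserves $\fdeg$ and the number of edges, $\dual$ preserves the number of edges, and by Proposition~\ref{prop:deg-fdeg} we get $\opname{deg}(N) = \fdeg(\NR(M)) = \fdeg(M)$, so the series statistic $\fdeg$ is carried to the parallel statistic $\opname{deg}$, both of which correspond to $\rootB$ under the respective canonical bijections, and $N \in \mathcal{M}_n$ whenever $M \in \mathcal{M}_n$. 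It then suffices to check, for each of the four series types of $M$, that $N$ has the corresponding parallel type and that the relevant building operation commutes with $\dual \circ \NR$; the identity $\phi_S(M) = \phi_M(N)$ then follows by applying $\phi_S$ and $\phi_M$ to the two recursive decompositions and invoking the induction hypothesis.

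The heart of the argument is a dictionary between the series operations and the parallel operations under planar duality, with $\NR$ playing exactly the role of the adjustment that makes \emph{deleting} the root in the series decomposition of $M$ correspond to \emph{contracting} the root in the parallel decomposition of $N$ (duality swapping deletion and contraction, and $\NR$ fixing the position and orientation of the root so that this correspondence holds with the conventions of the paper). Under this correspondence, the ordered ``sausage'' of non-separable components obtained by cutting the cut-vertices of $M \setminus e$ is carried layer for layer to the ordered ``onion'' of non-separable components of $N / e'$, a single-edge component dualizing to a loop. Type by type: in Type~II the chord-insertion $\DeltaS$ in the outer face of $M'$ is dual to the operation $\DeltaM$ of splitting the root vertex and inserting the edge between its two halves (adding an edge inside a face is precisely the dual of splitting the dual vertex of that face); in Type~III, subdividing the root edge of $M'$ is dual to duplicating the root edge, which is exactly the Type~III move of the parallel decomposition; and in Type~IV, the series (path-like) composition $\oplusS$ of $M_1$ and $M_2$ is dual to the parallel (onion-like) composition $\oplusM$, the classical interchange of series and parallel composition under planar duality, with the distinguished parameter $i$ preserved because $\fdeg$ is carried to $\opname{deg}$.

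With this dictionary the induction runs as in the earlier canonicity theorems. For instance, in Type~II where $M = \DeltaS(M',i)$, the dictionary gives $\dual(\NR(M)) = \DeltaM(\dual(\NR(M')),i)$, hence
\[
\phi_S(M) = \DeltaB(\phi_S(M'),i) = \DeltaB(\phi_M(\dual(\NR(M'))),i) = \phi_M(\DeltaM(\dual(\NR(M')),i)) = \phi_M(\dual(\NR(M))),
\]
using the induction hypothesis in the second equality and the definition of $\phi_M$ in the fourth. Type~I is the base case, Type~III replaces $\DeltaB$ by the Type~III reconstruction of $\beta$-(1,0) trees, and Type~IV replaces it by $\oplusB$, again via the dictionary; Figures~\ref{fig:s-to-p-II} and~\ref{fig:s-to-p-IV} illustrate the two decomposable cases.

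The step I expect to be the main obstacle is making the geometric dictionary of the second paragraph genuinely precise: pinning down exactly how $\NR$ relocates and reorients the root, and verifying that after this adjustment the cut vertex created by contracting the root of $N$ is indeed dual to the outer-face structure used in the series decomposition of $M$, so that the onion layers of $N$ match, in order, the sausage links of $M$, and in particular that in Type~IV the parameter $i$ (which indexes the $i$-th vertex along the outer face in $\oplusS$) lands on the correct contact in $\oplusM$. This is routine planar-map bookkeeping, but it is where all the care is needed; once it is settled, the commutation of $\DeltaS$ and $\oplusS$ with $\dual \circ \NR$, and hence the whole induction, are immediate.
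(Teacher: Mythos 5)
Your proposal is correct and follows essentially the same route as the paper: a structural induction over the four series types of $M$, using the facts that planar duality exchanges deletion with contraction, subdivision with edge duplication, and series with parallel composition, and pinning down the parameter $i$ via $\opname{deg}(\dual(\NR(M))) = \fdeg(M)$ together with the uniqueness parts of Propositions~\ref{prop:pi-delta-m} and~\ref{prop:oplus-hd-tl-m}. The paper carries out the ``dictionary'' step you flag as the main obstacle by verifying the inverse operations ($\PiM$, $\hdM$, $\tlM$) on $\dual(\NR(M))$ rather than the forward constructions, but this is only a minor difference in bookkeeping.
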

\begin{proof}
We proceed by induction on NSP-maps with size as induction parameter. Let $M$ be an NSP-map, and we suppose that every map $M'$ with fewer edges than $M$ satisfies $\phi_S(M') = \phi_M(\dual(\NR(M')))$. We define $M_\to = \NR(M)$. Let $e$ be the root of $M$ and $e'$ the root of $M_\to$. Therefore, the root of $\dual(M_\to)$ is the dual edge of $e$. We now consider the possible type of $M$ under the \textbf{series} decomposition.
\begin{itemize}
\item \textbf{Type I}: This is the base case, and is easily verified.
\item \textbf{Type II}: In this case, we have $M = \DeltaS(M',i)$ with $M'$ an NSP-map and $i$ an integer such that $1 \leq i \leq \fdeg{M'}$. An example of this case is given in Figure~\ref{fig:s-to-p-II}. By induction hypothesis, we only need to prove that $\DeltaM(\dual(\NR(M')),i) = \dual(M_\to)$. We first observe that $\opname{deg}(\dual(M_\to)) = \fdeg(M) = i$. Therefore, by Proposition~\ref{prop:pi-delta-m}, we only need to prove that $\dual(\NR(M')) = \PiM(\dual(M_\to))$. Suppose that the root of $M'$ points from $u$ to $v$. We observe that the root vertices of both $\NR(M')$ and $M_\to$ inherit from $v$ in $M'$, so their dual share the same outer face. By applying $\PiM$, we contract the root of $\dual(M_\to)$, which is the dual of $e$. Since the contraction of an edge is the deletion of its dual in the dual map, and by deleting $e$ in $M_\to$ we get $\NR(M')$, which concludes this case.
\item \textbf{Type III}: In this case, $M$ is obtained by adding a vertex in the middle of the root of another NSP-map $M'$. By induction hypothesis, we only need to prove that $\phi_M(\dual(M_\to))$ is obtained by adding a leaf as the leftmost direct sub-tree to $\phi_M(\dual(\NR(M')))$. But this is clear since adding a vertex in the middle of an edge is duplicating the dual edge in the dual map.
\item \textbf{Type IV}: In this case, we have $M = \oplusS(M_1, i, M_2)$ for some NSP-maps $M_1, M_2$ and an integer $i$ with $1 \leq i \leq \fdeg{M_1}$. An example of this case is given in Figure~\ref{fig:s-to-p-IV}. By induction hypothesis, we only need to prove that $\oplusM(\dual(\NR(M_1)), i, \dual(\NR(M_2))) = \dual(M_\to)$. Since $\opname{deg}(\dual(M_\to)) = \fdeg(M) = i + \fdeg(M_2) = i + \opname{deg}(\dual(M_2))$, by Proposition~\ref{prop:oplus-hd-tl-m}, we only need to prove that $\hdM(\dual(M_\to)) = \dual(\NR(M_1))$ and $\tlM(\dual(M_\to)) = \dual(\NR(M_2))$. We first observe that the dual faces of cut vertices in $M_\to$ stand between non-separable components in $\dual(M_\to)$. Let $w$ be the cut vertex in $M$ whose dual face stands between the innermost component of $\dual(M_\to)$ and the other components, and $e_1$ be the root of $M_1$. The map $\dual(\NR(M_1))$ is thus rooted at the dual edge of $e_1$. By the construction of $\hdM$, the dual edge of $e_1$ is also the root of $\hdM(\dual(M_\to))$. Therefore, we have the first equality. The second follows by a similar verification of root.
\end{itemize}
We thus complete the induction.
\end{proof}

\section{Transfer of statistics} \label{sec:stats}

Using mechanisms set up previously, we can easily prove several results of statistic transfer between different structures. The following list consists of natural bijections that we have introduced.

\begin{center}
\begin{tabular}{cccc}
  $\mathcal{M} \to \mathcal{B}$: $\phi_M, \phi_S$ &
  $\mathcal{M} \to \mathcal{T}$: $\mathrm{T}$ &
  $\mathcal{T} \to \mathcal{B}$: $\phi_T$ &
  $\mathcal{T} \to \mathcal{I}$: $\mathrm{I}$
  \\
  $\mathcal{M} \to \mathcal{M}$: $\dual, \NR$ &
  $\mathcal{B} \to \mathcal{B}$: $h$ &
  $\mathcal{T} \to \mathcal{T}$: $\hT$ &
  $\mathcal{I} \to \mathcal{I}$: $\mir$
\end{tabular}
\end{center}

We now discuss some statistics of all classes $\mathcal{B}, \mathcal{T}, \mathcal{M}, \mathcal{I}$. We start by the tree classes $\mathcal{B}$ and $\mathcal{T}$, with examples given in Figure~\ref{fig:tree-stat}. The following statistics on $\mathcal{B}$ come from \kitaev{}, but also apply to $\mathcal{T}$. Let $T$ be a tree (either a $\beta$-(1,0) tree or a decorated tree), we denote by \tdef{$\opname{leaf}(T)$} the number of leaves in $T$, by \tdef{$\opname{int}(T)$} the number of internal nodes of $T$, by \tdef{$\opname{sub}(T)$} the number of direct children of the root of $T$, by \tdef{$\opname{rpath}(T)$} the length of the rightmost path of $T$, and by \tdef{$\opname{stem}(T)$} the length of the longest path from the root whose intermediate nodes have only one descendant.

\begin{figure}[bhtp]
  \centering
  \includegraphics[page=26,scale=0.8]{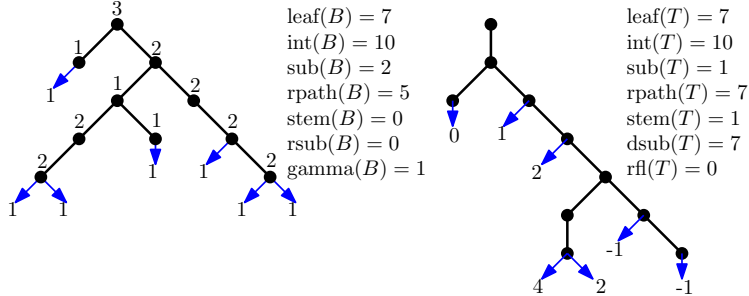}
  \caption{Examples of statistics of $\mathcal{B}$ and $\mathcal{T}$}
  \label{fig:tree-stat}
\end{figure}

We now define some statistics from \kitaev{} defined only on $\beta$-(1,0) trees. Let $B \in \mathcal{B}$ a $\beta$-(1,0) tree. We recall that $\rootB(B)$ is the label of the root, and we denote by \tdef{$\opname{rsub}(B)$} the number of nodes with a label $1$ on the rightmost path (including leaves). Now let $\ell_1, \ell_2, \ldots, \ell_k$ be the leaves of $B$ \emph{from right to left}. Starting from $\ell_1$, we decrease by one the labels of nodes on the path from $\ell_i$ to the root, until one of the internal nodes has a label $1$. The number of leaves proceeded is defined as \tdef{$\opname{gamma}(B)$}.

There are also corresponding statistics that only apply to decorated trees. Let $T \in \mathcal{T}$ a decorated tree. We recall that $\flT(T)$ is the number of free leaves in $T$. We denote by \tdef{$\opname{dsub}(T)$} the number of internal nodes with only one certificate, \textit{i.e.}, descendant leaf whose label is at most the depth of the internal node minus 2. We define \tdef{$\opname{rfl}(T)$} the maximal number of free leaves we can remove from right to left while keeping the decorated tree conditions without creating new leaves.

We now list some statistics on NSP-maps. Let $M \in \mathcal{M}$ be an NSP-map. We recall that $\opname{deg}(M)$ is the degree of the root vertex of $M$ \emph{minus 1}, and $\fdeg(M)$ is the degree of the outer face of $M$ minus $1$. We denote by \tdef{$\opname{face}(M)$} the number of faces of $M$, by \tdef{$\opname{vertex}(M)$} the number of vertices of $M$, by \tdef{$\opname{pcomp}(M)$} the number of components in the parallel decomposition of $M$, by \tdef{$\opname{scomp}(M)$} the number of components in the series decomposition of $M$. Furthermore, we define \tdef{$\opname{ejoin}(M)$} as the maximal number of successive root contraction we can do while leaving an NSP-map (the new root after contraction is the edge next to the original root in clockwise order). We also define \tdef{$\opname{ecut}(M)$} as the maximal number of successive deletion of edges adjacent to the root vertex we can perform while leaving an NSP-map, starting from the edge before the root in clockwise order, and turning in counterclockwise direction.

Finally, we list some statistics of synchronized intervals. Let $I=[P,Q] \in \mathcal{I}$ a synchronized interval. We recall that $\contI(I)$ is the number of non-initial contacts of the lower path $P$, and we denote by \tdef{$\contI^*(I)$} the number of non-initial contacts of the upper path $Q$. We denote by \tdef{$\opname{peak}(I)$} the number of peaks of $P$ (or $Q$, since $I$ is a synchronized interval, $P$ and $Q$ have the same number of peaks), and by \tdef{$\opname{dblu}(I)$} the number of up steps followed by an up step in $P$ (or $Q$, again by the fact that $I$ is synchronized). It is clear that the $\opname{peak}(I)$ and $\opname{dblu}(I)$ are respectively the number of $N$ and $E$ in the type of $P$. We define \tdef{$\opname{desc}(I)$} as the length of the last descent of $Q$, and \tdef{$\opname{level}(I)$} the maximal number $k$ such that $Q$ can be written in the form of $u^k Q' d^k$ where $Q'$ is a Dyck path. The last definition is \tdef{$\opname{teeth}(I)$}, which is the number of repetitions of $ud$ that we can delete at the end of $P$ without making $P$ empty.

We now investigate how our bijections transfer statistics. Suppose that we have two combinatorial classes $\mathcal{A}$ and $\mathcal{C}$ with some size parameter, and we denote by $\mathcal{A}_n$ and $\mathcal{C}_n$ the subset of $\mathcal{A}$ and $\mathcal{C}$ of size $n$ respectively. Furthermore, we suppose that there is a bijection $f$ from $\mathcal{A}$ to $\mathcal{C}$ such that $f(\mathcal{A}_n) = \mathcal{C}_n$ for all $n \geq 0$. Let $s_{\mathcal{A}}$ (resp. $s_{\mathcal{C}}$) a statistic of $\mathcal{A}$ (resp. of $\mathcal{C}$), \textit{i.e.} a function on $\mathcal{A}$ (resp. $\mathcal{C}$). We say that the statistic $s_{\mathcal{A}}$ is \tdef{transferred} to $s_{\mathcal{C}}$ by $f$ if for every $a \in \mathcal{A}$, we have $s_{\mathcal{A}}(a) = s_{\mathcal{C}}(f(a))$. Statistics transferred by a bijection between two combinatorial classes illustrate some of the combinatorial structure preserved by the bijection.

By Theorem~\ref{thm:T-recursive-canonical}, Theorem~\ref{thm:duality-equiv-hT}, Proposition~\ref{prop:hT-equiv-h}, Theorem~\ref{thm:mir-equiv-hT} and Theorem~\ref{thm:series-to-parallel}, we only need to consider $\phi_T$, $\mathrm{T}$, $\dual$ and $\mathrm{NR}$ for statistic transfers. In the following, we will only consider $\phi_T$, $\mathrm{T}$, $\dual$.

\begin{prop} \label{prop:stat-phi-T}
The function $\phi_T$ transfers the list of statistics $[\flT, \opname{1 + dsub}, \opname{rfl}]$ in $\mathcal{T}$ to the statistics $[\rootB, \opname{rsub}, \opname{gamma}]$ in $\mathcal{B}$.
\end{prop}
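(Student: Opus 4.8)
The plan is to run a single induction on the recursive decomposition that $\phi_T$ realises, checking the four tree types of Figure~\ref{fig:tree-case} and handling the three transferred statistics in parallel. The pair $\flT\mapsto\rootB$ is already built into $\phi_T$ (the root of $\phi_T(T)$ carries label $\flT(T)$), so only $1+\opname{dsub}\mapsto\opname{rsub}$ and $\opname{rfl}\mapsto\opname{gamma}$ require work. Throughout I would use the non-recursive description $\phi_T=\varphi_T$: $T$ and $\phi_T(T)$ are the same plane tree, with each internal node of depth $p>0$ relabelled by the number of its descendant leaves of label at most $p-2$, the root by $\flT(T)$, and every leaf by $1$.

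For $1+\opname{dsub}\mapsto\opname{rsub}$ I would first restate both quantities on the common tree: $\opname{dsub}(T)$ counts internal nodes of $\phi_T(T)$ whose label equals $1$ (equivalently, which have a single ``certificate'' descendant leaf), while $\opname{rsub}(\phi_T(T))$ counts the label-$1$ nodes met on the rightmost path, the rightmost leaf among them. The inductive step then amounts to tracking how the rightmost path and its labels are rebuilt in each type: \textbf{Type~I} is the single edge; \textbf{Type~II}, $\phi_T(T)=\DeltaB(\phi_T(T'),i)$, stacks one new root of label $i$ onto the rightmost path of $\phi_T(T')$; \textbf{Type~III} splices a fresh internal node of label $1$ at the foot of the rightmost path; and \textbf{Type~IV}, $\phi_T(T)=\oplusB(\phi_T(T_1),i,\phi_T(T_2))$, prepends a root of label $i+\rootB(\phi_T(T_2))\ge 2$ to the rightmost path of $\phi_T(T_2)$ and hangs all of $\DeltaB(\phi_T(T_1),i)$ off it as the leftmost subtree, away from the rightmost path. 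In each case the recursive change of $\opname{rsub}$ must be matched against the corresponding change of $\opname{dsub}$, with the constant offset $+1$ traced to the rightmost leaf that is always present; the induction hypothesis (applied to $\phi_T(T')$, $\phi_T(T_2)$, and, for the grafted part, to $\DeltaB(\phi_T(T_1),i)$ as in the Type~II step) then closes the argument.

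For $\opname{rfl}\mapsto\opname{gamma}$ the cleanest route is to make both greedy procedures transparent through $\varphi_T$ and then, if desired, re-dress the conclusion as the same four-case induction. Deleting a free leaf $\ell$ of $T$ lowers by one the label of every internal node above $\ell$ in $\phi_T(T)$ --- a free leaf has label $-1$, hence is a qualifying descendant leaf of every non-root internal node it lies under, and it drops $\flT$ at the root --- and the deletion becomes illegal exactly when it would expose a new leaf or drive some non-root label from $1$ down to $0$, which is precisely the stopping condition of the $\opname{gamma}$ procedure on $\phi_T(T)$. Hence $\opname{rfl}(T)$ and $\opname{gamma}(\phi_T(T))$ count the same initial segment of the right-to-left list of free leaves, once one checks that up to the halting point the leaves scanned by $\opname{gamma}$ are the free ones; this last point follows from conditions~1 and~3 of decorated trees, which force non-free leaves to the left of the first free leaves encountered. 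Recasting this per type, \textbf{Type~II} and \textbf{Type~IV} are again the loaded cases, as those are where $\flT$ is reset ($\DeltaB$) and where the decrementing may pass from the tail $\phi_T(T_2)$ into the grafted $\DeltaB(\phi_T(T_1),i)$.

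I expect the main obstacle to be the \textbf{Type~IV} bookkeeping shared by both non-trivial pairs: one must confirm that grafting $\DeltaB(\phi_T(T_1),i)$ as the leftmost subtree contributes to $\opname{rsub}$ and to the $\opname{gamma}$ run exactly the amount predicted by the induction hypothesis for $\DeltaB(\phi_T(T_1),i)$ (itself reduced to $\DeltaT(T_1,i)$ via the Type~II analysis, where the extra free leaf and the new root of label $i$ produce the ``$+1$''), and that neither the rightmost path of $\phi_T(T_2)$ nor the greedy decrement inside $\phi_T(T_2)$ is perturbed by the graft; once these are in hand, the remaining verifications in all four types are routine.
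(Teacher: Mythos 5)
Your overall strategy --- work on the common tree shape via $\varphi_T$ and compare the two labellings --- is the paper's, and the pair $\flT \mapsto \rootB$ is indeed immediate from the recursive construction. However, there are two genuine gaps in the two pairs that actually require work.

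For $1+\opname{dsub} \mapsto \opname{rsub}$: you restate $\opname{dsub}(T)$ as the number of \emph{all} internal nodes of $\varphi_T(T)$ carrying label $1$. Under that reading the claimed transfer is false: take the decorated tree whose root has two internal children, each with a single (necessarily free) leaf child; then $\varphi_T(T)$ has two label-$1$ internal nodes while $\opname{rsub}(\varphi_T(T))=2$, so $1+\opname{dsub}=3\neq 2$. The statistic $\opname{dsub}$ must be counted along the rightmost path (this is how it is used in the proof of Proposition~\ref{prop:stat-dsub}), and once it is, the transfer is read off directly from the definition of $\varphi_T$ with no induction at all: a rightmost-path internal node of $\varphi_T(T)$ has label $1$ exactly when it has a single qualifying descendant leaf in $T$, and the extra ``$+1$'' is the rightmost leaf. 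The ``main obstacle'' you flag in Type~IV --- label-$1$ nodes inside the grafted $\DeltaB(\phi_T(T_1),i)$, which sits off the rightmost path --- is precisely where your version of the induction cannot close, and you leave it unresolved.

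For $\opname{rfl} \mapsto \opname{gamma}$: matching the two greedy procedures is the right idea and is what the paper does, but the crux --- that every leaf scanned by the $\opname{gamma}$ procedure before it halts is a free leaf of $T$ --- is not established by appealing to ``conditions~1 and~3 forcing non-free leaves to the left of the first free leaves encountered.'' That is not a valid structural statement about decorated trees (a non-free leaf of positive label may well sit to the right of a free leaf), and in any case it does not show that $\opname{gamma}$ halts no later than the first non-free leaf, which is the point. The needed argument is a contradiction: if $\ell_i$ is the first non-free scanned leaf, with label $p\ge 0$, then condition~3 applied to its depth-$p$ ancestor forces every descendant leaf of its depth-$(p+1)$ ancestor $u$ with label at most $p-1$ to lie among the already-scanned $\ell_1,\dots,\ell_{i-1}$; since the $\varphi_T$-label of $u$ counts exactly those leaves, that label has already been driven to $0$, so $\opname{gamma}$ must have stopped earlier. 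Without this step (or an equivalent one) your treatment of the third pair does not go through.
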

\begin{proof}
The first pair $(\flT \to \rootB)$ comes directly from the recursive definition. The second pair $(1 + \opname{dsub} \to \opname{rsub})$ comes directly from the definition of the bijection $\varphi_T$, and the rightmost leaf is not counted in $\opname{dsub}$. We now deal with the third pair $(\opname{rfl} \to \opname{gamma})$.

Let $T$ be a decorated tree and $B = \phi_T(T)$ the corresponding $\beta$-(1,0) tree. For simplicity, we will identify the tree structure of $T$ and $B$. Suppose that $\opname{gamma}(B) = k$, and let $\ell_1, \ldots, \ell_k$ be the first leaves  in clockwise order that are proceeded in the computation of $\opname{gamma}(B)$. We now show that all $\ell_i$ are free leaves in $T$. Suppose otherwise that $\ell_i$ is the first non-free leaf among $\ell_1, \ldots, \ell_k$, and let $p \geq 0$ be its label. Let $u$ be the ancestor of $\ell_i$ of depth $p+1$. By the last condition of decorated tree, $\ell_i$ is the last leaf in clockwise order with a label at most $p$ in the sub-tree induced by $u$. However, in the definition of $\varphi_T$, the label of $u$ in $B$ is contributed by leaves with labels at most $p-1$, which are among $\ell_1, \ldots, \ell_{i-1}$. Therefore, before processing $\ell_i$, the label of $u$ should already be decremented to $0$, which violates the definition of $\opname{gamma}$. Thus, all $\ell_i$ must be free, and by the definition of $\opname{gamma}$ and $\varphi_T$, we have $\opname{gamma}(B) = \opname{rfl}(T)$.
\end{proof}

\begin{prop} \label{prop:stat-T}
  The functions $\mathrm{T}$ and $\mathrm{I}$ transfers the following lists of statistics:
\begin{center}
\begin{tabular}{rrrrrrrr}
  In $\mathcal{M}$: \quad & $\opname{deg}$, & $\opname{face}$, & $\opname{vertex}$, & $\opname{pcomp}$, & $\opname{fdeg}$, & $\opname{ejoin}$, & $\opname{ecut}$; \\
  In $\mathcal{T}$: \quad & $\flT$, & $1 + \opname{leaf}$, & $1 + \opname{int}$, & $\opname{sub}$, & $\opname{rpath}$, & $\opname{stem}$, & $\opname{rfl}$; \\
  In $\mathcal{I}$: \quad & $\contI$, & $1 + \opname{peak}$, & $1 + \opname{dblu}$, & $\contI^*$, & $\opname{desc}$, & $\opname{level}$, & $\opname{teeth}$.
\end{tabular}
\end{center}
More precisely, the function $\mathrm{T}$ transfers the first list of statistics in $\mathcal{M}$ to the second one in $\mathcal{T}$, while the function $\mathrm{I}$ transfers the second list of statistics in $\mathcal{T}$ to the third one in $\mathcal{I}$.
\end{prop}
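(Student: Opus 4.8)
The plan is to rely on Theorem~\ref{thm:T-recursive-canonical} and Theorem~\ref{I-recursive-canonical}: they say that $\mathrm{T}$ and $\mathrm{I}$ are the canonical bijections for the four-type decompositions, so they commute with the base case, with $\DeltaM/\DeltaT/\DeltaI$ (Type~II), with the Type~III operation, and with $\oplusM/\oplusT/\oplusI$ (Type~IV), keeping the integer parameter $i$ fixed. Hence a statistic is transferred by $\mathrm{T}$ (resp.\ $\mathrm{I}$) as soon as it obeys, on both classes, one and the same recursion expressed purely through the four-type decomposition, and the transfer then follows by a simultaneous induction on size. The first column is exactly the recursion underlying the canonical bijections ($\opname{deg}=\rootB\circ\phi_M$, $\flT=\rootB\circ\phi_T$, $\contI=\rootB\circ\phi_I$), so nothing is needed there. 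The remaining six statistics split into ``additive'' and ``rightmost'' ones.

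For the additive columns I would write down the common recursion and check it case by case, $s$ denoting the common value and $s_1,s_2$ its values on the two inputs of the Type~IV operation. For $\opname{face}$ / $1+\opname{leaf}$ / $1+\opname{peak}$: $s=2$ on the base object, $s$ unchanged under Type~II, $s\mapsto s+1$ under Type~III, $s\mapsto s_1+s_2-1$ under Type~IV. For $\opname{vertex}$ / $1+\opname{int}$ / $1+\opname{dblu}$: $s=2$ on the base object, $s\mapsto s+1$ under Type~II, $s$ unchanged under Type~III, $s\mapsto s_1+s_2-1$ under Type~IV. For $\opname{pcomp}$ / $\opname{sub}$ / $\contI^*$: $s=1$ on the base object and on every Type~II object, $s\mapsto s+1$ under Type~III, $s\mapsto 1+s_2$ under Type~IV. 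On the tree and interval sides each line is immediate bookkeeping from the descriptions of $\DeltaT,\oplusT$ and $\DeltaI,\oplusI$ (counting leaves, internal nodes and children; counting peaks, ascents $uu$ and non-initial contacts of the upper path, using that a Type~II interval has upper path $uQd$ and that prepending a $u$ and appending a $d$ to a Dyck path affects these quantities in the obvious way). On the map side the Type~II and Type~III cases come from the effect on $(V,E,F)$ of contracting the root, resp.\ duplicating it, and the Type~IV cases from the Euler relation together with the fact that $\oplusM(M_1,i,M_2)$ has $V_1+V_2-1$ vertices, $E_1+E_2$ edges and $F_1+F_2-1$ faces, while $\opname{pcomp}$ reads off the onion structure.

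The ``rightmost'' columns need a little more care. For $\opname{fdeg}$ / $\opname{rpath}$ / $\opname{desc}$ the common recursion is: value $1$ on the base object, $+1$ under Type~II, unchanged under Type~III, equal to $s_2$ under Type~IV; on the tree side this tracks the rightmost path under $\DeltaT$ and $\oplusT$, on the interval side the last descent of the upper path, and on the map side it is a direct verification (alternatively $\opname{fdeg}=\opname{deg}\circ\dual$ by Proposition~\ref{prop:deg-fdeg}, combined with Theorem~\ref{thm:duality-equiv-hT} and the first column). For $\opname{ejoin}$ / $\opname{stem}$ / $\opname{level}$, the key observation is that each equals the length of the longest chain of Type~II peelings applicable from the top of the object, i.e.\ it is $0$ unless the object is of Type~II, in which case it is $1$ plus its value on $\PiM(M)$ (resp.\ $\PiT(T)$, $\PiI(I)$): for $\opname{level}$ this is the identity ``$Q=uQ_1d$ implies $\opname{level}=1+\opname{level}(Q_1)$''; for $\opname{ejoin}$ it is its definition together with the fact that $\PiM$ returns an NSP-map exactly on Type~II maps (Proposition~\ref{prop:pi-delta-m}); for $\opname{stem}$ one checks that the forced descent from the root lengthens by one under $\DeltaT$ and cannot leave the root in Types~I, III, IV. Finally, $\opname{ecut}$ / $\opname{rfl}$ / $\opname{teeth}$ I would handle through the direct bijections rather than the decomposition: in the construction of $\mathrm{I}$, a trailing $ud$ block of $\mathrm{P}(T)$ comes from a free leaf $\ell$ with $c(\ell)=0$ visited at the very end of the traversal (a rightmost free leaf of $T$), and deleting such a block keeps $\mathrm{P}(T)$ a nonempty Dyck path of the correct type precisely when deleting $\ell$ keeps a decorated tree without turning its parent into a leaf, whence $\opname{teeth}(\mathrm{I}(T))=\opname{rfl}(T)$; dually, the depth-first search defining $\mathrm{T}$ shows that the rightmost free leaves of $\mathrm{T}(M)$ are produced by the edges incident to the root vertex of $M$ other than the one preceding the root, in the counterclockwise order used to define $\opname{ecut}$, and deleting these edges from the outside in leaves an NSP-map exactly as long as removing the corresponding leaves leaves a decorated tree, whence $\opname{ecut}(M)=\opname{rfl}(\mathrm{T}(M))$.

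The main obstacle is precisely this last triple $\opname{ecut}$ / $\opname{rfl}$ / $\opname{teeth}$ (and, to a lesser extent, $\opname{ejoin}$ / $\opname{stem}$ / $\opname{level}$): having no local description in the four-type decomposition, they escape the recursion-and-induction scheme and must be matched through the explicit bijections, in the spirit of Proposition~\ref{prop:stat-phi-T} and Lemma~\ref{lem:free-leaf-contact}. The delicate point is to pin down exactly which edges of $M$, and in which order, correspond to the rightmost free leaves of $\mathrm{T}(M)$, and to verify that the order in which the extremal deletion process on $M$ removes those edges matches the order in which $\opname{rfl}$ removes the matching leaves of $\mathrm{T}(M)$; once this dictionary is set up, the equality of the two extremal counts is a short induction, and everything else is routine verification.
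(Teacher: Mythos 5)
Your overall strategy is sound and, for the delicate part, coincides with the paper's. The paper's own proof is terser: it disposes of the first six columns by reading the statistics directly off the constructions (in the DFS defining $\mathrm{T}$, non-tree edges are leaves and visited vertices are internal nodes, so $\opname{face}=1+\opname{leaf}$ and $\opname{vertex}=1+\opname{int}$; for $\mathrm{I}$, the upper path $Q$ is the contour of the tree, so peaks, double ups, last descent, etc.\ are read off the tree shape), and it only argues explicitly for the last column ($\opname{ecut}\to\opname{rfl}$ via the edges deleted at the root vertex, and $\opname{rfl}\to\opname{teeth}$ via the trailing $ud$ blocks of $P$). Your systematic reduction to a common four-type recursion, justified by Theorem~\ref{thm:T-recursive-canonical} and Theorem~\ref{I-recursive-canonical}, is a legitimate and somewhat more structured alternative for the routine entries; and your identification of $\opname{ecut}/\opname{rfl}/\opname{teeth}$ as the statistics that escape the recursion and must be matched through the explicit bijections is exactly where the paper also spends its effort, with essentially the same argument.

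There is, however, one concrete slip in your ``immediate bookkeeping''. For the column $\opname{vertex}$ / $1+\opname{int}$ / $1+\opname{dblu}$ your claimed base value $s=2$ fails on the interval side: $1+\opname{dblu}([ud,ud])=1$, and in Type~IV the rule for $1+\opname{dblu}$ is $s_1+s_2$ rather than $s_1+s_2-1$ (the concatenation creates no new $uu$, but $\DeltaI$ does create one). This is not a quirk of one example: for any Dyck path of size $n$ one has $\opname{peak}+\opname{dblu}=n$, while a tree with $n$ edges has $\opname{leaf}+\opname{int}=n+1$ and a map with $n+1$ edges has $\opname{face}+\opname{vertex}=n+3$, so the two ``additive'' columns cannot both transfer with the shifts displayed in the table. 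Since $\opname{peak}(Q)=\opname{leaf}(T)$ is easily checked, the statistic matching $\opname{vertex}$ and $1+\opname{int}$ is $2+\opname{dblu}$ (equivalently, $\opname{int}=1+\opname{dblu}$). Your own verification scheme would have exposed this had the base case actually been computed, so you should either carry out that check and correct the shift, or note the discrepancy explicitly rather than asserting that the recursion holds. A similar base-case audit is advisable for the $\opname{ejoin}/\opname{stem}/\opname{level}$ column, where your rule ``$0$ unless of Type~II'' gives $0$ on the base object while $\opname{level}([ud,ud])=1$.
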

\begin{proof}
  For the first two lines, all pairs but the last one are clear from the definition of $\mathrm{T}$ and the parallel decomposition. For the last pair, let $M$ be an NSP-map and $T = \mathrm{T}(M)$, and we see that the free leaves we remove in the computation of $\opname{rfl}(T)$ corresponds to edges we remove from the root vertex when computing $\opname{ecut}(M)$.

  For the last two lines, every pair except the first and the last one can be seen from the fact that the upper path of a synchronized interval determines the tree structure of its decorated tree. The first pair can easily be seen from the definition of $\mathrm{I}$. For the last pair, we consider a synchronized interval $I = [P,Q]$ and its corresponding decorated tree $T$. Suppose that there are $k$ repetitions of $ud$ at the end of $P$. By the definition of $\mathrm{I}$, These repetitions correspond to the last $k$ leaves $\ell_1, \ldots, \ell_k$ in traversal order of $T$, which are free, and they must be preceded by a series of down steps of length at least two (or one if $P$ is of the form $(ud)^n$), which corresponds to a free leaf $\ell$ in $T$ that is the first child of a node $v$, which is the last explored node. Therefore, all $\ell_i$'s can be deleted while keeping $T$ a decorated tree. However, $\ell$ itself cannot be deleted, since $\ell$ is the first child of $v$. We thus have $\opname{teeth}(I) = \opname{rfl}(T)$.
\end{proof}

\begin{prop} \label{prop:stat-dsub}
For $M$ an NSP-map, $M' = \dual(M)$ and $T' = \mathrm{T}(M')$, we have $\opname{dsub}(T') + 1 = \opname{pcomp}(M)$.
\end{prop}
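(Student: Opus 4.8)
The plan is to reduce the identity, through the canonical bijections already available, to a self‑contained recursive statement about decorated trees and the involution $\hT$, and then to prove that statement by structural induction along the four‑type decomposition.

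First I would rephrase everything in terms of $\hT$. By Theorem~\ref{thm:duality-equiv-hT} we have $T' = \mathrm{T}(\dual(M)) = \hT(\mathrm{T}(M))$, and by Proposition~\ref{prop:stat-T} the statistic $\opname{pcomp}$ is transferred to root‑degree, i.e.\ $\opname{pcomp}(M) = \opname{sub}(\mathrm{T}(M))$. Writing $T = \mathrm{T}(M)$, the proposition becomes the purely combinatorial statement that $\opname{dsub}(\hT(T)) + 1 = \opname{sub}(T)$ for every decorated tree $T$, which I would prove by induction on the size of $T$. Two preliminary remarks are convenient. On one hand, the recursive definition of $\hT$ shows by an immediate induction that the rightmost leaf of $\hT(S)$ is a free leaf for every decorated tree $S$ (equivalently, via $\phi_T$ it corresponds to the rightmost leaf of a $\beta$-(1,0) tree, which always carries label $1$). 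On the other hand, through the description $\varphi_T$ of $\phi_T$ an internal node of depth $p>0$ gets label $1$ precisely when it has a single descendant leaf of label at most $p-2$, so that $\opname{dsub}(\hT(T))$ counts the non‑root internal nodes on the rightmost path of $\hT(T)$ having exactly one such ``deep'' descendant leaf.

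I would then run the induction over the type of $T$. Type~I is the base case, where the two sides are $0$ and $1$. In type~II, $T=\DeltaT(T',i)$ and $\opname{sub}(T)=1$, while $\hT(T)$ is $\hT(T')$ with a free leaf attached as the rightmost child of the $i$-th node of its rightmost path; every non‑root internal node of the new rightmost path is an ancestor of this free leaf and therefore acquires it as a deep descendant, so, already having at least one deep descendant by the second condition of decorated trees, it now has at least two and is not counted, giving $\opname{dsub}(\hT(T))=0$. In type~III, $T$ comes from a smaller $T_1$ by adding a free leaf as leftmost child of the root, so $\opname{sub}(T)=\opname{sub}(T_1)+1$, and $\hT(T)$ is $\hT(T_1)$ with a free leaf appended to its (free) rightmost leaf $\ell'$; the node $\ell'$ then becomes internal with a single, free and hence deep, descendant leaf and so is counted, contributing $1$, whereas for every other non‑root internal node on the rightmost path the free leaf $\ell'$ it used to see is merely replaced by another free leaf, leaving its count unchanged; thus $\opname{dsub}(\hT(T))=\opname{dsub}(\hT(T_1))+1$, and the inductive hypothesis for $T_1$ closes the case. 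In type~IV, $T=\oplusT(T_1,i,T_2)$ and $\opname{sub}(T)=1+\opname{sub}(T_2)$, while $\hT(T)$ is $\hT(T_2)$ with its (free) rightmost leaf $\ell$ replaced by $\hT(\DeltaT(T_1,i))$ after the labels of all of its leaves but the rightmost are increased by $\rpath(\hT(T_2))$; the rightmost path of $\hT(T)$ then runs down that of $\hT(T_2)$ to $\ell$ and continues down that of $\hT(\DeltaT(T_1,i))$. For a non‑root internal node above $\ell$, the single free leaf $\ell$ is, as far as deep descendants are concerned, replaced by exactly one deep leaf (the retained free leaf of the grafted copy), the other grafted leaves falling outside its deep range because of the shift, so its count is unchanged; the root of the grafted copy, now at positive depth, has exactly one deep descendant (that retained free leaf) and so becomes a newly counted node, contributing $1$; and the remaining non‑root internal nodes on the rightmost path, inherited from $\hT(\DeltaT(T_1,i))$, contribute $\opname{dsub}(\hT(\DeltaT(T_1,i)))=0$ by the inductive hypothesis applied to $\DeltaT(T_1,i)$, which has root‑degree $1$. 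Adding up, $\opname{dsub}(\hT(T))=\opname{dsub}(\hT(T_2))+1=\opname{sub}(T_2)=\opname{sub}(T)-1$, which completes the induction and hence the proposition.

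The hard part will be the type~IV case, where one must control how the label shift by $\rpath(\hT(T_2))$ and the substitution at $\ell$ interact with the deep‑descendant counts along the rightmost path. The three facts that unlock it are: the rightmost leaf of any $\hT$-image is free (so the substitution swaps one free leaf for another from the point of view of the nodes above $\ell$); the shift moves every grafted leaf except the rightmost one out of deep range of those nodes; and $\opname{dsub}(\hT(\DeltaT(T_1,i)))=0$ from the inductive hypothesis. If this turns out to be cumbersome in the decorated‑tree picture, the same bookkeeping can be transported to $\beta$-(1,0) trees via $\phi_T$ and Proposition~\ref{prop:hT-equiv-h}: there the recursive definition of $h$ replaces the label shift by additions of $1$ along the rightmost path, which makes the effect on the number of label‑$1$ vertices on the rightmost path essentially immediate in each type.
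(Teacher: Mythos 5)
Your proof is correct, but it takes a genuinely different route from the paper's. The paper argues directly on the map: it identifies the nodes counted by $\opname{dsub}(T')$ with the faces of $M$ that separate consecutive components of the parallel decomposition, via the observation that such a node $v$ (the rightmost child of a dual vertex $u_f$ on the rightmost path of $T'$) has the dual edge of the root of $M$ as its only deep descendant exactly when deleting $u_f$ turns that dual edge into a cut edge; there are $\opname{pcomp}(M)-1$ such faces. That is a one-paragraph argument, but it rests on a rather fine reading of the exploration $\mathrm{T}$ composed with duality. You instead push everything through the conjugation $\mathrm{T}\circ\dual=\hT\circ\mathrm{T}$ and the transfer $\opname{pcomp}\to\opname{sub}$, reducing to the purely tree-theoretic identity $\opname{dsub}(\hT(T))+1=\opname{sub}(T)$, proved by the four-type induction. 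I checked each case and the bookkeeping is sound; in particular the three facts you isolate for type IV (the rightmost leaf of any $\hT$-image is free; the shift by $\rpath(\hT(T_2))$ pushes every other grafted leaf out of deep range of the nodes above; and $\opname{dsub}(\hT(\DeltaT(T_1,i)))=0$ since $\opname{sub}(\DeltaT(T_1,i))=1$) do exactly the work needed. Your route is longer but mechanical and reuses the machinery already built; the paper's is shorter but is a fresh geometric observation.

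One caveat, which in fairness applies to the paper's proof as well: as literally defined, $\opname{dsub}(T)$ counts \emph{all} internal nodes with a unique deep descendant, with no restriction to the rightmost path, and under that reading the proposition fails. For instance, if $T'$ is the decorated tree whose root has two internal children, each carrying a single free leaf, then both depth-$1$ nodes have a unique deep descendant, so the literal $\opname{dsub}(T')+1$ equals $3$, while $\opname{pcomp}(M)=\opname{sub}(\hT(T'))=2$. Your sentence deducing from $\varphi_T$ that ``$\opname{dsub}(\hT(T))$ counts the non-root internal nodes \emph{on the rightmost path}'' silently inserts the rightmost-path restriction that makes both the proposition and the transfer $1+\opname{dsub}\to\opname{rsub}$ true; it does not follow from the description of $\varphi_T$, since one-deep-descendant nodes can occur off the rightmost path. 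State that restriction explicitly as the definition you are working with rather than presenting it as a consequence.
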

\begin{proof}
Let $f$ be a face in $M$ such that the dual vertex $u_f$ is on the rightmost path of $T'$, and we suppose that the rightmost child $v$ of $u_f$ is an internal node. Let $p$ be the depth of $v$. The vertex $v$ always has a descendant leaf corresponding to the dual edge of the root of $M$. By the exploration process of $\mathrm{T}$, the vertex $v$ has no other descendant leaf with a label at most $p-2$ if and only if by deleting $u_f$ in $\dual(M)$, the dual edge of the root will become a cut edge, which means that $f$ separates two components in the parallel decomposition of $M$. We conclude the proof by observing that there are $(\opname{pcomp}(M)-1)$ such faces.
\end{proof}

\begin{prop} \label{prop:stat-dual}
The involution $\dual$ exchanges the following pairs of statistics in $\mathcal{M}$: $(\opname{deg}, \opname{fdeg})$, $(\opname{faces}, \opname{vertices})$, $(\opname{ejoin}, \opname{ecut})$. 
\end{prop}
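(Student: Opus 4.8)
The plan is to dispatch the three pairs separately; the first two fall out of definitions and of a result already proved, while $(\opname{ejoin},\opname{ecut})$ is where all the work lies.

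For $(\opname{deg},\fdeg)$: Proposition~\ref{prop:deg-fdeg} states $\fdeg(M)=\opname{deg}(\dual(M))$ for every NSP-map $M$; replacing $M$ by $\dual(M)$ and using that $\dual$ is an involution gives $\fdeg(\dual(M))=\opname{deg}(M)$, so $\dual$ swaps $\opname{deg}$ and $\fdeg$. For $(\opname{face},\opname{vertex})$: by the very construction of the dual map, the vertices of $\dual(M)$ are the dual vertices of the faces of $M$ and the faces of $\dual(M)$ are the dual faces of the vertices of $M$, both correspondences being bijective; hence $\opname{vertex}(\dual(M))=\opname{face}(M)$ and $\opname{face}(\dual(M))=\opname{vertex}(M)$.

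For $(\opname{ejoin},\opname{ecut})$ it suffices to show $\opname{ejoin}(\dual(M))=\opname{ecut}(M)$, the other identity following by applying this to $\dual(M)$. First, $\opname{ejoin}(N)$ is by definition the length of the maximal prefix of type-II maps in the \emph{parallel} decomposition of $N$ (recall $\PiM(N)$ is non-separable exactly when $N$ is of type II), so $\opname{ejoin}(N)=\opname{stem}(\phi_M(N))$ since $\phi_M$ sends type-II maps to type-II trees; this is also the content of the $\opname{ejoin}/\opname{stem}$ entry of Proposition~\ref{prop:stat-T} together with the fact that $\phi_T$ preserves tree shape. By the identical argument applied to the \emph{series} decomposition, whose four-type recursive structure mirrors the parallel one with $\PiS$ in place of $\PiM$, the maximal number of successive applications of $\PiS$ to a map $N$ that keep it non-separable equals $\opname{stem}(\phi_S(N))$. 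Now Theorem~\ref{thm:series-to-parallel} gives $\phi_S=\phi_M\circ\dual\circ\NR$, hence $\phi_M(\dual(M))=\phi_S(\NR^{-1}(M))$, and therefore $\opname{ejoin}(\dual(M))=\opname{stem}(\phi_M(\dual(M)))=\opname{stem}(\phi_S(\NR^{-1}(M)))$ equals the maximal number of successive $\PiS$'s applicable to $\NR^{-1}(M)$ while staying non-separable.

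It remains to recognize this number as $\opname{ecut}(M)$. Write $e$ for the root of $M$ and $v$ for its root vertex. The map $\NR^{-1}(M)$ is $M$ re-rooted one step along the outer face opposite to the direction used by $\NR$, so its root is exactly ``the edge before the root of $M$ in clockwise order'' --- the edge around $v$ immediately counterclockwise of $e$, which is the first edge deleted in the definition of $\opname{ecut}(M)$. One then checks that $\PiS$ deletes this root and re-roots at the next edge around $v$ in the counterclockwise direction, so that iterating $\PiS$ on $\NR^{-1}(M)$ performs precisely the sequence of deletions of edges around $v$ prescribed by $\opname{ecut}$, with ``$\PiS$ yields a non-separable map'' matching ``the deletion yields an NSP-map''. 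Hence this maximal number of successive $\PiS$'s is $\opname{ecut}(M)$, and $\opname{ejoin}(\dual(M))=\opname{ecut}(M)$ follows. The main obstacle is exactly this last bookkeeping: one must track the re-rooting conventions (clockwise versus counterclockwise traversal of the outer face, and the precise rule by which $\PiS$ chooses its new root) carefully enough to verify that the root of $\NR^{-1}(M)$ is where $\opname{ecut}$ starts and that $\PiS$ steps counterclockwise around $v$; everything else is formal, the key external input being Theorem~\ref{thm:series-to-parallel}, which already packages the ``parallel versus series, related by duality'' phenomenon.
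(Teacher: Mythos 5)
Your handling of $(\opname{deg},\fdeg)$ and $(\opname{face},\opname{vertex})$ is exactly the paper's (both are immediate from Proposition~\ref{prop:deg-fdeg} and the definition of the dual). For $(\opname{ejoin},\opname{ecut})$ you take a genuinely different and much longer route: the paper disposes of it in one line by observing that contracting an edge of $M$ is the same as deleting its dual edge in $\dual(M)$, so the sequence of root contractions defining $\opname{ejoin}(M)$ is, edge by edge, the sequence of deletions defining $\opname{ecut}(\dual(M))$, with non-separability preserved at each stage by Proposition~\ref{prop:non-sep-self-dual}. Your detour through $\opname{stem}$, the canonical bijections and Theorem~\ref{thm:series-to-parallel} is sound and non-circular (both cited results are established independently earlier), and it has the merit of making the ``series versus parallel'' mechanism explicit; but note two things. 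First, the $\opname{stem}$ statistic is dispensable: all you actually use is that ``number of leading type-II steps'' is an invariant of the associated $\beta$-(1,0) tree, since the two decompositions are isomorphic to the tree decomposition, so you could skip Proposition~\ref{prop:stat-T} entirely. Second, the detour does not save you the delicate part: your final paragraph, identifying the $\PiS$-iteration on $\NR^{-1}(M)$ with the $\opname{ecut}$ process on $M$ (checking that the root of $\NR^{-1}(M)$ is the edge where $\opname{ecut}$ starts, that its head is the root vertex $v$ of $M$ so that all subsequently deleted edges stay incident to $v$, and that $\PiS$ steps counterclockwise around $v$), is exactly the same orientation bookkeeping one must do to dualize $\opname{ejoin}$ directly — and Theorem~\ref{thm:series-to-parallel} is itself proved from the same contraction/deletion fact, so nothing is gained in generality. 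Your argument is acceptable at the paper's own level of detail, but the direct dualization is both shorter and conceptually cleaner.
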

\begin{proof}
The first two pairs are consequence of map duality. For the third one, it comes from the fact that contracting an edge in a planar map is equivalent to deleting the dual edge in its dual.
\end{proof}

We thus have the following result in \dnsref{} on statistic transfers by the involution $h$.
\begin{coro}[Theorem~10 in \dnsref{}]
The involution $h$ exchanges the following pairs of statistics in $\mathcal{B}$: $(\opname{leaves}, \opname{int})$, $(\opname{root}, \opname{rpath})$, $(\opname{gamma}, \opname{stem})$ and $(\opname{sub}, \opname{rsub})$.
\end{coro}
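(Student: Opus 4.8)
The plan is to first promote the chain of equivalences of Section~\ref{sec:inv} to the single identity $h = \phi_M \circ \dual \circ \phi_M^{-1}$, and then to read each of the four pairs off the statistic-transfer results already in hand. Combining $h = \phi_T \circ \hT \circ \phi_T^{-1}$ (Proposition~\ref{prop:hT-equiv-h}), $\hT = \mathrm{T} \circ \dual \circ \mathrm{T}^{-1}$ (Theorem~\ref{thm:duality-equiv-hT}) and $\phi_M = \phi_T \circ \mathrm{T}$ (Theorem~\ref{thm:T-recursive-canonical}) gives exactly $h = \phi_M \circ \dual \circ \phi_M^{-1}$. So for $B \in \mathcal{B}$, I would set $M = \phi_M^{-1}(B)$ and $M^* = \dual(M)$; then $h(B) = \phi_M(M^*)$, so every statistic of $h(B)$ is a statistic of $\phi_M$ evaluated at the dual map $M^*$, which by Proposition~\ref{prop:stat-dual} is governed by the corresponding ``opposite'' statistic of $M$.

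Next I would note that $\phi_T$ does not change the underlying (unlabelled) plane tree, so the structural statistics $\opname{leaf}$, $\opname{int}$, $\opname{sub}$, $\opname{rpath}$, $\opname{stem}$ agree on $T$ and $\phi_T(T)$; combined with Proposition~\ref{prop:stat-T} (transfers along $\mathrm{T}$) and Proposition~\ref{prop:stat-phi-T} (transfers along $\phi_T$), this gives, for $B = \phi_M(M)$,
\[ \opname{leaf}(B) = \opname{face}(M)-1,\quad \opname{int}(B) = \opname{vertex}(M)-1,\quad \rootB(B) = \opname{deg}(M),\quad \opname{rpath}(B) = \fdeg(M), \]
and likewise $\opname{sub}(B) = \opname{pcomp}(M)$, $\opname{stem}(B) = \opname{ejoin}(M)$, $\opname{gamma}(B) = \opname{ecut}(M)$, and $\opname{rsub}(B) = 1 + \opname{dsub}(\mathrm{T}(M))$. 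Applying the first four identities at $h(B) = \phi_M(M^*)$ and invoking Proposition~\ref{prop:stat-dual}, which swaps $(\opname{deg},\fdeg)$, $(\opname{face},\opname{vertex})$ and $(\opname{ejoin},\opname{ecut})$ under $\dual$, immediately delivers the pairs $(\opname{leaf},\opname{int})$, $(\rootB,\opname{rpath})$ and $(\opname{gamma},\opname{stem})$; for instance $\opname{gamma}(h(B)) = \opname{ecut}(M^*) = \opname{ejoin}(M) = \opname{stem}(B)$, and symmetrically.

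The remaining pair $(\opname{sub},\opname{rsub})$ is the only point needing an extra input, and I expect it to be the main (mild) obstacle: here I would use Proposition~\ref{prop:stat-dsub}, which states $\opname{dsub}(\mathrm{T}(\dual(N)))+1 = \opname{pcomp}(N)$ for every NSP-map $N$. Taking $N = M$ gives $\opname{rsub}(h(B)) = 1 + \opname{dsub}(\mathrm{T}(M^*)) = \opname{pcomp}(M) = \opname{sub}(B)$; taking $N = M^*$ and using that $\dual$ is an involution (so $\mathrm{T}(\dual(M^*)) = \mathrm{T}(M)$) gives $\opname{sub}(h(B)) = \opname{pcomp}(M^*) = 1 + \opname{dsub}(\mathrm{T}(M)) = \opname{rsub}(B)$, settling the last pair. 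No real computation is involved beyond these substitutions; the only care needed is in tracking the ``$+1$'' offsets (between $\opname{face}/\opname{leaf}$, $\opname{vertex}/\opname{int}$ and $\opname{dsub}/\opname{rsub}$) and in legitimately using Proposition~\ref{prop:stat-dsub} in both directions via the involutivity of $\dual$.
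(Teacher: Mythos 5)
Your proposal is correct and follows essentially the same route as the paper: it assembles the conjugation identities of Section~\ref{sec:inv} into $h = \phi_M \circ \dual \circ \phi_M^{-1}$, derives the first three pairs from Proposition~\ref{prop:stat-phi-T}, Proposition~\ref{prop:stat-T} and Proposition~\ref{prop:stat-dual}, and the pair $(\opname{sub},\opname{rsub})$ from Proposition~\ref{prop:stat-dsub} applied to both $M$ and $\dual(M)$. This is exactly the paper's argument, merely written out with the intermediate substitutions made explicit.
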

\begin{proof}
All pairs are based on the conjugate relations between $\dual$, $h$ and $\hT$ in Theorem~\ref{thm:duality-equiv-hT} and Proposition~\ref{prop:hT-equiv-h}, and the statistic transfers between different classes of objects in Proposition~\ref{prop:stat-phi-T} and Proposition~\ref{prop:stat-T}. On these bases, the first three pairs comes from Proposition~\ref{prop:stat-dual}, and the last one comes from Proposition~\ref{prop:stat-dsub}.
\end{proof}

Similar results on statistics on Tamari intervals are omitted here, since they are well-known.

\section*{Acknowledgement}

The author thanks Mireille Bousquet-Mélou, Guillaume Chapuy, Sergey Kitaev and Louis-François Préville-Ratelle for their useful comments and inspiring discussions. The present work started under the hospitality of Université de Bordeaux, and finishes under the present postdoctoral position that the author occupies, financed by LIP of ENS de Lyon, ANR Grant STINT (ANR-13-BS02-0007) and Project-Team AriC of INRIA.

\bibliographystyle{alpha}
\bibliography{involution}

\end{document}